\numberwithin{equation}{section}
\newtheorem{theorem}{Theorem}[section]
\newtheorem{definition}[theorem]{Definition}
\newtheorem{lemma}[theorem]{Lemma}
\newtheorem{proposition}[theorem]{Proposition}
\newtheorem{corollary}[theorem]{Corollary}
\theoremstyle{remark}
\newtheorem{remark}[theorem]{Remark}
\theoremstyle{plain}
\providecommand{\loc}{{\ensuremath{\mathrm{loc}}}}
\newcommand{\W}{\mathcal{W}^{\alpha}_{\alpha_1,\alpha_2}}
\newcommand{\w}{\textbf{\textit{w}}}
\newcommand{\px}{{p(\cdot)}}
\newcommand{\qx}{{q(\cdot)}}
\newcommand{\ux}{{u(\cdot)}}
\newcommand{\sx}{{s(\cdot)}}
\newcommand{\B}{B^{\textbf{\textit{w}}}_{\px,\qx}}
\newcommand{\BN}{\mathcal{N}^{\textbf{\textit{w}}}_{\px,\ux,\qx}}
\newcommand{\FE}{\mathcal{E}^{\textbf{\textit{w}}}_{\px,\ux,\qx}}
\newcommand{\bns}{n^{\textbf{\textit{w}}}_{\px,\ux,\qx}}
\newcommand{\fes}{e^{\textbf{\textit{w}}}_{\px,\ux,\qx}}
\newcommand{\Nz}{\ensuremath{\mathbb{N}_0}}
\newcommand{\R}{\mathbb{R}}
\newcommand{\N}{\mathbb{N}}
\newcommand{\Z}{\mathbb{Z}}
\newcommand{\cS}{\mathcal{S}}
\newcommand{\cN}{\mathcal{N}}
\newcommand{\Rn}{{\mathbb{R}^n}}
\newcommand{\Zn}{{\mathbb{Z}^n}}
\DeclareMathOperator{\supp}{supp}
\def\esssup{\operatornamewithlimits{ess\,sup}}
\def\essinf{\operatornamewithlimits{ess\,inf}}
\newcommand{\PPlog}{\mathcal{P}^{\log}(\Rn)}
\newcommand{\PP}{\mathcal{P}(\Rn)}
\begin{document}

\title[Variable Besov-Morrey spaces]{Variable exponent Besov-Morrey spaces}

\author[A. Almeida]{Alexandre Almeida$^*$}
\address{Center for R\&D in Mathematics and Applications, Department of Mathematics, University of Aveiro, 3810-193 Aveiro, Portugal}
\email{jaralmeida@ua.pt}

\author[A. Caetano]{Ant\'{o}nio Caetano}
\address{Center for R\&D in Mathematics and Applications, Department of Mathematics, University of Aveiro, 3810-193 Aveiro, Portugal}
\email{acaetano@ua.pt}

\thanks{$^*$ Corresponding author.}
\thanks{The research was partially supported by the project ``\emph{Smoothness Morrey spaces with variable exponents}'' approved under the agreement `Projektbezogener Personenaustausch mit Portugal -- A{\c{c}}{\~o}es Integradas Luso-Alem{\~a}s' / DAAD-CRUP. It was also supported by FCT through CIDMA - Center for Research and Development in Mathematics and Applications, within project UID/MAT/04106/2019.}
\thanks{\copyright 2019. Licensed under the CC BY-NC-ND 4.0 license http://creativecommons.org/licenses/by-nc-nd/4.0/}

\date{\today}

\subjclass[2010]{46E35, 46E30, 42B25}

\keywords{Variable exponents, non-standard growth, mixed Morrey-sequence spaces, Besov-Morrey spaces, convolution inequalities, maximal characterization, atomic representation, molecular representation}

\begin{abstract}
In this paper we introduce Besov-Morrey spaces with all indices variable and study some fundamental properties.
This includes a description in terms of Peetre maximal functions and atomic and molecular decompositions.
This new scale of non-standard function spaces requires the introduction of variable exponent mixed Morrey-sequence spaces, which
in turn are defined within the framework of semimodular spaces. In particular, we obtain a convolution inequality involving special radial kernels,
which proves to be a key tool in this work.
\end{abstract}

\maketitle


\section{Introduction}\label{sec:intro}

In recent years there has been an increase of interest in studying smoothness spaces based on the idea of Morrey spaces. The so-called Besov-Morrey spaces $\mathcal{N}^s_{p,u,q}$ and the spaces $B^{s,\tau}_{p,q}$ of Besov-type are examples of that. Roughly speaking, these spaces differ from the classical Besov spaces $B^s_{p,q}$ by the introduction of a local control as on the Morrey scale $M_{p,u}$, though the way that control is implemented differs depending on the scale of spaces considered. The spaces $\mathcal{N}^s_{p,u,q}$ were introduced by Kozono and Yamazaki \cite{KY94} and developed later by Mazzucato \cite{Mazz03} in connection with the study of Navier-Stokes equations. We refer to the papers \cite{Ros13} and \cite{Saw09} and the surveys \cite{Sick12,Sick13} for various properties and historical remarks. The spaces $B^{s,\tau}_{p,q}$ seem to have been first introduced by El Baraka \cite{ElBar06,ElBar02}, but for a systematic study and further references see \cite{YZickT10} and the surveys \cite{Sick12,Sick13}. In particular, it is known that the scales $\mathcal{N}^s_{p,u,q}$ and $B^{s,\tau}_{p,q}$ are different whenever $q$ is finite. Nevertheless, both include the classical Besov spaces $B^s_{p,q}$ as a particular case.

The theory of Morrey spaces goes back to Morrey \cite{Mor38} who considered related integral inequalities in connection with regularity properties of solutions to nonlinear elliptic equations. A further development of such theory was carried out in \cite{Cam64} leading to a wider class of spaces known as Morrey-Campanato spaces. The Morrey scale $M_{p,u}$ refines the usual scale of $L_p$ spaces since $M_{p,p}=L_p$. Morrey spaces and Besov spaces have important applications in the study of heat and Navier-Stokes equations, see \cite{L-R07,L-R12,L-R16}, \cite{Tri13}. As deeply discussed in \cite{L-R16}, the later equations constitute a challenging problem and a better understanding may require new features in new function spaces.

In the last decades function spaces with variable exponents have attracted the attention of many researchers not only by theoretical reasons but also by the role played by such spaces in some applications, including the modeling of electrorheological fluids \cite{Ruz00}, image restoration \cite{CheLR06,LiLP10}, PDE and the calculus of variations involving nonstandard growth conditions \cite{AceM01,Fan07,LZhang13}. We refer to the monographs \cite{C-UF13}, \cite{DHHR11} and \cite{KMRS16b} for the main properties, historical remarks and harmonic analysis results on variable exponent Lebesgue, Sobolev and Morrey-Campanato spaces (and others). Variable exponent Morrey spaces $M_{\px,\ux}$ were introduced in \cite{AlmHS08} and independently in \cite{KokM08,KokM10} and \cite{MizShi08}. The Besov scale $B^{\sx}_{\px,\qx}$ with all the indices variable was introduced by the first named author and H\"ast\"o \cite{AlmH10} through the consideration of the mixed Lebesgue-sequence spaces $\ell_{\qx}(L_\px)$. In particular, the exponent $q$ was allowed to depend on the space variable like $p(x)$. A simpler case occurs when $q$ is a constant since in that case $\ell_{q}(L_\px)$ becomes an iterated space and hence preserves the properties of the basis space $L_\px$. Note that the consideration of all the indices variable allow us to study important properties involving the interaction among all the parameters as it occurs, for example, in trace properties.

Recently Drihem \cite{Dri15b,Dri15c} and Yang, Yuan and Zhuo \cite{YZY15a} studied variable exponent versions $B^{\sx,\tau(\cdot)}_{\px,\qx}$ and $B_{\px,\qx}^{\sx,\phi}$ of spaces of Besov-type (see also the even more recent paper \cite{WYYZ18}). These two scales cover both the variable Besov spaces $B^{\sx}_{\px,\qx}$ and the Besov-type spaces $B^{s,\tau}_{p,q}$. On the other hand, Fu and Xu \cite{FuXu11} considered spaces $\mathcal{N}^s_{\px,\ux,q}$, therefore allowing $p$ and $u$ to vary from point to point but keeping $s$ and $q$ constant. Nevertheless, up to authors' knowledge, a full variable generalization of the Besov-Morrey scale  $\mathcal{N}^s_{p,u,q}$ is still not available. Probably, the main issue in dealing with such a generalization $\mathcal{N}^\sx_{\px,\ux,\qx}$ with all indices variable has to do  with the role played by the exponent $q$ when it is non constant. Even considering the partial variable generalization  $\mathcal{N}^s_{\px,\ux,q}$ from \cite{FuXu11}, quite some results presented there are unreliable, as some of the arguments used are wrong (see the end of Section~\ref{sec:besov-morrey} below for more details).

In this paper we present an appropriate setting to the full scale $\mathcal{N}^\sx_{\px,\ux,\qx}$ of Besov-Morrey spaces with variable smoothness and integrability. Actually, based on the knowledge we have acquired in \cite{AlmC15a, AlmC16b}, the new setting even works for $2$-microlocal versions $\BN$. As discussed in the survey paper \cite{Sick12} for the constant exponents case, there is no coincidence between this scale and the scales $B^{\sx,\tau(\cdot)}_{\px,\qx}$ and $B_{\px,\qx}^{\sx,\phi}$ mentioned above.  

We introduce mixed variable Morrey-sequence spaces $\ell_{\qx}\big(M_{\px,\ux}\big)$ within the framework of semimodular spaces. For non constant $q$, these spaces have a mixed structure like $\ell_{\qx}(L_\px)$, which prevent us to use maximal function inequalities as explained in \cite[Section~4]{AlmH10}. The lack of such tools does not allow to fit our spaces in the general axiomatic approach given in \cite{HedNet07}. On the other hand, the approach  proposed in \cite{LSUYY13}, based on Peetre maximal functions, does not help us as well, since it does not cover the cases when $q$ varies from point to point. Apart from possible applications to the study of Navier-Stokes equations, the new spaces $\BN$ unify into one single scale various function spaces treated separately by different authors in the recent years.

The new Morrey-sequence spaces introduced in this paper refine the Lebesgue-sequence scale introduced in \cite{AlmH10} since $\ell_{\qx}\big(M_{\px,\px}\big)= \ell_{\qx}(L_\px)$. This implies that the new variable Besov-Morrey scale $\BN$ includes the variable Besov scale $\B$ as a particular case, namely $\mathcal{N}^{\textbf{\textit{w}}}_{\px,\px,\qx}=\B$.

In this article much of the substantial work focuses in the study of various fundamental properties of the mixed Morrey-sequence spaces themselves. This constitutes a first part of the paper and it includes the study of the semimodular and quasinormed structures (Section~\ref{sec:mixedmorrey}) and a key convolution inequality that should replace maximal inequalities in this context (Section~\ref{sec:convolution}). The study of the full variable exponent Besov-Morrey scale is developed in a second part. After introducing the spaces $\BN$ in Section~\ref{sec:besov-morrey}, we present a characterization in terms of Peetre maximal functions (in Section~\ref{sec:PeetreMaxFunc}) and establish atomic and molecular representations for these spaces (in Section~\ref{sec:atomic}). As a by-product of the discrete decompositions, we also establish the embeddings $\cS \hookrightarrow \BN \hookrightarrow \cS'$ and show the completeness of the new spaces $\BN$.



\section{Preliminaries}\label{sec:prelim}

As usual, we denote by $\mathbb{R}^{n}$ the $n$-dimensional real
Euclidean space, $ \N$ the collection of all natural numbers and
$\N_{0}= \N\cup \{0\}$. If $a$ is a positive number then $\lfloor a \rfloor$ denotes its integer part. We write $B(x,r)$ for the open ball in
$\mathbb{R}^{n}$ centered at $x\in \mathbb{R}^{n}$ with radius $r>0$.
We use $c$ as a generic positive constant, i.e.\ a constant whose
value may change with each appearance. The expression $f
\lesssim g$ means that $f\leq c\,g$ for some independent constant
$c$, and $f\approx g$ means $f \lesssim g \lesssim f$.

Throughout the paper we denote by $\mathcal{M}(\Rn)$ the family of all complex or extended real-valued measurable functions on $\Rn$, and by $\mathcal{M}_0(\Rn)$ the {family} consisting of all those functions from $\mathcal{M}(\Rn)$ which are finite a.e. (with respect to the Lebesgue measure in $\Rn$).

%

\subsection{Semimodular spaces}

We refer to the monographs \cite{DHHR11} and \cite{Mus83} for an exposition on (semi)modular spaces. For the sake of completeness we give here a brief review on this subject.

\begin{definition}\label{def:modular}
Let $X$ be a (real or complex) vector space. A functional $\varrho: X \rightarrow [0,\infty]$ is called a semimodular on $X$ if:
\begin{enumerate}
\item[(i)] $\varrho(0_X)=0$;
\item[(ii)] $\varrho(\lambda x)=0$ for all $\lambda>0$ implies $x=0_X$;
\item[(iii)] $\varrho(\lambda x)=\varrho(x)$ for all $x\in X$ and all $\lambda$ with $|\lambda|=1$;
\end{enumerate}
The function $\varrho$ is called a modular if, in addition,
\begin{enumerate}
\item[(iv)] $\varrho(x)=0$ implies $x=0_X$.
\end{enumerate}
We say that $\varrho$ is left-continuous if
\begin{enumerate}
\item[(v)] $\lim\limits _{\lambda \to 1^-}\varrho(\lambda x)=\varrho(x)$ for all $x\in X$.
\end{enumerate}
If there exists $A\ge 1$ such that
\begin{enumerate}
\item[(vi)] $\varrho(\theta x+ (1-\theta)y)\leq A \left[ \theta \varrho(x)+ (1-\theta)\varrho(y)\right]$ for all $x,y\in X$ and $0\le \theta\le 1$,
\end{enumerate}
then $\varrho$ is said to be quasiconvex (convex if one can take $A=1$).
\end{definition}

We consider also the functional $\|\cdot\|_\varrho: X \to [0,\infty]$ given by
$$
\|x\|_\varrho :=\inf\left\{ \lambda >0 : \varrho(x/\lambda) \leq 1\right\}
$$
(assuming the usual convention $\inf \varnothing = \infty$).

If the semimodular $\varrho$ is left-continuous then we have
$$\|x\|_\varrho \leq 1 \ \ \ \text{if and only if} \ \ \ \ \varrho(x)\leq 1.$$
This fact, referred in \cite{DHHR11} as the \emph{unit ball property}, is very useful from the technical point of view since it allows one to skip working with the complicated structure of the quasinorm directly, in many situations of interest.

If $\varrho$ is a (quasi)convex semimodular, then
\begin{equation}\label{def:modular-space}
X_\varrho:=\{x\in X: \varrho(\lambda x)<\infty \ \ \text{for some} \ \ \lambda>0\}
\end{equation}
is a vector subspace of $X$ and $\|\cdot\|_\varrho$ defines a (quasi)norm on it ($X_\varrho$ is called a \emph{semimodular space}).

\subsection{Variable exponents}

By $\PP$ we denote the set of all measurable functions $p:\Rn
\rightarrow (0,\infty]$ (called \textit{variable exponents}) which
are essentially bounded away from zero.  For a measurable set $E\subset \Rn$
and $p\in \PP$,  we denote $p_E^+ :=\esssup_E p(x)$ and
$p_E^-:=\essinf_E p(x)$. For simplicity we use the abbreviations $p^+:=p_\Rn^+$ and
$p^-:=p_\Rn^-$.

The \emph{variable exponent Lebesgue space} $L_\px:=L_{\px}(\Rn)$ is the
{family of (equivalence classes of) functions $f \in \mathcal{M}(\Rn)$} such that
\begin{equation}\label{def:Lpmod}
\varrho_{\px}(f/\lambda):=\int_\Rn \phi_{p(x)}\left(\frac{|f(x)|}{\lambda}\right)\, dx
\end{equation}
is finite for some $\lambda>0$, where
\begin{equation}\label{def:Lpmod-aux}
\phi_{p(x)}(t) :=
\begin{cases}
t^{p(x)} & \text{ if } p(x)\in (0,\infty), \\
0 & \text{ if } p(x)=\infty \text{ and } t\in [0,1], \\
\infty & \text{ if } p(x)=\infty \text{ and } t\in(1,\infty]. \\
\end{cases}
\end{equation}

It is {clear that $\varrho_\px$ makes sense in $\mathcal{M}(\Rn)$ and it is known that it} defines a semimodular in the vector space {(of equivalence classes of functions in)} $\mathcal{M}_0(\Rn)$ and that $L_{\px}$ becomes a quasi-Banach space with respect to the quasinorm
\begin{align*}
    \| f|L_\px\| &:= \inf \left\{ \lambda>0 : \varrho_{\px}\left(f/\lambda\right) \leq 1\right\}.
\end{align*}
This functional defines a norm when $p^-\geq 1$. Simple calculations show that
\begin{equation}\label{Lp-t-power}
\left\| |f|^t \,|\, L_{\px/t}\right\| = \big\| f\,|\,L_{\px}\big\|^t \,, \ \ \ \ t\in(0,\infty).
\end{equation}
If $p(x)\equiv p\in(0,\infty]$ is constant, then $L_{\px}=L_{p}$ is the classical Lebesgue space.


Although the connection between the semimodular and the quasinorm is not so simple as in the constant exponent case, for variable exponents $p\in\PP$ we always have
$$\varrho_\px(f) \leq 1 \ \ \ \ \ \text{if and only if} \ \ \ \ \ \| f\,|L_\px\| \leq 1$$
due to the left-continuity of the semimodular  $\varrho_\px$.

It is worth noting that $L_{\px}$ has the lattice property and that the assertions $f\in L_\px$ and $\|f\,|\, L_\px\|<\infty$ are equivalent for any {$f\in \mathcal{M}(\Rn)$}. With an absolute constant, H\"older's inequality holds in the form
\[
\int_{\Rn} |f(x)g(x)|\,dx \leq 2\,\| f\,|L_\px\| \, \| g\,|L_{p'(\cdot)}\|
\]
for $p\in\PP$ with $p^-\ge 1$, where $p'$ denotes the conjugate exponent of $p$ defined pointwisely by $\tfrac{1}{p(x)}+\tfrac{1}{p'(x)}=1, \ \ x\in\Rn$.
These and other fundamental properties of the spaces $L_\px$, at least in the case $p^-\geq 1$, can be found in \cite{KR91} and in the recent monographs \cite{C-UF13}, \cite{DHHR11}. The definition above of $L_{\px}$ using the semimodular $\varrho_\px$ is taken from \cite{DHHR11}.

\subsection{Morrey spaces with variable exponents}

For $p,u\in \PP$ with $0<p^-\leq p(x)\leq u(x) \leq \infty$, the variable exponent Morrey space $M_{\px,\ux}:=M_{\px,\ux}(\Rn)$ consists of all functions {$f\in\mathcal{M}(\Rn)$} with finite quasinorm
\begin{equation}\label{def:morrey-norm}
\big\| f \,| M_{\px,\ux}\big\|:= \sup_{x\in\Rn,r>0} r^{\frac{n}{u(x)}-\frac{n}{p(x)}} \|f\,\chi_{B(x,r)} \,| L_\px\|.
\end{equation}
By the definition of the $L_\px$ quasinorm, we see that \eqref{def:morrey-norm} can also be written as
$$ \big\| f \,| M_{\px,\ux}\big\| = \sup_{x\in\Rn,r>0} \inf\left\{\lambda>0: \varrho_\px\left( r^{\frac{n}{u(x)}-\frac{n}{p(x)}} \tfrac{f}{\lambda}\,\chi_{B(x,r)} \right) \leq 1\right\}. $$

Note that variable exponent Morrey spaces were introduced in \cite{AlmHS08} in the Euclidean case and independently in \cite{KokM08,KokM10} in the more general setting of quasimetric spaces. Embedding results and equivalent norms were given in \cite{AlmHS08} in the case of bounded domains and $p(x)\geq 1$, under the $\log$-H\"older continuity of the exponents. Morrey spaces with variable exponents also appeared in \cite{Ohno08} and \cite{GulHS10}, the latter considering even generalized versions. We also mention \cite{Has09} where a partition norm was introduced in variable exponent Morrey spaces, which allows to pass from local results to corresponding global ones on $\Rn$. Not all definitions found in the literature are equivalent, though. The definition above with the quasinorm \eqref{def:morrey-norm} follows the approach from \cite{GulS13}, where the boundedness of various classical operators in such spaces was studied on unbounded domains.

Like in the $L_\px$ case, simple calculations show that
\[
\left\| |f|^t \,|\, M_{\px/t,\ux/t}\right\| = \big\| f\,|\,M_{\px,\ux}\big\|^t \,, \ \ \ \ t\in(0,\infty).
\]

As in the constant exponent setting, the variable exponent Morrey scale includes the variable Lebesgue spaces as a particular case. This property is formulated in Lemma~\ref{lem:coincides} below.

First we observe that, in a certain sense, the supremum and the infimum involved in the definition of the Morrey quasinorm may interchange with each other.

\begin{lemma}\label{lem:inf-sup}
Let $p\in\PP$, $v:\Rn\to [0,\infty)$ be measurable and $g$ be a complex or extended real-valued function on $\Rn\times\R^+\times\Rn$ such that, for any $(x,r)\in \Rn\times\R^+$, $g(x,r,\cdot)$ is measurable on $\Rn$. Then
\begin{equation}\label{eq:inf-sup}
\sup_{x\in\Rn,r>0} \inf\left\{\lambda>0: \varrho_\px\left( \frac{g(x,r,\cdot)}{\lambda^{v(\cdot)}} \right) \leq 1\right\} = \inf\left\{\lambda>0: \sup_{x\in\Rn,r>0} \varrho_\px\left( \frac{g(x,r,\cdot)}{\lambda^{v(\cdot)}} \right) \leq 1\right\}.
\end{equation}
\end{lemma}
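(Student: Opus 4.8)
The plan is to reduce the identity to an elementary monotonicity argument for the function
$$
F(x,r,\lambda) := \varrho_\px\!\left( \frac{g(x,r,\cdot)}{\lambda^{v(\cdot)}} \right), \qquad \lambda>0,
$$
which is well defined with values in $[0,\infty]$ thanks to the measurability of $g(x,r,\cdot)$, $v$ and $p$. Writing $L$ for the left-hand side and $R$ for the right-hand side of \eqref{eq:inf-sup}, and setting $a(x,r):=\inf\{\lambda>0: F(x,r,\lambda)\le 1\}$ (with $\inf \varnothing=\infty$), we have $L=\sup_{x,r}a(x,r)$ and $R=\inf\{\lambda>0: \sup_{x,r}F(x,r,\lambda)\le 1\}$. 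The first step is to record the key fact that, for each fixed $(x,r)$, the map $\lambda\mapsto F(x,r,\lambda)$ is non-increasing on $(0,\infty)$: since $v\ge 0$, the function $\lambda\mapsto\lambda^{v(y)}$ is non-decreasing for every $y$, hence $\lambda\mapsto |g(x,r,y)|/\lambda^{v(y)}$ is non-increasing, and composing with the non-decreasing functions $\phi_{p(y)}$ from \eqref{def:Lpmod-aux} and integrating over $\Rn$ preserves this monotonicity. Here one only has to check that no indeterminate expression arises, which is the case because $\lambda^{v(y)}$ is finite and strictly positive for $\lambda\in(0,\infty)$, even when $g(x,r,y)\in\{0,\infty\}$ or $v(y)=0$.

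Given this, the inequality $L\le R$ is immediate: if $\lambda>0$ satisfies $\sup_{x,r}F(x,r,\lambda)\le 1$, then $F(x,r,\lambda)\le 1$ for every $(x,r)$, so $a(x,r)\le\lambda$ for all $(x,r)$, whence $L\le\lambda$; taking the infimum over all such $\lambda$ gives $L\le R$, the case $R=\infty$ being trivial.

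For the reverse inequality $R\le L$ I would argue directly. One may assume $L<\infty$, since otherwise there is nothing to prove. Fix any $\lambda>L$. For each $(x,r)$ we have $a(x,r)\le L<\lambda$; because $\inf \varnothing=\infty$, this forces the set $\{\mu>0: F(x,r,\mu)\le 1\}$ to be non-empty and to contain some $\mu<\lambda$, so by the monotonicity from the first step $F(x,r,\lambda)\le F(x,r,\mu)\le 1$. As this holds for every $(x,r)$, we get $\sup_{x,r}F(x,r,\lambda)\le 1$, i.e.\ $R\le\lambda$; letting $\lambda$ decrease to $L$ yields $R\le L$, and together with the previous step this gives \eqref{eq:inf-sup}.

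I do not expect a genuine obstacle here: the only point requiring a little care is the monotonicity of $\lambda\mapsto F(x,r,\lambda)$ in the extended-real-valued setting (the behaviour of $\phi_{p(\cdot)}$ at $p(y)=\infty$ and the degenerate cases $v(y)=0$ or $g(x,r,y)\in\{0,\infty\}$), together with the bookkeeping of the convention $\inf \varnothing=\infty$ and of the boundary cases $L\in\{0,\infty\}$ and $R=\infty$. It is worth noting that left-continuity of the semimodular $\varrho_\px$ plays no role in this particular statement; only the monotonicity just described is used.
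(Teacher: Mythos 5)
Your proof is correct and follows essentially the same route as the paper's: both directions rest on the monotonicity of $\lambda\mapsto\varrho_\px\big(g(x,r,\cdot)/\lambda^{v(\cdot)}\big)$ together with elementary sup/inf manipulations via an auxiliary $\lambda>L$ (resp.\ the paper's $b+\varepsilon$, $a+\varepsilon$). Your write-up is merely a little more explicit about why that monotonicity holds and about the conventions $\inf\varnothing=\infty$ and the degenerate cases, which the paper leaves implicit.
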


\begin{proof}
For simplicity, let $a$ and $b$ denote, respectively, the right-hand side and the left-hand side of \eqref{eq:inf-sup}. We show first that $a\leq b$ (assuming the latter finite, otherwise there is nothing to prove). Given any $\varepsilon>0$, for all $x\in\Rn$ and $r>0$ we have
$$
b + \varepsilon > \inf\left\{\lambda>0: \varrho_\px\left( \frac{g(x,r,\cdot)}{\lambda^{v(\cdot)}} \right) \leq 1\right\}.
$$
Thus
$$
\varrho_\px\left( \frac{g(x,r,\cdot)}{(b + \varepsilon)^{v(\cdot)}} \right) \leq 1
$$
by the monotonicity of $\varrho_\px$. Passing to the supremum on $x$ and $r$, we get $b+\varepsilon \geq a$. Hence the claim follows by letting $\varepsilon \to 0$.
We show now that $a\geq b$ (for $a<\infty$). Given any $\varepsilon>0$, we have
$$
\sup_{x\in\Rn,r>0} \varrho_\px\left( \frac{g(x,r,\cdot)}{(a+\varepsilon)^{v(\cdot)}} \right) \leq 1
$$
since $\displaystyle \sup_{x\in\Rn,r>0} \varrho_\px$ is order preserving. Consequently, for all $x\in\Rn$ and $r>0$,
$$
\varrho_\px\left( \frac{g(x,r,\cdot)}{(a+\varepsilon)^{v(\cdot)}}  \right) \leq 1\,,
$$
which implies
$$
\inf\left\{\lambda>0: \varrho_\px\left( \frac{g(x,r,\cdot)}{\lambda^{v(\cdot)}} \right) \leq 1\right\} \leq a+\varepsilon.
$$
Therefore $b \leq a+\varepsilon$, from which the claim follows by the arbitrariness of $\varepsilon>0$.
\end{proof}

Using Lemma~\ref{lem:inf-sup} with $v(y)\equiv 1$ and $g(x,r,y)=r^{\frac{n}{u(x)}-\frac{n}{p(x)}} f(y)\,\chi_{B(x,r)}(y)$, we get

\begin{corollary}\label{cor:norm-morrey-alt}
Let $p,u\in \PP$ with $p(x)\leq u(x)$. For any {$f\in\mathcal{M}(\Rn)$} it holds
\begin{equation}\label{eq:norm-morrey-alt}
\big\| f \,| M_{\px,\ux}\big\|= \inf\left\{\lambda>0: \sup_{x\in\Rn,r>0} \varrho_\px\left( \tfrac{1}{\lambda}\,r^{\frac{n}{u(x)}-\frac{n}{p(x)}} f\,\chi_{B(x,r)} \right) \leq 1\right\}.
\end{equation}
\end{corollary}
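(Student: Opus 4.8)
The plan is to read off \eqref{eq:norm-morrey-alt} directly from Lemma~\ref{lem:inf-sup} by making the substitution announced in the sentence just before the statement. First I would fix $v(y)\equiv 1$ and set $g(x,r,y):=r^{\frac{n}{u(x)}-\frac{n}{p(x)}}\,f(y)\,\chi_{B(x,r)}(y)$ for $(x,r,y)\in\Rn\times\R^+\times\Rn$, and check that the hypotheses of the lemma hold for this data: $v$ is measurable with values in $[0,\infty)$ since it is constant, and for each fixed $(x,r)\in\Rn\times\R^+$ the function $g(x,r,\cdot)$ is measurable on $\Rn$, being the product of $f\in\mathcal{M}(\Rn)$ with the indicator of a ball and a non-negative constant. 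Hence Lemma~\ref{lem:inf-sup} applies.

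Next I would identify the two sides of \eqref{eq:inf-sup} for this choice. Since $v\equiv 1$ we have $\lambda^{v(\cdot)}=\lambda$, so for each $(x,r)$
\[
\inf\left\{\lambda>0: \varrho_\px\!\left(\frac{g(x,r,\cdot)}{\lambda^{v(\cdot)}}\right)\leq 1\right\}
= \inf\left\{\lambda>0: \varrho_\px\!\left(r^{\frac{n}{u(x)}-\frac{n}{p(x)}}\,\tfrac{f}{\lambda}\,\chi_{B(x,r)}\right)\leq 1\right\},
\]
and by the homogeneity of the $L_\px$-quasinorm together with its definition as precisely such an infimum, this equals $r^{\frac{n}{u(x)}-\frac{n}{p(x)}}\,\|f\,\chi_{B(x,r)}\,|\,L_\px\|$; taking the supremum over $x\in\Rn$, $r>0$ then gives $\|f\,|\,M_{\px,\ux}\|$ by \eqref{def:morrey-norm}. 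Thus the left-hand side of \eqref{eq:inf-sup} is exactly $\|f\,|\,M_{\px,\ux}\|$. On the other hand, the right-hand side of \eqref{eq:inf-sup} for the same data reads
\[
\inf\left\{\lambda>0: \sup_{x\in\Rn,r>0}\varrho_\px\!\left(\tfrac1\lambda\, r^{\frac{n}{u(x)}-\frac{n}{p(x)}}\, f\,\chi_{B(x,r)}\right)\leq 1\right\},
\]
which is the right-hand side of \eqref{eq:norm-morrey-alt}. Equating the two sides via Lemma~\ref{lem:inf-sup} completes the argument.

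There is no real obstacle here: the substance is already carried by Lemma~\ref{lem:inf-sup}, and the only points that deserve a word are the (trivial) verification that $g(x,r,\cdot)$ is measurable for every $(x,r)$, so that the lemma is applicable, and the moving of the scalar factor $r^{\frac{n}{u(x)}-\frac{n}{p(x)}}$ in and out of the $L_\px$-quasinorm, which is immediate from homogeneity. So once the dictionary between the two notations is in place the proof reduces to a one-line invocation of the lemma.
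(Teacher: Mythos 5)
Your proposal is correct and is exactly the argument the paper intends: the corollary is obtained by applying Lemma~\ref{lem:inf-sup} with $v(y)\equiv 1$ and $g(x,r,y)=r^{\frac{n}{u(x)}-\frac{n}{p(x)}}f(y)\,\chi_{B(x,r)}(y)$, identifying the left-hand side of \eqref{eq:inf-sup} with $\|f\,|\,M_{\px,\ux}\|$ via the rewriting of \eqref{def:morrey-norm} given just after that definition. Your added checks (measurability of $g(x,r,\cdot)$ and pulling the constant $r^{\frac{n}{u(x)}-\frac{n}{p(x)}}$ through the quasinorm) are sound and merely make explicit what the paper leaves implicit.
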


The identification in \eqref{eq:norm-morrey-alt} was already observed in \cite[Lemma~3]{AlmHS08} in the case $1\leq p(x)\leq p^+<\infty$.

\begin{remark}\label{rem:morrey-alternative}
Note that \eqref{eq:norm-morrey-alt} suggests an alternative way of introducing variable exponent Morrey spaces. Instead of presenting the Morrey quasinorm \eqref{def:morrey-norm} directly, one can introduce it from an appropriate semimodular space setting. In fact, it can be checked that
$$
\varrho_{\px,\ux}(f):= \sup_{x\in\Rn,r>0} \varrho_\px\left(r^{\frac{n}{u(x)}-\frac{n}{p(x)}} f\,\chi_{B(x,r)} \right)
$$
defines a left-continuous semimodular in $\mathcal{M}_0(\Rn)$ and that $M_{\px,\ux}$ coincides with the space built according to \eqref{def:modular-space}.

\end{remark}

\begin{lemma}\label{lem:sup}
Let $p\in \PP$. For any $g\in\mathcal{M}(\Rn)$,
$$\sup_{x\in\Rn,r>0} \varrho_\px\left(g\,\chi_{B(x,r)} \right) = \varrho_\px\left(g \right).$$
\end{lemma}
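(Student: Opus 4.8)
The plan is to unwind the definition of the semimodular $\varrho_\px$ from \eqref{def:Lpmod}--\eqref{def:Lpmod-aux}, so that the identity becomes a statement about integrals of a single nonnegative measurable function, and then to close it with a monotone convergence argument. The easy inequality is immediate: since $\phi_{p(y)}$ is nondecreasing on $[0,\infty]$ and $|g\,\chi_{B(x,r)}|\le |g|$ pointwise, the lattice/monotonicity property of $\varrho_\px$ gives $\varrho_\px(g\,\chi_{B(x,r)}) \le \varrho_\px(g)$ for every $x\in\Rn$ and $r>0$, and taking the supremum yields $\sup_{x,r}\varrho_\px(g\,\chi_{B(x,r)}) \le \varrho_\px(g)$.

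For the reverse inequality I would not work with arbitrary centres and radii; it suffices to use the balls $B(0,k)$, $k\in\N$, which form a subfamily of $\{B(x,r): x\in\Rn,\, r>0\}$, so that $\sup_{x,r}\varrho_\px(g\,\chi_{B(x,r)}) \ge \sup_k \varrho_\px(g\,\chi_{B(0,k)})$. Now $\chi_{B(0,k)}\uparrow 1$ pointwise as $k\to\infty$, and from \eqref{def:Lpmod-aux} one checks that $\phi_{p(y)}\big(|g(y)|\,\chi_{B(0,k)}(y)\big) = \phi_{p(y)}(|g(y)|)\,\chi_{B(0,k)}(y)$, using $\phi_{p(y)}(0)=0$ (which holds both when $p(y)<\infty$ and when $p(y)=\infty$). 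Hence $y\mapsto \phi_{p(y)}(|g(y)|)\,\chi_{B(0,k)}(y)$ is a nondecreasing sequence of nonnegative measurable functions converging pointwise to $y\mapsto\phi_{p(y)}(|g(y)|)$, and the monotone convergence theorem gives $\varrho_\px(g\,\chi_{B(0,k)}) \to \varrho_\px(g)$, so that $\sup_k\varrho_\px(g\,\chi_{B(0,k)}) = \varrho_\px(g)$. Combining with the first step proves the claim.

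The only delicate point — and essentially the sole obstacle — is that $\phi_{p(y)}$ takes the value $+\infty$ when $p(y)=\infty$ and $|g(y)|>1$, so $\varrho_\px(g)$ may be infinite and one must be sure the limit argument still makes sense. This is harmless: the monotone convergence theorem is valid for $[0,\infty]$-valued functions, so the argument above is unchanged and simply delivers the equality $\infty=\infty$ in that degenerate case (indeed, if $\varrho_\px(g)=\infty$ then already $\varrho_\px(g\,\chi_{B(0,k)})\to\infty$).
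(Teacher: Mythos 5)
Your proposal is correct and follows essentially the same route as the paper: the trivial inequality via monotonicity of the semimodular, and the reverse inequality by restricting to the balls $B(0,k)$ and applying the monotone convergence theorem to $\phi_{p(y)}(|g(y)|)\,\chi_{B(0,k)}(y)\uparrow\phi_{p(y)}(|g(y)|)$. The extra remarks on $\phi_{p(y)}(0)=0$ and on the $[0,\infty]$-valued case are fine details the paper leaves implicit.
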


\begin{proof}
Since $\phi_{p(y)}\big(|g(y)|\,\chi_{B(0,N)}(y)\big)$ increases with $N\in\N$, an application of the monotone convergence theorem yields
\begin{eqnarray*}
\sup_{x\in\Rn,r>0} \varrho_\px\left(g\,\chi_{B(x,r)} \right) & \geq & \sup_{N\in\N} \varrho_\px\left(g\,\chi_{B(0,N)} \right) = \sup_{N\in\N} \int_{\Rn} \phi_{p(y)}\big(|g(y)|\,\chi_{B(0,N)}(y)\big)dy\\
& = & \lim_{N\to\infty} \int_{\Rn} \phi_{p(y)}\big(|g(y)|\,\chi_{B(0,N)}(y)\big)dy = \varrho_\px\left(g \right).
\end{eqnarray*}
The converse inequality is clear since $|g|\,\chi_{B(x,r)} \leq |g|$ for any $x\in\Rn$ and $r>0$.
\end{proof}

By \eqref{eq:norm-morrey-alt} and Lemma~\ref{lem:sup} we obtain the following coincidence:

\begin{lemma}\label{lem:coincides}
For any $p\in \PP$ we have $M_{\px,\px} = L_\px$ (with equal quasinorms).
\end{lemma}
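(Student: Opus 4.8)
The plan is to derive the identity directly from the two preceding results, namely the alternative description of the Morrey quasinorm in \eqref{eq:norm-morrey-alt} and the supremum identity in Lemma~\ref{lem:sup}. No genuinely new estimate is needed; the statement is essentially a bookkeeping consequence of the fact that the Morrey weight $r^{\frac{n}{u(x)}-\frac{n}{p(x)}}$ collapses to $1$ when $u=p$.

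Concretely, I would start by specializing Corollary~\ref{cor:norm-morrey-alt} to the case $u=p$. Since $\frac{n}{u(x)}-\frac{n}{p(x)}=0$ for every $x\in\Rn$ (with the convention $\frac{n}{\infty}=0$ taking care of the points where $p(x)=\infty$), the factor $r^{\frac{n}{u(x)}-\frac{n}{p(x)}}$ equals $1$ identically, so \eqref{eq:norm-morrey-alt} reads
$$
\big\| f \,| M_{\px,\px}\big\|= \inf\left\{\lambda>0: \sup_{x\in\Rn,r>0} \varrho_\px\left( \tfrac{1}{\lambda}\, f\,\chi_{B(x,r)} \right) \leq 1\right\}
$$
for any $f\in\mathcal{M}(\Rn)$.

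Next I would apply Lemma~\ref{lem:sup} with $g=f/\lambda$ to rewrite the inner supremum as $\varrho_\px(f/\lambda)$. This turns the displayed expression into
$$
\big\| f \,| M_{\px,\px}\big\|= \inf\left\{\lambda>0: \varrho_\px\left( \tfrac{f}{\lambda} \right) \leq 1\right\},
$$
which is precisely the definition of $\| f\,|L_\px\|$. Hence the two quasinorms coincide, and since membership in either space is equivalent to finiteness of the corresponding quasinorm (as recorded for $L_\px$ in the preliminaries and for $M_{\px,\ux}$ by definition), the two spaces are equal as sets as well. There is no real obstacle here: the only point requiring a word of care is the vanishing of the weight exponent at points where $p(x)=\infty$, which is handled by the standard convention $n/\infty=0$, so that the argument of $\varrho_\px$ is genuinely unchanged.
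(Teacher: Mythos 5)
Your proposal is correct and follows exactly the paper's intended argument: the authors also obtain Lemma~\ref{lem:coincides} by combining Corollary~\ref{cor:norm-morrey-alt} (with the weight $r^{\frac{n}{u(x)}-\frac{n}{p(x)}}$ reducing to $1$ when $u=p$) with Lemma~\ref{lem:sup} applied to $g=f/\lambda$. Nothing is missing.
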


\subsection{Mixed Lebesgue-sequence spaces}
To deal with variable exponent Besov and Triebel--Lizorkin scales we need to consider appropriate mixed sequences spaces. For the variable Triebel-Lizorkin scale one can easily define the space $L_\px(\ell_\qx)$ for every $p,q\in\PP$ through the quasinorm
\begin{equation} \label{def:lpq}
\|(f_\nu)_\nu\,| L_\px(\ell_\qx) \| := \big\|
\|(f_\nu(x))_\nu\,| \ell_{q(x)}\|\,| L_\px\big\|\,,
\end{equation}
on sequences {$(f_\nu)_{\nu}\subset \mathcal{M}(\Rn)$ where \eqref{def:lpq} is finite} (cf. \cite{DieHR09}). This is always a norm if $\min\{p^-,q^-\} \geq 1$. Note that $\ell_{q(x)}$ is just a standard discrete Lebesgue space (for each $x\in\Rn$), and that \eqref{def:lpq} is well defined since $q(x)$ does not depend on $\nu$ and the function $x\mapsto \|(f_\nu(x))_\nu\,| \ell_{q(x)}\|$ is always measurable when $q\in\PP$.

The situation is much harder in the variable Besov scale due to the dependence of $q$ on the space variable $x$. Nevertheless, in \cite[Definition~3.1]{AlmH10} the authors were able to introduce the \textit{mixed Lebesgue-sequence space} $\ell_\qx(L_\px)$ within the setting of semimodular spaces as follows. For $p,q\in\PP$, the functional
\begin{equation}\label{def:lpqmod}
\varrho_{\ell_\qx(L_\px)}\big( (f_\nu)_\nu\big ) := \sum_{\nu\ge 0}
\inf\Big\{\lambda>0: \, \varrho_{\px}\Big(f_\nu
/\lambda^{\frac1{\qx}}\Big)\le 1 \Big\}
\end{equation}
defines a left-continuous semimodular on $\mathcal{M}_0(\Rn)$ (with the convention $\lambda^{\frac{1}{\infty}}=1$). Note that if $q^+<\infty$ then \eqref{def:lpqmod} takes the simpler form
\begin{equation}\label{def:lpqmodsimple}
\varrho_{\ell_\qx(L_\px)}\big( (f_\nu)_\nu\big ) = \sum_{\nu\ge 0} \Big\|
|f_\nu|^{\qx} | L_{\frac{\px}{\qx}}\Big\|.
\end{equation}
When $q^+=\infty$ and $p(x)\geq q(x)$ a.e. we can still use this simpler form of \eqref{def:lpqmod}, with the interpretation $\frac{\infty}{\infty}=1$ and the $\qx$-power inside the $L_{\frac{\px}{\qx}}$-norm understood as $\phi_\qx\left(|f_\nu|\right)$ according to \eqref{def:Lpmod-aux}, cf. \cite[Remark~1]{KemV13}.

The space $\ell_\qx(L_\px)$ consists of all sequences $(f_\nu)_{\nu}$  such that $\varrho_{\ell_\qx(L_\px)}\big( \mu(f_\nu)_\nu\big ) < \infty$ for some $\mu>0$.  In \cite{AlmH10} it was shown that
\begin{equation}\label{def:lpqnorm}
  \|(f_\nu)_\nu\,| \ell_\qx(L_\px)\| :=
  \inf\Big\{ \mu>0:\, \varrho_{\ell_\qx(L_\px)}\big( \tfrac1\mu  (f_\nu)_\nu\big ) \le 1\Big\}
\end{equation}
defines a quasinorm in $\ell_\qx(L_\px)$ for every $p,q\in\PP$ and that $\|\cdot\,| \ell_\qx(L_\px)\|$ is a
norm either when $q\geq 1$ is constant and $p^-\geq 1$, or when $\frac{1}{p(x)}+\frac{1}{q(x)} \leq 1$ almost everywhere. More recently, it was
 observed in \cite{KemV13} that it also becomes a norm if $1\leq q(x)\leq p(x)\leq \infty$. Contrarily to the situation when $q$ is constant, the expression \eqref{def:lpqnorm} is not necessarily a norm when $\min\{p^-,q^-\} \geq 1$ (see \cite{KemV13} for an example showing that the triangle inequality may fail in this case).

It is not hard to check that $\|(f_\nu)_\nu\,| \ell_\qx(L_\px)\|<\infty$ implies $(f_\nu)_\nu\in\ell_\qx(L_\px)$, which in turn implies $f_\nu\in L_\px$ for each $\nu\in\Nz$. Note also that the left-continuity of the semimodular ensures the useful equivalence
$$\|(f_\nu)_\nu\,| \ell_\qx(L_\px)\| \le 1 \ \ \ \text{if and only if} \ \ \ \varrho_{\ell_\qx(L_\px)}\big( (f_\nu)_\nu\big ) \le 1 \ \ \ \ \ \text{(unit ball property)}.$$

It is worth noting that $\ell_\qx(L_\px)$ is a really iterated space when $q\in(0,\infty]$ is constant (\cite[Proposition~3.3]{AlmH10}), and in that case the quasinorm is given by
\begin{equation}\label{iterated}
\|(f_\nu)_\nu\,| \ell_q(L_\px)\| = \big\| \big(\| f_\nu\,| L_\px\|\big)_\nu \,| \ell_q\big\|.
\end{equation}

As shown in \cite[Example~3.4]{AlmH10}, the values of $q$ have no influence on $\|(f_\nu)_\nu\,| \ell_\qx(L_\px)\|$ when we restrict ourselves to sequences having just one non-zero entry. In fact, as in the constant exponent case, there holds
$\|(f_\nu)_\nu\,|\, \ell_\qx(L_\px)\| = \|f\,|\, L_\px\|$ when $f_{\nu_0}=f$ for some fixed $\nu_0\in\Nz$ and $f_\nu = 0$ for all $\nu \not= \nu_0$.

\section{Variable exponent mixed Morrey-sequence spaces}\label{sec:mixedmorrey}

We introduce new mixed sequence spaces in the variable exponent Morrey setting as follows.

\begin{definition}
Let $p,q,u\in\PP$ with $p(x)\leq u(x)$. Given a sequence {$(f_\nu)_\nu \subset \mathcal{M}(\Rn)$}, we set
\begin{equation}\label{def:lpqumod}
\varrho_{\ell_\qx \left(M_{\px,\ux}\right)}\big( (f_\nu)_\nu\big ) := \sum_{\nu\ge 0} \sup_{x\in\Rn,r>0}
\inf\left\{\lambda>0:  \varrho_{\px}\Big(r^{\frac{n}{u(x)}-\frac{n}{p(x)}}f_\nu \,\chi_{B(x,r)}/\lambda^{\frac1{\qx}}\Big)\le 1 \right\}.
\end{equation}
\end{definition}

\begin{remark}\label{rem:modular-simpler}
Note that the infimum inside the expression \eqref{def:lpqumod} depends in general on $\nu\in\Nz$, $x\in\Rn$ and $r>0$. Therefore, we should take into account this fact when calculating $\varrho_{\ell_\qx \left(M_{\px,\ux}\right)}\big( (f_\nu)_\nu\big )$. When $q^+<\infty$ or $q^+=\infty$ and $p(x)\geq q(x)$ we can simplify \eqref{def:lpqumod} like in the $\ell_\qx\big(L_{\px}\big)$ case:
\begin{equation}\label{def:lpqumod-simple}
\varrho_{\ell_\qx \left(M_{\px,\ux}\right)}\big( (f_\nu)_\nu\big )= \sum_{\nu\geq 0} \sup_{x\in\Rn,r>0} \left\| \phi_\qx\left(r^{\frac{n}{u(x)}-\frac{n}{p(x)}}|f_\nu| \,\chi_{B(x,r)}\right) |\,L_{\frac{\px}{\qx}}\right\|.
\end{equation}
\end{remark}

\begin{definition}
Let $p,q,u\in\PP$ with $p(x)\leq u(x)$.
The mixed Morrey-sequence space $\ell_\qx\big(M_{\px,\ux}\big)$ consists of all sequences {$(f_\nu)_\nu \subset \mathcal{M}(\Rn)$} such that $\varrho_{\ell_\qx \left(M_{\px,\ux}\right)}\big(\mu (f_\nu)_\nu\big )<\infty$ for some $\mu>0$. For $(f_\nu)_\nu\in \ell_\qx\big(M_{\px,\ux}\big)$ we define
\begin{equation}\label{def:lpqunorm}
\big\|(f_\nu)_\nu\,|\,\ell_\qx\big(M_{\px,\ux}\big)\big\| := \inf\left\{\mu>0:  \varrho_{\ell_\qx \left(M_{\px,\ux}\right)}\Big(\tfrac{1}{\mu} (f_\nu)_\nu\Big )\le 1 \right\}.
\end{equation}
\end{definition}

It can be shown that $\ell_\qx\big(M_{\px,\ux}\big)$ is a vector space: the fact that it is closed with respect to the sum follows simultaneously with the proof of the (quasi)triangle inequality of $\big\|\cdot\,|\,\ell_\qx\big(M_{\px,\ux}\big)\big\|$ (cf. Proposition~\ref{pro:reduction} and Theorem~\ref{theo:lpqunorm}); the other properties are simple to prove.

Later we shall prove that $\varrho_{\ell_\qx \left(M_{\px,\ux}\right)}$ defines a semimodular and that $\big\|\cdot\,|\,\ell_\qx\big(M_{\px,\ux}\big)\big\|$ defines a quasinorm in $\ell_\qx\big(M_{\px,\ux}\big)$. Straightforward calculations show that
\begin{equation}\label{t-power}
\big\|(f_\nu)_\nu\,|\, \ell_\qx\big(M_{\px,\ux}\big)\big\|^t  =  \big\|(|f_\nu|^t)_\nu \;|\, \ell_{\qx/t}\big(M_{\px/t,\ux/t}\big)\big\|\,, \ \ \ \ \ \forall t>0.
\end{equation}

Although the expressions in \eqref{def:lpqumod} and \eqref{def:lpqunorm} are quite complicated to deal with in general, in some situations nice simplifications occur. Let us discuss how the semimodular and the quasinorm behave in some special cases. The corresponding properties are listed below as propositions.

As for the variable Lebesgue-sequence space (cf. \cite[Proposition~3.3]{AlmH10}), we shall have a really iterated structure when $q$ is a constant:

\begin{proposition}\label{pro:really-iterated}
Let $p,q,u\in\PP$ with $p(x)\leq u(x)$. If $q\in(0,\infty]$ is constant (almost everywhere), then
\begin{equation}\label{really-iterated}
\big\|(f_\nu)_\nu\,|\,\ell_q\big(M_{\px,\ux}\big)\big\| = \Big\| \big(\big\|f_\nu\,|\,M_{\px,\ux}\big\|\big)_\nu \, |\, \ell_q \Big\|
\end{equation}
for every sequence {$(f_\nu)_\nu \subset \mathcal{M}(\Rn)$}.
\end{proposition}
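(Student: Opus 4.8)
The plan is to reduce the claimed identity to the definition of the $\ell_q$-quasinorm via the unit ball property. Fix a sequence $(f_\nu)_\nu\subset\mathcal M(\R^n)$ and first treat the case $q<\infty$. The point is to unwind the inner infimum in \eqref{def:lpqumod} when $q$ is constant: for each fixed $\nu$, the quantity
$$
\sup_{x\in\R^n,r>0}\inf\Big\{\lambda>0:\varrho_{\px}\big(r^{\frac n{u(x)}-\frac n{p(x)}}f_\nu\,\chi_{B(x,r)}/\lambda^{1/q}\big)\le 1\Big\}
$$
equals $\big\|f_\nu\,|\,M_{\px,\ux}\big\|^q$. Indeed, substituting $\mu=\lambda^{1/q}$ turns the inner infimum into $\big(\inf\{\mu>0:\varrho_{\px}(r^{\frac n{u(x)}-\frac n{p(x)}}f_\nu\,\chi_{B(x,r)}/\mu)\le 1\}\big)^q=\big(\| f_\nu\,\chi_{B(x,r)}\,|L_\px\|\, r^{\frac n{u(x)}-\frac n{p(x)}}\big)^q$ — here I would invoke \eqref{Lp-t-power}, or rather just the elementary fact that pulling a positive scalar out of the $L_\px$-quasinorm commutes with raising to the power $q$ — and taking the supremum over $x,r$ then gives $\big\|f_\nu\,|\,M_{\px,\ux}\big\|^q$ by \eqref{def:morrey-norm}. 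Consequently $\varrho_{\ell_q(M_{\px,\ux})}\big((f_\nu)_\nu\big)=\sum_{\nu\ge 0}\big\|f_\nu\,|\,M_{\px,\ux}\big\|^q$, which is precisely the $\ell_q$-semimodular applied to the scalar sequence $\big(\| f_\nu\,|\,M_{\px,\ux}\|\big)_\nu$.

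From that identity \eqref{really-iterated} follows by the standard scaling argument: dividing $(f_\nu)_\nu$ by $\mu>0$ multiplies the Morrey quasinorm of each entry by $1/\mu$, so
$$
\varrho_{\ell_q(M_{\px,\ux})}\Big(\tfrac1\mu(f_\nu)_\nu\Big)=\sum_{\nu\ge 0}\mu^{-q}\big\|f_\nu\,|\,M_{\px,\ux}\big\|^q,
$$
and taking the infimum over those $\mu$ making this $\le 1$ reproduces exactly $\big\|\big(\|f_\nu\,|\,M_{\px,\ux}\|\big)_\nu\,|\,\ell_q\big\|$. I would phrase this cleanly by noting the inner infimum over $\mu$ is literally the definition \eqref{def:lpqunorm} on the left and the definition of $\|\cdot\,|\,\ell_q\|$ on the right.

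The case $q=\infty$ must be handled separately, since then the exponents $\lambda^{1/q}$ degenerate to $1$ (per the convention $\lambda^{1/\infty}=1$ recorded after \eqref{def:lpqmod}) and the sum-over-$\nu$ in \eqref{def:lpqumod} stays a genuine sum rather than collapsing to a supremum. Here I would argue directly from \eqref{def:lpqunorm}: for $q=\infty$, $\varrho_{\ell_\infty(M_{\px,\ux})}\big(\tfrac1\mu(f_\nu)_\nu\big)=\sum_{\nu\ge 0}\sup_{x,r}\inf\{\lambda>0:\varrho_\px(r^{\cdots}f_\nu\chi_{B(x,r)}/(\mu\lambda))\le 1\}$, and since each inner term is the Morrey quasinorm of $f_\nu/\mu$ which is either $0$ or equals $\|f_\nu|M_{\px,\ux}\|/\mu$, a better route is to observe that $\varrho_{\ell_\infty(M_{\px,\ux})}\big((g_\nu)_\nu\big)\le 1$ forces each individual $\|g_\nu|M_{\px,\ux}\|\le$ (something); cleaner still, I would just compute $\|(f_\nu)_\nu|\ell_\infty(M_{\px,\ux})\|$ using left-continuity / the unit ball property and compare with $\sup_\nu\|f_\nu|M_{\px,\ux}\|$, checking both inequalities by testing with $\mu$ slightly above and below the supremum. (In fact one expects $\varrho_{\ell_\infty(M)}$ to be $\infty$ unless only finitely many $f_\nu$ are nonzero with the right scaling — matching the behaviour of $\ell_\infty(L_\px)$ in \cite{AlmH10} — so the identity \eqref{really-iterated} holds with both sides being $\sup_\nu\|f_\nu|M_{\px,\ux}\|$ under the appropriate reading; I would mirror the statement and proof of \cite[Proposition~3.3]{AlmH10} here.)

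The only genuinely delicate point is the commutation of the inner infimum with the $q$-th power, i.e. justifying $\inf\{\lambda>0:\varrho_\px(h/\lambda^{1/q})\le 1\}=\big(\inf\{\mu>0:\varrho_\px(h/\mu)\le 1\}\big)^q$; this is just the monotone bijection $\mu\mapsto\mu^q$ on $(0,\infty)$ applied inside an infimum, but it is worth spelling out because it is the mechanism that converts the awkward $\lambda^{1/\qx}$ normalization in \eqref{def:lpqumod} into the familiar $\ell_q$ structure — everything else is bookkeeping with \eqref{def:morrey-norm}, \eqref{def:lpqumod}, \eqref{def:lpqunorm}. I do not anticipate a real obstacle; the constant-$q$ hypothesis is exactly what makes the $\nu$-dependence of the inner infimum disappear.
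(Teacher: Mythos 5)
Your treatment of the case $q<\infty$ is correct and is essentially the paper's own argument: the substitution $\lambda=\mu^q$ that turns the inner infimum into $\big\|r^{\frac{n}{u(x)}-\frac{n}{p(x)}}f_\nu\chi_{B(x,r)}\,|\,L_\px\big\|^q$ is exactly what the paper does implicitly by combining \eqref{def:lpqumod-simple} with \eqref{Lp-t-power}, and the concluding homogeneity step coincides with the paper's. The one point you single out as delicate (commuting the infimum with the monotone bijection $\mu\mapsto\mu^q$) is indeed all that is needed there.

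The case $q=\infty$, however, is not in order as written. Your displayed formula
$\varrho_{\ell_\infty(M_{\px,\ux})}\big(\tfrac1\mu(f_\nu)_\nu\big)=\sum_{\nu\ge 0}\sup_{x,r}\inf\{\lambda>0:\varrho_\px(r^{\frac{n}{u(x)}-\frac{n}{p(x)}}f_\nu\chi_{B(x,r)}/(\mu\lambda))\le 1\}$
ignores the convention $\lambda^{1/\infty}=1$: for $q=\infty$ the argument of $\varrho_\px$ in \eqref{def:lpqumod} contains no $\lambda$ at all, so each inner infimum is either $0$ (when $\varrho_\px(\tfrac1\mu r^{\frac{n}{u(x)}-\frac{n}{p(x)}}f_\nu\chi_{B(x,r)})\le 1$) or $\infty$ (infimum of the empty set) --- it is never $\|f_\nu\,|\,M_{\px,\ux}\|/\mu$. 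If your formula were carried through it would produce $\mu^{-1}\sum_\nu\|f_\nu\,|\,M_{\px,\ux}\|$, i.e.\ the $\ell_1$ rather than the $\ell_\infty$ expression. The parenthetical guess that the semimodular is infinite unless only finitely many $f_\nu$ are nonzero is likewise false: by the dichotomy just described it equals $0$ precisely when $\sup_{x,r}\varrho_\px(\tfrac1\mu r^{\frac{n}{u(x)}-\frac{n}{p(x)}}f_\nu\chi_{B(x,r)})\le 1$ for \emph{every} $\nu$, and $\infty$ otherwise, regardless of how many entries vanish. The correct route --- which is what the paper does, following \cite[Proposition~3.3]{AlmH10} as you suggest at the very end but do not actually execute --- is to use this $0/\infty$ dichotomy together with Lemma~\ref{lem:inf-sup} and Corollary~\ref{cor:norm-morrey-alt} to show that the admissible $\mu$ in \eqref{def:lpqunorm} are exactly those with $\mu\ge\sup_\nu\|f_\nu\,|\,M_{\px,\ux}\|$ up to an $\varepsilon$-perturbation (needed because $\|f_\nu\,|\,M_{\px,\ux}\|\le\mu$ only yields $\sup_{x,r}\varrho_\px(\tfrac{1}{\mu+\varepsilon}\cdots)\le 1$ in general). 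As it stands, your $q=\infty$ half is a list of candidate strategies built on an incorrect starting formula, so it constitutes a genuine gap; the $q<\infty$ half stands.
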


\begin{proof}
Using \eqref{def:lpqumod-simple} and \eqref{Lp-t-power}, for constant $q\in(0,\infty)$ we have
\begin{equation*}
\varrho_{\ell_q \left(M_{\px,\ux}\right)}\big( (f_\nu)_\nu\big ) = \sum_{\nu\ge 0} \big\|f_\nu \,|\,M_{\px,\ux}\big\|^q = \Big\| \big(\big\|f_\nu\,|\,M_{\px,\ux}\big\|\big)_\nu \, |\, \ell_q \Big\|^q,
\end{equation*}
from which \eqref{really-iterated} follows, taking into account \eqref{def:lpqunorm} and the homogeneity of $M_{\px,\ux}$- and $\ell_q$-quasinorms.

Consider now the case $q=\infty$. In a sense, the arguments we will use are similar to those mentioned in the proof of Proposition 3.3 in \cite{AlmH10}. However, we give details for completeness. Let $s:=\sup_{\nu\in\Nz} \big\|f_\nu\,|\,M_{\px,\ux}\big\|$ and
$$A:=\left\{\mu>0: \sum_{\nu\geq 0} \inf\left\{\lambda>0:  \sup_{x\in\Rn,r>0} \varrho_{\px}\Big(\tfrac{1}{\mu}\,r^{\frac{n}{u(x)}-\frac{n}{p(x)}}f_\nu \,\chi_{B(x,r)}\Big)\le 1 \right\} \right\}.$$
We want to show that $s=\inf A$ (here we are using the result from Lemma~\ref{lem:inf-sup} with, in particular, $v(y)=1/q(y)=0$ a.e.).

We show first that $\inf A \leq s$ by showing that $s+\varepsilon \in A$ (for $s<\infty$, otherwise there is nothing to prove) for any $\varepsilon>0$. By the alternative definition of $M_{\px,\ux}$ given in Remark~\ref{rem:morrey-alternative} and the monotonicity of the expressions involved, we have, for any $\varepsilon>0$,
$$\sup_{x\in\Rn,r>0} \varrho_{\px}\Big(\frac{r^{\frac{n}{u(x)}-\frac{n}{p(x)}}f_\nu \,\chi_{B(x,r)}}{\varepsilon+\|f_\nu |\, M_{\px,\ux}\|}\Big)\le 1\,, \ \ \ \ \forall \nu\in\Nz,$$
and then also
$$\sup_{x\in\Rn,r>0} \varrho_{\px}\Big(\frac{r^{\frac{n}{u(x)}-\frac{n}{p(x)}}f_\nu \,\chi_{B(x,r)}}{\varepsilon+s}\Big)\le 1\,, \ \ \ \ \forall \nu\in\Nz.$$
So, all the infima in $A$ for $\mu=s+\varepsilon$ are zero and hence $s+\varepsilon\in A$.

Now we show that $s\leq \inf A$ (with $\inf A<\infty$, otherwise it is clear). For any $\mu\in A$ one must have that
\begin{equation}\label{aux-mu}
\sup_{x\in\Rn,r>0} \varrho_{\px}\Big(\tfrac{1}{\mu}\,r^{\frac{n}{u(x)}-\frac{n}{p(x)}}f_\nu \,\chi_{B(x,r)}\Big)\le 1\,, \ \ \ \ \forall \nu\in\Nz,
\end{equation}
as otherwise at least one of the infima in $A$ would be infinite and then $\mu$ could not be in $A$. From \eqref{aux-mu} and the alternative definition of $M_{\px,\ux}$, we conclude that $\big\|f_\nu\,|\,M_{\px,\ux}\big\| \leq \mu$ for all $\nu\in\Nz$. Consequently, $s=\sup_{\nu\in\Nz} \big\|f_\nu\,|\,M_{\px,\ux}\big\| \leq \mu$ and hence $s\leq \inf A$.
\end{proof}

The result above justifies the notation $\ell_\qx\big(M_{\px,\ux}\big)$ even when the space is not iterated and hence the mixed structure prevails.


Next we show that the values of $q(x)$ have no influence when we calculate \eqref{def:lpqumod} for sequences with at most one non-zero entry, as it occurs in the constant exponent situation. This fact was already observed in \cite[Example~3.4]{AlmH10} for the Lebesgue-sequence spaces $\ell_\qx(L_\px)$.

\begin{proposition}\label{pro:noqx}
Let $p,q,u\in\PP$ with $p(x)\leq u(x)$. If $f_{\nu_0}=f$ for some {$f\in\mathcal{M}(\Rn)$} and $\nu_0\in\N_0$, and $f_{\nu}=0$ for all $\nu \neq \nu_0$, then
\begin{equation*}
\big\|(f_\nu)_\nu\,|\,\ell_\qx\big(M_{\px,\ux}\big)\big\| = \big\|f\,|\,M_{\px,\ux}\big\|.
\end{equation*}
\end{proposition}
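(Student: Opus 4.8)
The plan is to reduce the computation of the mixed Morrey-sequence quasinorm for a one-term sequence to the Morrey quasinorm of its single nonzero entry, exploiting the fact observed in the introduction to the section (following \cite[Example~3.4]{AlmH10}) that the exponent $q$ disappears for sequences with a single nonzero coordinate. Concretely, I would start from the definition \eqref{def:lpqumod} and observe that when only $f_{\nu_0}=f$ is nonzero, the sum over $\nu\geq 0$ collapses to the single term indexed by $\nu_0$, namely
$$
\varrho_{\ell_\qx \left(M_{\px,\ux}\right)}\big( (f_\nu)_\nu\big ) = \sup_{x\in\Rn,r>0} \inf\left\{\lambda>0:  \varrho_{\px}\Big(r^{\frac{n}{u(x)}-\frac{n}{p(x)}}f \,\chi_{B(x,r)}/\lambda^{\frac1{\qx}}\Big)\le 1 \right\},
$$
since each of the other terms contributes $\inf\{\lambda>0:\varrho_\px(0)\le 1\}=0$ (here one uses that $\varrho_\px(0)=0$ and the convention $\inf$ over all positive $\lambda$).

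The next step is to remove the $q(x)$-dependence from the inner infimum. I would substitute $\mu=\lambda^{1/q(x)}$ pointwise; more carefully, since $\lambda\mapsto\lambda^{1/q(x)}$ is a strictly increasing bijection of $(0,\infty)$ onto itself (with the convention $\lambda^{1/\infty}=1$ handled by noting that then the dilation does nothing and the infimum is either $0$ or $\infty$ matching the corresponding $L_\px$-case), the infimum $\inf\{\lambda>0:\varrho_\px(g/\lambda^{1/q(x)})\le 1\}$ equals $\big(\inf\{\mu>0:\varrho_\px(g/\mu)\le 1\}\big)^{q(x)}=\|g\,|L_\px\|^{q(x)}$ — but this is not quite what I want because the exponent $q(x)$ still sits there. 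The cleaner route is the one the authors seem to intend via Proposition~\ref{pro:noqx}'s analogue in \cite{AlmH10}: the key point is that for a single function $g$ the quantity $\inf\{\lambda>0:\varrho_\px(g/\lambda^{1/q(x)})\le 1\}$, as a number, equals $\inf\{\lambda>0:\varrho_\px(g/\lambda)\le 1\}=\|g\,|L_\px\|$, because the condition $\varrho_\px(g/\lambda^{1/q(x)})\le 1$ and $\|g\,|L_\px\|\le\lambda^{1/q(x)}$ are equivalent by left-continuity (unit ball property applied to the function $g$ viewed in $L_\px$, noting $1/q(x)>0$), and then $\inf\{\lambda:\lambda^{1/q(x)}\ge\|g\,|L_\px\|\}=\|g\,|L_\px\|$ — wait, that last identity does use $q$. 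Let me instead argue directly: $\varrho_\px(h/\lambda^{1/q(x)})$ is built from integrating $\phi_{p(y)}(|h(y)|/\lambda^{1/q(y)})$, and by the definition of the Morrey quasinorm with the $\chi_{B(x,r)}$-truncation, the identity \eqref{eq:norm-morrey-alt} combined with \eqref{Lp-t-power} is exactly the machinery needed; so I would phrase the inner step as: by the $t$-power rule the single-term semimodular is $\sup_{x,r}\big\|\,r^{n/u(x)-n/p(x)}f\chi_{B(x,r)}\,|L_\px\big\|=\|f\,|M_{\px,\ux}\|$ when $q^+<\infty$ using \eqref{def:lpqumod-simple}, and treat $q^+=\infty$ separately as in Remark~\ref{rem:modular-simpler}.

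Putting this together, once I have shown $\varrho_{\ell_\qx(M_{\px,\ux})}\big((f_\nu)_\nu\big)=\|f\,|M_{\px,\ux}\|$ for the one-term sequence (and likewise with $f$ replaced by $\tfrac1\mu f$, giving $\varrho_{\ell_\qx(M_{\px,\ux})}\big(\tfrac1\mu(f_\nu)_\nu\big)=\tfrac1\mu\|f\,|M_{\px,\ux}\|$ by homogeneity of the Morrey quasinorm), the quasinorm identity \eqref{def:lpqunorm} reads $\big\|(f_\nu)_\nu\,|\,\ell_\qx(M_{\px,\ux})\big\|=\inf\{\mu>0:\tfrac1\mu\|f\,|M_{\px,\ux}\|\le 1\}=\|f\,|M_{\px,\ux}\|$, which is the claim. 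I expect the main obstacle to be the careful handling of the substitution $\lambda\leftrightarrow\lambda^{1/q(x)}$ when $q$ is nonconstant — making sure that taking the infimum over $\lambda$ of a condition on $g/\lambda^{1/q(x)}$ genuinely yields $\|g\,|L_\px\|$ and not $\|g\,|L_\px\|^{q}$ for some $q$ — together with the degenerate cases $q(x)=\infty$ on sets of positive measure and $f$ not a.e.\ finite; the truncation by $\chi_{B(x,r)}$ keeps things in $\mathcal{M}_0$ locally, but these conventions (the $\lambda^{1/\infty}=1$ rule and the $\phi_\qx$-interpretation from Remark~\ref{rem:modular-simpler}) must be invoked explicitly, exactly as was done for $\ell_\qx(L_\px)$ in \cite{AlmH10}.
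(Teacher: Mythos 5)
Your first step (the sum over $\nu$ collapses to the single term $\nu=\nu_0$ because each zero entry contributes an infimum equal to $0$) is fine and matches the paper. The gap is in the second step: you claim that the single-term semimodular
\[
\sup_{x\in\Rn,r>0}\inf\Big\{\lambda>0:\ \varrho_{\px}\Big(r^{\frac{n}{u(x)}-\frac{n}{p(x)}}f\,\chi_{B(x,r)}/\lambda^{\frac1{\qx}}\Big)\le 1\Big\}
\]
equals $\|f\,|M_{\px,\ux}\|$, justified ``by the $t$-power rule'' applied to \eqref{def:lpqumod-simple}. That rule, \eqref{Lp-t-power}, holds only for a \emph{constant} power $t$; with the variable power $\qx$ the identity $\||g|^{\qx}\,|L_{\px/\qx}\|=\|g\,|L_\px\|^{\qx}$ is not even meaningful. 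Worse, your intermediate identity is already false for constant $q\ne 1$: by Proposition~\ref{pro:really-iterated} the semimodular of the one-term sequence is then $\|f\,|M_{\px,\ux}\|^{q}$, not $\|f\,|M_{\px,\ux}\|$. The subsequent appeal to ``homogeneity'' of the semimodular ($\varrho(\tfrac1\mu(f_\nu)_\nu)=\tfrac1\mu\|f\,|M_{\px,\ux}\|$) fails for the same reason --- semimodulars are not positively homogeneous. You sensed the problem yourself (``that last identity does use $q$'') but the route you chose to escape it does not work, and treating only the case $q^+=\infty$ separately does not address it, since the obstruction is the variability of $q$ in general.

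What actually makes the proposition true --- and what the paper exploits --- is that the exact value of the inner infimum is irrelevant: only the condition ``inner infimum $\le 1$'' enters the outer infimum over $\mu$ in \eqref{def:lpqunorm}, and \emph{at the threshold $\lambda=1$} the factor $\lambda^{1/\qx}$ can be controlled uniformly by $\lambda^{1/q^-}$. Concretely, the paper first uses Lemma~\ref{lem:inf-sup} to pull the $\sup_{x,r}$ inside the $\lambda$-infimum, and then proves two inequalities between
\[
a=\inf\Big\{\mu>0:\ \inf\Big\{\lambda>0:\ \sup_{x,r}\varrho_{\px}\Big(\tfrac1\mu\,\tfrac{r^{\frac{n}{u(x)}-\frac{n}{p(x)}}f\chi_{B(x,r)}}{\lambda^{1/\qx}}\Big)\le1\Big\}\le1\Big\}
\]
and $b=\|f\,|M_{\px,\ux}\|$ (via Corollary~\ref{cor:norm-morrey-alt}): for $a\ge b$ one picks $\varepsilon_1,\varepsilon_2$ with $a+\varepsilon>(a+\varepsilon_1)(1+\varepsilon_2)^{1/q^-}$ so that the $\lambda=1+\varepsilon_2$ witness can be absorbed into the outer scale $\mu$; for $a\le b$ one simply takes $\lambda=1$. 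To repair your proof you would need to replace the false pointwise identity by exactly this kind of unit-ball argument (or by an equivalent use of left-continuity of $\varrho_\px$ as $\lambda\to1^+$); as written, the central step does not hold.
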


\begin{proof}
Notice that the result is immediate when $q$ is constant, since the space is iterated in that case. For general $q\in\PP$, recalling \eqref{def:lpqunorm}, we evaluate the infimum
\[
a:=\inf\left\{\mu>0: \sup_{x\in\Rn,r>0}
\inf\left\{\lambda>0:  \varrho_{\px}\left(\tfrac{1}{\mu} \,\frac{r^{\frac{n}{u(x)}-\frac{n}{p(x)}}f \,\chi_{B(x,r)}}{\lambda^{1/\qx}}\right)\le 1 \right\}\leq 1\right\}.
\]
By Lemma~\ref{lem:inf-sup} with $v(y)=1/q(y)$ and $g(x,r,y)=\tfrac{1}{\mu} r^{\frac{n}{u(x)}-\frac{n}{p(x)}}f(y) \,\chi_{B(x,r)}(y)$, we have
\[
a=\inf\left\{\mu>0: \inf\left\{\lambda>0:  \sup_{x\in\Rn,r>0} \varrho_{\px}\left(\tfrac{1}{\mu} \,\frac{r^{\frac{n}{u(x)}-\frac{n}{p(x)}}f \,\chi_{B(x,r)}}{\lambda^{1/\qx}}\right)\le 1 \right\}\leq 1\right\}=:\inf A.
\]
We show that the previous infimum equals
\[
b:=\inf \left\{\mu>0: \sup_{x\in\Rn,r>0}\, \varrho_{\px}\left(\tfrac{1}{\mu}\, r^{\frac{n}{u(x)}-\frac{n}{p(x)}}f \,\chi_{B(x,r)}\right)\le 1 \right\}=:\inf B,
\]
which is precisely $\big\|f\,|\,M_{\px,\ux}\big\|$ by Corollary~\ref{cor:norm-morrey-alt}.

Let us prove that $a \geq b$ (where we can assume $a<\infty$ without loss of generality), which is equivalent to prove that $a+\varepsilon \geq b$ for all $\varepsilon>0$. It suffices to show that $a+\varepsilon \in B$ (for arbitrary $\varepsilon>0$). For any $\varepsilon_1>0$, we have
$$
\inf\left\{\lambda>0:  \sup_{x\in\Rn,r>0} \varrho_{\px}\left(\frac{1}{a+\varepsilon_1} \,\frac{r^{\frac{n}{u(x)}-\frac{n}{p(x)}}f \,\chi_{B(x,r)}}{\lambda^{1/\qx}}\right)\le 1 \right\}\leq 1.
$$
Therefore, for any $\varepsilon_2>0$,
$$
\sup_{x\in\Rn,r>0} \varrho_{\px}\left(\frac{1}{a+\varepsilon_1} \, \frac{r^{\frac{n}{u(x)}-\frac{n}{p(x)}}f \,\chi_{B(x,r)}}{(1+\varepsilon_2)^{1/\qx}}\right)\le 1.
$$
Given any $\varepsilon >0$, consider $\varepsilon_1, \varepsilon_2>0$ such that $a+\varepsilon> (a+\varepsilon_1)(1+\varepsilon_2)^{1/q^-}$. Then $a+\varepsilon> (a+\varepsilon_1)(1+\varepsilon_2)^{1/\qx}$ and hence
$$
\sup_{x\in\Rn,r>0} \varrho_{\px}\left(\frac{1}{a+\varepsilon} \, r^{\frac{n}{u(x)}-\frac{n}{p(x)}} f \,\chi_{B(x,r)}\right)\le 1.
$$
This implies $a+\varepsilon \in B$, as claimed.

Next we prove $a\leq b$ by showing that $A \supset B$. Given any $\mu\in B$,
$$
\sup_{x\in\Rn,r>0}\, \varrho_{\px}\left(\tfrac{1}{\mu}\, \frac{r^{\frac{n}{u(x)}-\frac{n}{p(x)}}f \,\chi_{B(x,r)}}{1^{1/\qx}}\right)\le 1.
$$
Therefore
$$
\inf\left\{\lambda>0:  \sup_{x\in\Rn,r>0} \varrho_{\px}\left(\frac{1}{\mu} \frac{r^{\frac{n}{u(x)}-\frac{n}{p(x)}}f \,\chi_{B(x,r)}}{\lambda^{1/\qx}}\right)\le 1 \right\}\leq 1,
$$
and hence $\mu\in A$.
\end{proof}

The next result shows that if $u(x)=p(x)$ almost everywhere, the variable exponent mixed Morrey-sequence space coincides with the mixed space $\ell_\qx(L_\px)$ introduced in \cite{AlmH10}.

\begin{proposition}
If $p,q\in\PP$ then
\begin{equation*}
\varrho_{\ell_\qx\big(M_{\px,\px}\big)}\big( (f_\nu)_\nu\big ) = \varrho_{\ell_\qx(L_\px)}\big( (f_\nu)_\nu\big)
\end{equation*}
for every sequence $(f_\nu)_\nu \subset \mathcal{M}(\Rn)$. Consequently,
$$ \ell_\qx\big(M_{\px,\px}\big) = \ell_\qx(L_\px) \ \ \ \ \ \ \ \mbox{(with equal quasinorms)}.
$$
\end{proposition}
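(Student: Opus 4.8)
The plan is to peel the identity apart term by term in the sum over $\nu$ and reduce each term to the identities already established in Lemmas~\ref{lem:inf-sup} and~\ref{lem:sup}. The first observation is that when $u(x)=p(x)$ almost everywhere the Morrey weight $r^{\frac{n}{u(x)}-\frac{n}{p(x)}}$ is identically $1$, so that
\[
\varrho_{\ell_\qx\big(M_{\px,\px}\big)}\big( (f_\nu)_\nu\big )=\sum_{\nu\ge 0}\sup_{x\in\Rn,r>0}\inf\left\{\lambda>0:\varrho_{\px}\Big(f_\nu\,\chi_{B(x,r)}/\lambda^{\frac1{\qx}}\Big)\le 1\right\}.
\]

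Next, fix $\nu\in\Nz$ and apply Lemma~\ref{lem:inf-sup} with $v(\cdot)=1/q(\cdot)$ and $g(x,r,y)=f_\nu(y)\,\chi_{B(x,r)}(y)$; the hypotheses are met since $q\in\PP$ is essentially bounded away from zero (so $1/q(\cdot)$ is measurable and finite-valued, hence maps into $[0,\infty)$, with the value $0$ where $q=\infty$, consistent with the convention $\lambda^{1/\infty}=1$) and since $f_\nu\in\mathcal{M}(\Rn)$ together with the measurability of $\chi_{B(x,r)}$ makes $g(x,r,\cdot)$ measurable for each $(x,r)$. This interchanges the outer $\sup_{x,r}$ with the $\inf_\lambda$, turning the $\nu$-th term into $\inf\{\lambda>0:\sup_{x,r}\varrho_{\px}(f_\nu\,\chi_{B(x,r)}/\lambda^{1/\qx})\le 1\}$. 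Then, for each fixed $\lambda>0$, apply Lemma~\ref{lem:sup} to the function $g=f_\nu/\lambda^{1/q(\cdot)}\in\mathcal{M}(\Rn)$, noting that $g\,\chi_{B(x,r)}=f_\nu\,\chi_{B(x,r)}/\lambda^{1/q(\cdot)}$; this yields $\sup_{x,r}\varrho_{\px}(f_\nu\,\chi_{B(x,r)}/\lambda^{1/\qx})=\varrho_{\px}(f_\nu/\lambda^{1/\qx})$, so the $\nu$-th term equals $\inf\{\lambda>0:\varrho_{\px}(f_\nu/\lambda^{1/\qx})\le 1\}$, which is precisely the $\nu$-th summand of $\varrho_{\ell_\qx(L_\px)}\big((f_\nu)_\nu\big)$ as defined in \eqref{def:lpqmod}. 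Summing over $\nu\ge 0$ gives the asserted semimodular identity.

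Finally, the coincidence of the spaces with equal quasinorms is immediate: both $\ell_\qx\big(M_{\px,\px}\big)$ and $\ell_\qx(L_\px)$ are built from their respective semimodulars in the same way, i.e.\ as the set of sequences for which $\varrho(\mu(f_\nu)_\nu)<\infty$ for some $\mu>0$, with quasinorm $\inf\{\mu>0:\varrho(\tfrac1\mu(f_\nu)_\nu)\le 1\}$; since the two semimodulars agree on all sequences, the underlying sets and the quasinorms agree as well. There is no genuine obstacle here — the only point needing a little care is verifying that the measurability hypotheses of Lemmas~\ref{lem:inf-sup} and~\ref{lem:sup} are satisfied in the present situation, which is routine.
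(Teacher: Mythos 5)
Your proposal is correct and follows essentially the same route as the paper: drop the trivial weight $r^{\frac{n}{u(x)}-\frac{n}{p(x)}}\equiv 1$, interchange $\sup_{x,r}$ and $\inf_\lambda$ via Lemma~\ref{lem:inf-sup} with $v=1/q$, remove the supremum over balls via Lemma~\ref{lem:sup}, and identify the result with \eqref{def:lpqmod}. The concluding remark about equal quasinorms is also the intended (immediate) consequence.
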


\begin{proof}
The coincidence above follows from Lemmas~\ref{lem:inf-sup} and \ref{lem:sup}:
\begin{eqnarray*}
\varrho_{\ell_\qx\big(M_{\px,\px}\big)}\big( (f_\nu)_\nu\big) & = & \sum_{\nu\geq 0} \sup_{x\in\Rn, r>0} \, \inf\left\{\lambda>0: \varrho_{\px}\left(\frac{f_\nu\,\chi_{B(x,r)}}{\lambda^{1/\qx}}\right)\leq 1\right\}\\
& = & \sum_{\nu\geq 0} \inf\left\{\lambda>0: \sup_{x\in\Rn, r>0} \varrho_{\px}\left(\frac{f_\nu\,\chi_{B(x,r)}}{\lambda^{1/\qx}}\right)\leq 1\right\}\\
& = & \sum_{\nu\geq 0} \inf\left\{\lambda>0: \varrho_{\px}\left(\frac{f_\nu}{\lambda^{1/\qx}}\right)\leq 1\right\}\\
& = & \varrho_{\ell_\qx(L_\px)}\big( (f_\nu)_\nu\big).
\end{eqnarray*}

\end{proof}

It is time to show that \eqref{def:lpqumod} really defines a semimodular.

\begin{theorem}\label{theo:modular}
Let $p,q,u\in\PP$ with $p(x)\leq u(x)$. Then $\varrho_{\ell_\qx \left(M_{\px,\ux}\right)}$ is a left-continuous semimodular. Moreover, it is a modular if $q^+<\infty$.
\end{theorem}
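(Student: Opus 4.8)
The plan is to verify the defining properties of a (semi)modular from Definition~\ref{def:modular} directly for $\varrho := \varrho_{\ell_\qx(M_{\px,\ux})}$, reducing each to the already-established facts that $\varrho_\px$ is a left-continuous semimodular on $\mathcal{M}_0(\Rn)$ and that $\|\cdot\,|\,M_{\px,\ux}\|$ and $\|\cdot\,|\,L_\px\|$ behave well under scaling. Throughout I would write $\varrho(\,(f_\nu)_\nu\,) = \sum_{\nu\ge 0} m_\nu$, where
$$
m_\nu := \sup_{x\in\Rn, r>0} \inf\left\{\lambda>0 : \varrho_\px\big(r^{\frac{n}{u(x)}-\frac{n}{p(x)}} f_\nu\,\chi_{B(x,r)}/\lambda^{1/\qx}\big)\le 1\right\},
$$
and by Lemma~\ref{lem:inf-sup} (with $v=1/q$) interchange the outer $\sup$ and $\inf$ whenever convenient. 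Properties (i) and (iii) are immediate: if $(f_\nu)_\nu = 0$ then each $m_\nu = 0$, and replacing $f_\nu$ by $\mu f_\nu$ with $|\mu|=1$ leaves $\varrho_\px$ unchanged by property (iii) for $\varrho_\px$, hence leaves every $m_\nu$ unchanged. For (ii), suppose $\varrho(\lambda(f_\nu)_\nu)=0$ for all $\lambda>0$; then for each fixed $\nu$ and each $\lambda>0$ the inner supremum-infimum vanishes, which by the alternative Morrey description (Corollary~\ref{cor:norm-morrey-alt}, or Remark~\ref{rem:morrey-alternative}) forces $\|\lambda f_\nu\,|\,M_{\px,\ux}\| = 0$ for all $\lambda$, and a further appeal to property (ii) for the semimodular $\varrho_\px$ applied on each ball gives $f_\nu = 0$ a.e. for every $\nu$.

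The substantive point is left-continuity, property (v): I must show $\lim_{\theta\to 1^-}\varrho(\theta(f_\nu)_\nu) = \varrho((f_\nu)_\nu)$. Since $\theta\mapsto\varrho(\theta(f_\nu)_\nu)$ is nondecreasing (each $m_\nu$ is, being built from the monotone $\varrho_\px$), the limit exists and is $\le\varrho((f_\nu)_\nu)$; the task is the reverse inequality. I would first handle a single term, proving that $\theta\mapsto m_\nu(\theta f_\nu)$ is left-continuous. Fixing $\nu$ and writing $g_{x,r} := r^{\frac{n}{u(x)}-\frac{n}{p(x)}} f_\nu\,\chi_{B(x,r)}$, one has after the inf/sup interchange $m_\nu(\theta f_\nu) = \inf\{\lambda>0 : \sup_{x,r}\varrho_\px(\theta g_{x,r}/\lambda^{1/\qx})\le 1\}$, i.e.\ $m_\nu(\theta f_\nu) = \|\theta f_\nu\,|\,M_{\px,\ux}\|^{?}$-type quantity; more precisely, using the power rescaling and the $q$-homogenization, $m_\nu$ coincides with the $M$-norm construction whose left-continuity follows from that of the semimodular $\varrho_{\px,\ux}$ in Remark~\ref{rem:morrey-alternative}. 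Concretely: if $t < \varrho_\px$-threshold value $\sup_{x,r}\varrho_\px(g_{x,r}/\lambda^{1/\qx})$ at the critical $\lambda$, pick $(x,r)$ with $\varrho_\px(g_{x,r}/\lambda^{1/\qx})$ close to that value and use left-continuity of $\varrho_\px$ in its scalar argument to conclude $\varrho_\px(\theta g_{x,r}/\lambda^{1/\qx})\to\varrho_\px(g_{x,r}/\lambda^{1/\qx})$ as $\theta\to 1^-$. Once each $m_\nu(\theta f_\nu)\uparrow m_\nu(f_\nu)$, the monotone convergence theorem for series of nonnegative terms (i.e.\ interchanging $\lim_{\theta\to1^-}$ with $\sum_\nu$, legitimate by monotonicity in $\theta$) yields $\lim_{\theta\to1^-}\sum_\nu m_\nu(\theta f_\nu) = \sum_\nu m_\nu(f_\nu) = \varrho((f_\nu)_\nu)$.

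For the modular assertion, property (iv) under the extra hypothesis $q^+<\infty$: here I would use the simplified form \eqref{def:lpqumod-simple}, so that $m_\nu = \sup_{x,r}\big\|\,\phi_\qx(r^{\frac{n}{u(x)}-\frac{n}{p(x)}}|f_\nu|\,\chi_{B(x,r)})\,|\,L_{\px/\qx}\big\| = \|f_\nu\,|\,M_{\px,\ux}\|^{q(\cdot)}$-type expression — precisely, by \eqref{Lp-t-power} and the $t$-power rule for the $M$-quasinorm, one identifies $m_\nu$ with a quantity that vanishes iff $\|f_\nu\,|\,M_{\px,\ux}\|=0$. If $\varrho((f_\nu)_\nu)=0$ then every $m_\nu=0$, hence $\|f_\nu\,|\,M_{\px,\ux}\|=0$ for all $\nu$; since $\|\cdot\,|\,M_{\px,\ux}\|$ is a genuine norm-vanishing $\Rightarrow$ the function is $0$ a.e.\ (finiteness of $q^+$ guarantees $\varrho_\px$ restricted to bounded sets is a modular in the relevant sense, equivalently $\phi_{p(x)}(t)=0\Rightarrow t=0$ when $p(x)<\infty$; and where $p(x)=\infty$ one uses that $q^+<\infty$ forces $p(x)=\infty$ impossible to evade via the $q$-power... more carefully, $\|f_\nu\,|\,M_{\px,\ux}\|=0$ already gives $f_\nu=0$ a.e.\ because $\|\cdot\,|\,L_\px\|$ separates points), we get $(f_\nu)_\nu = 0$. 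The main obstacle I anticipate is the left-continuity step: one must carefully justify pulling the limit through both the supremum over $(x,r)$ and the outer sum, and the cleanest route is to first prove it termwise by reducing $m_\nu$ to the Morrey-semimodular $\varrho_{\px,\ux}$ of Remark~\ref{rem:morrey-alternative} (whose left-continuity is asserted there) and only then apply monotone convergence to the series.
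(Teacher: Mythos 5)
Your proposal is correct, and on the one substantive point of the theorem --- left-continuity --- it takes a genuinely different route from the paper. The paper argues by contradiction: assuming a gap $\varrho_{\ell_\qx(M_{\px,\ux})}\big((f_\nu)_\nu\big)-\varrho_{\ell_\qx(M_{\px,\ux})}\big(\mu^\ast(f_\nu)_\nu\big)\ge\varepsilon$ persisting for $\mu^\ast$ arbitrarily close to $1$, it picks near-optimal centres and radii $(x_\nu,r_\nu)$ and transfers the gap to the sequence $\big(r_\nu^{\frac{n}{u(x_\nu)}-\frac{n}{p(x_\nu)}}f_\nu\chi_{B(x_\nu,r_\nu)}\big)_\nu$, contradicting the left-continuity of $\varrho_{\ell_\qx(L_\px)}$ from \cite{AlmH10}; it also only details the case where the semimodular is finite. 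You instead prove left-continuity termwise: after the sup/inf interchange of Lemma~\ref{lem:inf-sup}, each summand $m_\nu(\theta f_\nu)=\inf\{\lambda>0:\Phi_\nu(\theta,\lambda)\le1\}$ with $\Phi_\nu(\theta,\lambda)=\sup_{x,r}\varrho_\px\big(\theta g_{x,r}/\lambda^{1/\qx}\big)$ is nondecreasing and left-continuous in $\theta$ --- because a supremum of functions nondecreasing and left-continuous in $\theta$ is again left-continuous (suprema commute; this is exactly the left-continuity of $\varrho_{\px,\ux}$ asserted in Remark~\ref{rem:morrey-alternative}) and because $\Phi_\nu$ is nonincreasing in $\lambda$ --- and then you interchange $\lim_{\theta\to1^-}$ with $\sum_\nu$ by monotone convergence. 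This is self-contained (no appeal to the mixed Lebesgue-sequence result) and handles the finite and infinite cases uniformly, which is a genuine gain in cleanliness. The remaining properties are treated essentially as in the paper: (i), (iii) are immediate; for (ii) the paper routes through Proposition~\ref{pro:noqx} and single-entry sequences, while your direct appeal to property (ii) of $\varrho_\px$ on each ball (take $\lambda=1$ in the inner infimum, so that $\varrho_\px(\beta g_{x,r})\le1$ for all $\beta>0$) is an equivalent shortcut; for (iv) both use the bounded-$q$ form \eqref{def:lpqumod-simple}. Two cosmetic cautions: the identifications of $m_\nu$ with ``$\|f_\nu\,|\,M_{\px,\ux}\|^{q(\cdot)}$-type'' quantities are not literal when $q$ is non-constant (only the direct arguments you give alongside them carry the proof), and the parenthetical about $p(x)=\infty$ in your treatment of (iv) is a red herring --- what matters is solely that $q^+<\infty$ makes $\phi_{q(y)}$ a genuine power, so its vanishing forces the function to vanish, whereas for $q\equiv\infty$ the infima in \eqref{def:lpqumod} vanish on nontrivial sequences and (iv) genuinely fails.
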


\begin{proof}
Recalling Definition \ref{def:modular}, we need to check properties (i)-(v). Properties (i) and (iii) are clear. Property (ii) can be shown using the result given in Proposition~\ref{pro:noqx}. Indeed, given any $\beta>0$ and any sequence {$(f_\nu)_\nu \subset \mathcal{M}(\Rn)$} such that $\varrho_{\ell_\qx(M_{\px,\ux})}\big( \beta(f_\nu)_\nu\big)=0$, we have
$$ 0= \varrho_{\ell_\qx(M_{\px,\ux})}\big( \beta(f_\nu)_\nu\big) \geq \varrho_{\ell_\qx(M_{\px,\ux})}\big( (0,\ldots,0,\beta f_{\nu_0},0,\ldots)\big) \geq 0$$
for each $\nu_0\in\Nz$. Thus
$$\left\|f_{\nu_0}\,|\, M_{\px,\ux}\right\| = \left\|(0,\ldots,0,f_{\nu_0},0,\ldots)\,|\, \ell_\qx(M_{\px,\ux})\right\| = 0$$
and hence $f_{\nu_0}=0$ almost everywhere. To check property (iv), just use the assumption that $q$ is bounded and the characterization \eqref{def:lpqumod-simple}.

Let us move now to the proof of the left-continuity (v). We give the details in the case $\varrho_{\ell_\qx(M_{\px,\ux})}\big( (f_\nu)_\nu\big)<\infty$ only. The arguments can be adapted to treat the other case.
Suppose there exists $\varepsilon>0$ and a sequence $(f_\nu)_\nu\subset \mathcal{M}(\Rn)$ such that for every $\mu\in(0,1)$ we have
$$ \varrho_{\ell_\qx(M_{\px,\ux})} \big((f_\nu)_\nu\big)- \varrho_{\ell_\qx(M_{\px,\ux})} \big(\mu^\ast(f_\nu)_\nu\big) \geq \varepsilon, \ \ \ \ \text{for some} \ \ \mu^\ast\in(\mu,1).$$
Then, for each $\nu\in\Nz$, we find $x_\nu\in\Rn$ and $r_\nu>0$ such that
\begin{eqnarray*}
\lefteqn{\sum_{\nu\ge 0} \,\inf\left\{\lambda>0:  \varrho_{\px}\Big(r_\nu^{\frac{n}{u(x_\nu)}-\frac{n}{p(x_\nu)}}\,f_\nu \,\chi_{B(x_\nu,r_\nu)}/\lambda^{\frac1{\qx}}\Big)\le 1 \right\} }\\
& \geq & \sum_{\nu\ge 0} \left[ \sup_{x\in\Rn, r>0} \inf\left\{\lambda>0:  \varrho_{\px}\Big(r^{\frac{n}{u(x)}-\frac{n}{p(x)}}\,f_\nu \,\chi_{B(x,r)}/\lambda^{\frac1{\qx}}\Big)\le 1 \right\} - \frac{\varepsilon}{2^{\nu}}\right]\\
& = & \varrho_{\ell_\qx(M_{\px,\ux})} \big((f_\nu)_\nu\big) - 2\varepsilon \geq \varrho_{\ell_\qx(M_{\px,\ux})} \big(\mu^\ast(f_\nu)_\nu\big) - \varepsilon.
\end{eqnarray*}
But this implies
$$\varrho_{\ell_\qx(L_\px)} \left(\Big(r_\nu^{\frac{n}{u(x_\nu)}-\frac{n}{p(x_\nu)}}\,f_\nu \,\chi_{B(x_\nu,r_\nu)}\Big)_\nu\right) \geq \varrho_{\ell_\qx(L_\px)} \left(\mu^\ast\Big(r_\nu^{\frac{n}{u(x_\nu)}-\frac{n}{p(x_\nu)}}\,f_\nu \,\chi_{B(x_\nu,r_\nu)}\Big)_\nu\right) - \varepsilon $$
which would contradict the left-continuity of the semimodular  $\varrho_{\ell_\qx(L_\px)}$ (see \cite[Proposition~3.5]{AlmH10}).
\end{proof}

\begin{remark}
We take the opportunity to fix the formulation of \cite[Proposition~3.5(a)]{AlmH10}. As we can conclude from the explanation above, the claim there holds if $q^+<\infty$.
\end{remark}

For constant exponents the connection between the semimodular and the quasinorm is immediate. That connection is not so clear when we deal with non-constant indices. The following inequality gives some information on the estimation of the functional \eqref{def:lpqunorm} by the semimodular \eqref{def:lpqumod}. This will be useful in Section~\ref{sec:convolution} below.

\begin{lemma}\label{lem:norm-mod}
Let $p,q,u\in\PP$ with $p(x)\leq u(x)$ and $q^-<\infty$. If $q^+<\infty$ or $\varrho_{\ell_\qx \left(M_{\px,\ux}\right)} \big((f_\nu)_\nu\big)>0$, then
\begin{equation*}
\left\|(f_\nu)_\nu\,|\, \ell_\qx\big(M_{\px,\ux}\big)\right\| \le \max \left\{ \varrho_{\ell_\qx \left(M_{\px,\ux}\right)} \big((f_\nu)_\nu\big)^{\frac{1}{q^-}}, \varrho_{\ell_\qx \left(M_{\px,\ux}\right)} \big((f_\nu)_\nu\big)^{\frac{1}{q^+}} \right\}.
\end{equation*}
\end{lemma}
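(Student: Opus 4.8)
The plan is to exploit the unit ball property for the semimodular $\varrho:=\varrho_{\ell_\qx(M_{\px,\ux})}$ together with a homogeneity-type estimate relating $\varrho(t(f_\nu)_\nu)$ to $\varrho((f_\nu)_\nu)$ for $t\in(0,1)$. Since the quasinorm is defined as $\|(f_\nu)_\nu\,|\,\ell_\qx(M_{\px,\ux})\| = \inf\{\mu>0:\varrho(\tfrac1\mu(f_\nu)_\nu)\le 1\}$, it suffices to exhibit a concrete admissible $\mu$; writing $\varrho_0:=\varrho((f_\nu)_\nu)$ and $\mu := \max\{\varrho_0^{1/q^-},\varrho_0^{1/q^+}\}$, I want to show $\varrho(\tfrac1\mu(f_\nu)_\nu)\le 1$. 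The first step is therefore to establish the scaling inequality: for $0<t\le 1$,
\begin{equation*}
\varrho_{\ell_\qx(M_{\px,\ux})}\big(t(f_\nu)_\nu\big) \le \max\{t^{q^-},t^{q^+}\}\,\varrho_{\ell_\qx(M_{\px,\ux})}\big((f_\nu)_\nu\big) = t^{q^-}\,\varrho_{\ell_\qx(M_{\px,\ux})}\big((f_\nu)_\nu\big),
\end{equation*}
the last equality because $t\le 1$ forces $t^{q^-}\ge t^{q^+}$.

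To prove this scaling inequality I would work term by term in the sum \eqref{def:lpqumod}, and inside each term term-by-term in the supremum over $(x,r)$. Fix $\nu$, $x$, $r$ and abbreviate $g := r^{\frac{n}{u(x)}-\frac{n}{p(x)}}f_\nu\,\chi_{B(x,r)}$ and $I(g):=\inf\{\lambda>0:\varrho_\px(g/\lambda^{1/\qx})\le 1\}$. I claim $I(tg)\le t^{q^-}I(g)$ for $t\in(0,1)$. Indeed, if $\varrho_\px(g/\lambda^{1/\qx})\le 1$ then, since $\phi_{p(y)}$ is nondecreasing in its argument and $t\,\lambda^{1/q(y)}/(t^{q^-}\lambda)^{1/q(y)} = t^{1-q^-/q(y)}\le 1$ for a.e.\ $y$ (because $t\le 1$ and $q^-/q(y)\le 1$), we get $|t\,g(y)|/(t^{q^-}\lambda)^{1/q(y)} \le |g(y)|/\lambda^{1/q(y)}$ pointwise, whence $\varrho_\px(tg/(t^{q^-}\lambda)^{1/\qx})\le\varrho_\px(g/\lambda^{1/\qx})\le 1$, giving $I(tg)\le t^{q^-}\lambda$; taking the infimum over admissible $\lambda$ yields $I(tg)\le t^{q^-}I(g)$. (When $q^+=\infty$ one invokes Remark~\ref{rem:modular-simpler}/the convention $\lambda^{1/\infty}=1$; under the hypothesis $q^-<\infty$ the factor $t^{q^-}$ still makes sense, and on the set where $q(y)=\infty$ the quantity $\phi_\infty$ is unchanged under the $\lambda$-scaling, so the pointwise domination still holds because multiplying by $t\le 1$ only decreases the argument.) Summing over $\nu$ after taking suprema over $(x,r)$ delivers the scaling inequality.

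With the scaling inequality in hand, the conclusion is immediate: taking $t = 1/\mu$ — which lies in $(0,1]$ precisely when $\mu\ge 1$, i.e.\ when $\varrho_0\ge 1$ (via $\mu=\varrho_0^{1/q^-}$), and when $\varrho_0<1$ one instead has $\mu=\varrho_0^{1/q^+}<1$ so $1/\mu>1$ and a \emph{different} monotonicity branch is needed — I would split into the two cases $\varrho_0\ge 1$ and $0<\varrho_0<1$. If $\varrho_0\ge 1$: $\mu=\varrho_0^{1/q^-}\ge 1$, so $t=1/\mu\le 1$, and $\varrho(\tfrac1\mu(f_\nu)_\nu)\le (1/\mu)^{q^-}\varrho_0 = \varrho_0^{-1}\varrho_0 = 1$, so $\mu$ is admissible and $\|(f_\nu)_\nu\,|\,\ell_\qx(M_{\px,\ux})\|\le\mu$. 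If $0<\varrho_0<1$: here $\mu=\varrho_0^{1/q^+}<1$; I use instead the companion inequality $\varrho(s(f_\nu)_\nu)\le s^{q^+}\varrho_0$ for $s\ge 1$ — proved exactly as above but now with $s\ge1$ forcing $s^{q^+}\ge s^{q^-}$ and the pointwise estimate $|s\,g(y)|/(s^{q^+}\lambda)^{1/q(y)}\le|g(y)|/\lambda^{1/q(y)}$ since $s^{1-q^+/q(y)}\le 1$ — to get, with $s=1/\mu>1$, $\varrho(\tfrac1\mu(f_\nu)_\nu)\le(1/\mu)^{q^+}\varrho_0 = \varrho_0^{-1}\varrho_0 = 1$, again admissible.

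The main obstacle, and the reason the hypothesis ``$q^+<\infty$ or $\varrho_0>0$'' is needed, is the degenerate case $\varrho_0 = 0$ together with $q^+=\infty$: then $\mu = \max\{0,0^{1/\infty}\}$ is ill-defined (or one would want $\mu=0$, claiming the quasinorm vanishes), but when $q$ is unbounded $\varrho_0=0$ does \emph{not} force $(f_\nu)_\nu=0$, so the quasinorm need not be zero and the asserted bound can fail; excluding this case is exactly what the hypothesis does. I would also take care at the boundary $\varrho_0=1$ (both branches give $\mu=1$, consistent) and note that left-continuity of $\varrho$ (Theorem~\ref{theo:modular}) guarantees the infimum defining the quasinorm is actually attained in the relevant sense, though for this one-directional estimate producing a single admissible $\mu$ is all that is required. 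The routine verifications are the pointwise inequalities $t^{1-q^-/q(y)}\le 1$ and $s^{1-q^+/q(y)}\le 1$ and the monotonicity of $\phi_{p(y)}$, which I would state but not belabor.
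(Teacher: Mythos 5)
Your core argument is sound and is essentially the route the paper sketches: you exhibit $\mu=\max\{\varrho_0^{1/q^-},\varrho_0^{1/q^+}\}$ as an admissible value in \eqref{def:lpqunorm} by proving the two scaling inequalities $\varrho_{\ell_\qx(M_{\px,\ux})}(t(f_\nu)_\nu)\le t^{q^-}\varrho_0$ for $t\le 1$ and $\varrho_{\ell_\qx(M_{\px,\ux})}(s(f_\nu)_\nu)\le s^{q^+}\varrho_0$ for $s\ge 1$, and both are correctly reduced to the pointwise estimates $t^{1-q^-/q(y)}\le 1$ and $s^{1-q^+/q(y)}\le 1$ inside each infimum of \eqref{def:lpqumod}; passing to the supremum over $(x,r)$ and summing over $\nu$ is legitimate.

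Two loose ends remain. First, your case split ($\varrho_0\ge 1$ versus $0<\varrho_0<1$) omits the case $\varrho_0=0$, which \emph{is} permitted by the hypotheses when $q^+<\infty$ and in which the assertion is that the quasinorm vanishes. The paper covers this by invoking that $\varrho_{\ell_\qx(M_{\px,\ux})}$ is a modular when $q^+<\infty$ (Theorem~\ref{theo:modular}), so $\varrho_0=0$ forces $(f_\nu)_\nu=0$; alternatively, your own companion inequality already gives $\varrho_{\ell_\qx(M_{\px,\ux})}(s(f_\nu)_\nu)\le s^{q^+}\cdot 0\le 1$ for every $s\ge 1$, so every $1/s$ is admissible and the quasinorm is $0$. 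Second, in the subcase $q^+=\infty$, $0<\varrho_0<1$ you assert $\mu=\varrho_0^{1/q^+}<1$ and compute $\mu^{q^+}=\varrho_0$; with $q^+=\infty$ one has $\varrho_0^{1/q^+}=1$, hence $\mu=1$ and that computation does not apply, but the conclusion is then immediate from $\varrho_{\ell_\qx(M_{\px,\ux})}((f_\nu)_\nu)=\varrho_0\le 1$, so this is only a bookkeeping slip. With these two points patched the proof is complete and matches the paper's intended argument.
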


The proof consists essentially in showing that the right-hand side above is, when positive, one of the $\mu$'s in the defining formula~\eqref{def:lpqunorm} and using also the fact that the semimodular being considered here is indeed a modular when $q^+<\infty$ (cf. Theorem~\ref{theo:modular}).


Finally, we will show that \eqref{def:lpqunorm} really defines a quasinorm in the space $\ell_\qx\big(M_{\px,\ux}\big)$. Since the main issue here is the proof of the (quasi)triangle inequality, for the sake of intelligibility we prove the other quasinorm properties in a separate lemma.

\begin{lemma}\label{lem:norm-part1}
Let $p,q,u\in\PP$ with $p(x)\leq u(x)$. Then \eqref{def:lpqunorm} defines a homogeneous modular in {the vector space of all sequences contained in $\mathcal{M}_0(\Rn)$}.
\end{lemma}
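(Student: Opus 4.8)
The plan is to verify for the functional defined in \eqref{def:lpqunorm} the four axioms (i)--(iv) of a modular from Definition~\ref{def:modular}, together with positive homogeneity $\|\lambda(f_\nu)_\nu\,|\,\ell_\qx(M_{\px,\ux})\|=|\lambda|\,\|(f_\nu)_\nu\,|\,\ell_\qx(M_{\px,\ux})\|$. A useful preliminary observation is that, once homogeneity and (iv) are established, axioms (ii) and (iii) come for free: (iii) is homogeneity with $|\lambda|=1$, and (ii) follows from homogeneity together with (iv). Thus the substantive work reduces to three points: $\|0\,|\,\ell_\qx(M_{\px,\ux})\|=0$, homogeneity, and the implication $\|(f_\nu)_\nu\,|\,\ell_\qx(M_{\px,\ux})\|=0\Rightarrow(f_\nu)_\nu=0$. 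Throughout I would use freely that $\varrho_{\ell_\qx(M_{\px,\ux})}$ is a left-continuous semimodular (Theorem~\ref{theo:modular}) --- in particular its properties (i) and (iii) --- and the trivial monotonicity of \eqref{def:lpqumod} under deleting entries of the sequence, all the summands being nonnegative. (Recall that a modular is allowed to take the value $\infty$, so no finiteness needs to be proved here.)

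The equality $\|0\,|\,\ell_\qx(M_{\px,\ux})\|=0$ is immediate, since $\varrho_{\ell_\qx(M_{\px,\ux})}(\tfrac{1}{\mu}\cdot 0)=0\le 1$ for every $\mu>0$. For homogeneity, the case $\lambda=0$ is trivial (with the convention $0\cdot\infty=0$ and the previous point), and for $\lambda\neq 0$ property (iii) of the semimodular gives $\varrho_{\ell_\qx(M_{\px,\ux})}\!\left(\tfrac{\lambda}{\mu}(f_\nu)_\nu\right)=\varrho_{\ell_\qx(M_{\px,\ux})}\!\left(\tfrac{|\lambda|}{\mu}(f_\nu)_\nu\right)$ for all $\mu>0$; the change of variable $\mu=|\lambda|\tau$ in the infimum defining $\|\lambda(f_\nu)_\nu\,|\,\ell_\qx(M_{\px,\ux})\|$ then factors out $|\lambda|$. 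Only the internal structure of $\varrho_{\ell_\qx(M_{\px,\ux})}$ via property (iii) is used here, not its explicit form.

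For the modular property (iv), suppose $\|(f_\nu)_\nu\,|\,\ell_\qx(M_{\px,\ux})\|=0$; then there are $\mu_k\to 0^+$ with $\varrho_{\ell_\qx(M_{\px,\ux})}\!\left(\tfrac{1}{\mu_k}(f_\nu)_\nu\right)\le 1$. Fix $\nu_0\in\Nz$ and discard from the sum \eqref{def:lpqumod} all summands with index $\nu\neq\nu_0$: since the remaining term is the $\nu_0$-th term of the single-entry sequence, $\varrho_{\ell_\qx(M_{\px,\ux})}\!\left(\tfrac{1}{\mu_k}(0,\dots,0,f_{\nu_0},0,\dots)\right)\le 1$ for all $k$, whence $\|(0,\dots,0,f_{\nu_0},0,\dots)\,|\,\ell_\qx(M_{\px,\ux})\|\le\mu_k\to 0$. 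By Proposition~\ref{pro:noqx} this quantity equals $\|f_{\nu_0}\,|\,M_{\px,\ux}\|$, so $\|f_{\nu_0}\,|\,M_{\px,\ux}\|=0$; by \eqref{def:morrey-norm} and the strict positivity of the factor $r^{\frac{n}{u(x)}-\frac{n}{p(x)}}$ this forces $\|f_{\nu_0}\chi_{B(x,r)}\,|\,L_\px\|=0$ for all $x\in\Rn$, $r>0$, hence $f_{\nu_0}=0$ almost everywhere. Since $\nu_0$ was arbitrary, $(f_\nu)_\nu=0$, and (ii), (iii) follow as noted.

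The only delicate step is (iv). Arguing directly through the semimodular axiom (ii) of $\varrho_{\ell_\qx(M_{\px,\ux})}$ is awkward, because a semimodular need not be continuous at $0$, so one cannot simply pass from $\varrho_{\ell_\qx(M_{\px,\ux})}(\tfrac{1}{\mu_k}(f_\nu)_\nu)\le 1$ to $\varrho_{\ell_\qx(M_{\px,\ux})}(\lambda(f_\nu)_\nu)=0$ for all $\lambda>0$; moreover, trying to unwind \eqref{def:lpqumod-simple} at the level of the whole sequence runs into the non-constancy of $q$. Routing instead through Proposition~\ref{pro:noqx}, which neutralizes the influence of $q$ on single-entry sequences, reduces everything to the elementary fact that the Morrey quasinorm $\|\cdot\,|\,M_{\px,\ux}\|$ separates points, which in turn rests on the same property of $L_\px$. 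Axiom (i) and homogeneity, by contrast, are purely formal consequences of the corresponding properties of $\varrho_{\ell_\qx(M_{\px,\ux})}$.
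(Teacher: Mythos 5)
Your proof is correct. The overall skeleton coincides with the paper's: homogeneity by a change of variable in the infimum using property (iii) of the semimodular, axiom (i) trivially, (iii) from homogeneity, (ii) from (iv), with the only substantive work in (iv). Where you genuinely diverge is in how you establish (iv). The paper fixes $x\in\Rn$ and $r>0$, observes that $\varrho_{\ell_\qx(M_{\px,\ux})}\big(\tfrac1\mu(f_\nu)_\nu\big)\le 1$ for all $\mu>0$ forces $\big\|\big(r^{\frac{n}{u(x)}-\frac{n}{p(x)}}f_\nu\chi_{B(x,r)}\big)_\nu\,|\,\ell_\qx(L_\px)\big\|=0$, and then invokes the external fact (from \cite{AlmH10}) that the $\ell_\qx(L_\px)$-quasinorm is a modular. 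You instead project onto single-entry sequences, use Proposition~\ref{pro:noqx} to identify the resulting quantity with $\|f_{\nu_0}\,|\,M_{\px,\ux}\|$, and finish with the point-separating property of the Morrey quasinorm. Your route is the same trick the paper itself uses for axiom (ii) of the \emph{semimodular} in Theorem~\ref{theo:modular}, so it is internally consistent and has the small advantage of staying within results already proved in this paper rather than importing the modular property of $\|\cdot\,|\,\ell_\qx(L_\px)\|$; the paper's route avoids invoking Proposition~\ref{pro:noqx}. All the small steps you rely on (monotonicity of the sum under deleting entries, the existence of $\mu_k\to0^+$ in the defining set, positivity of the weight $r^{\frac{n}{u(x)}-\frac{n}{p(x)}}$) check out.
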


\begin{proof}

The homogeneity can be shown in the standard way using a change of variable in the infimum: apart from the obvious case $\alpha=0$,
\begin{eqnarray*}
\left\|\alpha(f_\nu)_\nu\,|\, \ell_\qx\big(M_{\px,\ux}\big)\right\| &= & \inf \left\{\mu>0: \varrho_{\ell_\qx \left(M_{\px,\ux}\right)} \big(\tfrac{1}{\mu}\,|\alpha|(f_\nu)_\nu\big) \leq 1\right\}\\
&= & |\alpha| \, \inf\left\{\tfrac{\mu}{|\alpha|}>0: \varrho_{\ell_\qx \left(M_{\px,\ux}\right)} \big(\tfrac{1}{\mu/|\alpha|}\,(f_\nu)_\nu\big) \leq 1\right\}\\
& = & |\alpha| \, \left\|(f_\nu)_\nu\,|\, \ell_\qx\big(M_{\px,\ux}\big)\right\|.
\end{eqnarray*}
It remains to prove that \eqref{def:lpqunorm} satisfies properties (i)-(iv) of Definition~\ref{def:modular}. Property (i) is easy and property (iii) is clear from the homogeneity. On the other hand, property (ii) follows from (iv), which we proceed to show. If $\left\|(f_\nu)_\nu\,|\, \ell_\qx\big(M_{\px,\ux}\big)\right\| = 0$ then $\varrho_{\ell_\qx \left(M_{\px,\ux}\right)} \Big(\frac{1}{\mu}(f_\nu)_\nu\Big) \leq 1$ for all $\mu>0$. This implies
$$
\varrho_{\ell_\qx(L_\px)} \left(\frac{1}{\mu}\Big(r^{\frac{n}{u(x)}-\frac{n}{p(x)}}\,f_\nu\,\chi_{B(x,r)}\Big)_\nu\right) \leq 1
$$
for all $\mu>0$, $x\in\Rn$ and $r>0$. Hence
$$
\left\|\Big(r^{\frac{n}{u(x)}-\frac{n}{p(x)}}\,f_\nu\,\chi_{B(x,r)}\Big)_\nu\,|\, \ell_\qx(L_\px)\right\| = 0
$$
for every $x\in\Rn$ and $r>0$. Since $\left\|\cdot\,|\, \ell_\qx(L_\px)\right\|$ is a {modular in the vector space of all sequences contained in $\mathcal{M}_0(\Rn)$}, then we have $f_\nu(y)\,\chi_{B(x,r)}(y)=0$ almost everywhere, for each $\nu\in\Nz$, $x\in\Rn$ and $r>0$. Hence $f_\nu=0$ almost everywhere for all $\nu\in\Nz$.
\end{proof}

\begin{proposition}\label{pro:reduction}
Let $p,q\in\PP$ be such that there exists $A\in[1,\infty)$ and $t\in(0,\infty)$ such that
\begin{equation}\label{ineq-triangle}
\big\|(f_\nu)_\nu+(g_\nu)_\nu\,|\,\ell_\qx\big(L_{\px}\big)\big\|^t \leq A \left(\big\|(f_\nu)_\nu\,|\,\ell_\qx\big(L_{\px}\big)\big\|^t + \big\|(g_\nu)_\nu\,|\,\ell_\qx\big(L_{\px}\big)\big\|^t\right),
\end{equation}
for all $(f_\nu)_\nu, (g_\nu)_\nu \in \ell_\qx\big(L_{\px}\big)$. Then, given $u\in\PP$ such that $p(x)\leq u(x)$, \eqref{ineq-triangle} also holds with $\ell_\qx\big(L_{\px}\big)$ replaced by $\ell_\qx\big(M_{\px,\ux}\big)$, for any  $(f_\nu)_\nu, (g_\nu)_\nu \in \ell_\qx\big(M_{\px,\ux}\big)$.
\end{proposition}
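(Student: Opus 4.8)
The plan is to feed the hypothesis \eqref{ineq-triangle} into the definition \eqref{def:lpqumod} one summation index $\nu$ at a time, but with the ball allowed to depend on $\nu$. The starting observation is the identity
\[
\varrho_{\ell_\qx \left(M_{\px,\ux}\right)}\big((h_\nu)_\nu\big)=\sup_{(x_\nu,r_\nu)_\nu}\ \varrho_{\ell_\qx(L_\px)}\Big(\big(\sigma_\nu\,h_\nu\,\chi_{B(x_\nu,r_\nu)}\big)_\nu\Big),
\]
where $\sigma_\nu:=r_\nu^{\frac{n}{u(x_\nu)}-\frac{n}{p(x_\nu)}}$ and the supremum runs over all families $(x_\nu,r_\nu)_{\nu\in\Nz}\subset\Rn\times(0,\infty)$. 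Indeed, by \eqref{def:lpqumod} the left-hand side equals $\sum_{\nu\ge0}\sup_{x,r}\inf\{\lambda>0:\varrho_\px(r^{\frac{n}{u(x)}-\frac{n}{p(x)}}h_\nu\,\chi_{B(x,r)}/\lambda^{1/\qx})\le1\}$, while by \eqref{def:lpqmod} the $\nu$-th summand of $\varrho_{\ell_\qx(L_\px)}$ of the truncated sequence depends only on the $\nu$-th ball, so — all terms being nonnegative — the supremum over independently chosen families of the resulting sum equals the sum of the individual suprema (one passes from finite to countable sums monotonically), which is again $\sum_{\nu\ge0}\sup_{x,r}\inf\{\dots\}$. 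Thus the Morrey semimodular is recovered as a supremum of \emph{Lebesgue}-sequence semimodulars evaluated on genuine sequences, and that is where \eqref{ineq-triangle} enters.

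Now let $(f_\nu)_\nu,(g_\nu)_\nu\in\ell_\qx\big(M_{\px,\ux}\big)$ and put $a:=\big\|(f_\nu)_\nu\,|\,\ell_\qx\big(M_{\px,\ux}\big)\big\|$ and $b:=\big\|(g_\nu)_\nu\,|\,\ell_\qx\big(M_{\px,\ux}\big)\big\|$, both finite by assumption. If $a=0$ then $(f_\nu)_\nu=(0)_\nu$ by Lemma~\ref{lem:norm-part1} and the claim reduces to $b^t\le A\,b^t$, which holds since $A\ge1$; symmetrically if $b=0$. So assume $a,b\in(0,\infty)$ and set $\mu:=\big(A(a^t+b^t)\big)^{1/t}$. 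Fix an arbitrary family $(x_\nu,r_\nu)_\nu$ and write $\phi_\nu:=\sigma_\nu f_\nu\chi_{B(x_\nu,r_\nu)}$ and $\gamma_\nu:=\sigma_\nu g_\nu\chi_{B(x_\nu,r_\nu)}$. Comparing term by term (one ball against the supremum over all balls) and using the monotonicity of $\varrho_\px$ in the scalar factor,
\[
\varrho_{\ell_\qx(L_\px)}\big(\tfrac1a(\phi_\nu)_\nu\big)\le\varrho_{\ell_\qx \left(M_{\px,\ux}\right)}\big(\tfrac1a(f_\nu)_\nu\big)\le1,
\]
where the last inequality is the unit ball property for the left-continuous semimodular $\varrho_{\ell_\qx \left(M_{\px,\ux}\right)}$ (Theorem~\ref{theo:modular}), used together with the homogeneity of the quasinorm (Lemma~\ref{lem:norm-part1}). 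Hence $\big\|(\phi_\nu)_\nu\,|\,\ell_\qx(L_\px)\big\|\le a$ and likewise $\big\|(\gamma_\nu)_\nu\,|\,\ell_\qx(L_\px)\big\|\le b$; in particular both sequences lie in $\ell_\qx(L_\px)$.

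Applying \eqref{ineq-triangle} to $(\phi_\nu)_\nu$ and $(\gamma_\nu)_\nu$ gives $\big\|(\phi_\nu+\gamma_\nu)_\nu\,|\,\ell_\qx(L_\px)\big\|^t\le A(a^t+b^t)=\mu^t$, hence $\big\|(\phi_\nu+\gamma_\nu)_\nu\,|\,\ell_\qx(L_\px)\big\|\le\mu$, and the unit ball property for $\ell_\qx(L_\px)$ (left-continuity, \cite[Proposition~3.5]{AlmH10}) yields $\varrho_{\ell_\qx(L_\px)}\big(\tfrac1\mu(\phi_\nu+\gamma_\nu)_\nu\big)\le1$. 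Since $\phi_\nu+\gamma_\nu=\sigma_\nu(f_\nu+g_\nu)\chi_{B(x_\nu,r_\nu)}$ and the family $(x_\nu,r_\nu)_\nu$ was arbitrary, taking the supremum over all such families and using the identity of the first paragraph gives $\varrho_{\ell_\qx \left(M_{\px,\ux}\right)}\big(\tfrac1\mu(f_\nu+g_\nu)_\nu\big)\le1$, whence $\big\|(f_\nu+g_\nu)_\nu\,|\,\ell_\qx\big(M_{\px,\ux}\big)\big\|\le\mu=\big(A(a^t+b^t)\big)^{1/t}$ by \eqref{def:lpqunorm}; this is \eqref{ineq-triangle} for $\ell_\qx\big(M_{\px,\ux}\big)$, and its finiteness also shows $(f_\nu+g_\nu)_\nu\in\ell_\qx\big(M_{\px,\ux}\big)$.

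The one point that needs care — and the reason a cheaper argument via $\sup_{x,r}\big\|(\sigma\,h_\nu\chi_{B(x,r)})_\nu\,|\,\ell_\qx(L_\px)\big\|$ cannot work — is precisely the interchange of the supremum with the infinite $\nu$-sum in the identity of the first paragraph: the supremum over balls sits inside the $\nu$-sum in \eqref{def:lpqumod}, and it may be pulled out only because the ball at level $\nu$ is chosen independently of the other levels while all quantities in sight are nonnegative. Everything else (the reduction to $a,b\in(0,\infty)$, the termwise monotonicity comparison, the two invocations of the unit ball property) is routine.
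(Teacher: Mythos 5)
Your proof is correct and follows essentially the same route as the paper's: fix one ball per level $\nu$, feed the resulting sequences into the $\ell_\qx(L_\px)$ hypothesis, and recover the Morrey semimodular through near-optimal balls with errors $\varepsilon 2^{-\nu}$ --- which is exactly the content of your sup--sum interchange identity, carried out there inline rather than stated up front. The only cosmetic difference is that you pass through the unit ball property of $\varrho_{\ell_\qx\left(M_{\px,\ux}\right)}$ (via Theorem~\ref{theo:modular}) where the paper compares the defining infima of the two quasinorms directly.
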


\begin{proof}
{It is not hard to check,  using the fact $q^->0$, that $\big\|(f_\nu)_\nu|\,\ell_\qx\big(M_{\px,\ux}\big)\big\|<\infty$ for sequences $(f_\nu)_\nu\in\ell_\qx\big(M_{\px,\ux}\big)$}. Let $(f_\nu)_\nu, (g_\nu)_\nu \in \ell_\qx\big(M_{\px,\ux}\big)$ and denote
$$
\mu_f:=\big\|(f_\nu)_\nu\,|\,\ell_\qx\big(M_{\px,\ux}\big)\big\|^t \ \ \ \ \textrm{and} \ \ \ \ \mu_g:=\big\|(g_\nu)_\nu\,|\,\ell_\qx\big(M_{\px,\ux}\big)\big\|^t.
$$
It is clear {from the previous lemma} that \eqref{ineq-triangle} is trivial if at least one of the previous quantities is zero. So, we assume $\mu_f,\mu_g>0$. Consider any sequences $(x_\nu)_\nu\subset \Rn$ and $(r_\nu)_\nu \subset (0,\infty)$. Let
$$
F_\nu := r_\nu^{\frac{n}{u(x_\nu)}-\frac{n}{p(x_\nu)}}f_\nu\,\chi_{B(x_\nu,r_\nu)} \ \ \ \textrm{and} \ \ \ G_\nu := r_\nu^{\frac{n}{u(x_\nu)}-\frac{n}{p(x_\nu)}}g_\nu\,\chi_{B(x_\nu,r_\nu)}\,, \ \ \ \ \nu\in\Nz.
$$
Then $(F_\nu)_\nu$ and $(G_\nu)_\nu$ belong to $\ell_\qx\big(L_{\px}\big)$, with
$$ \big\|(F_\nu)_\nu\,|\,\ell_\qx\big(L_{\px}\big)\big\|^t \leq \mu_f \, \ \ \ \textrm{and} \ \ \
\big\|(G_\nu)_\nu\,|\,\ell_\qx\big(L_{\px}\big)\big\|^t \leq \mu_g.
$$
Indeed,
\begin{eqnarray*}
\lefteqn{\big\|(F_\nu)_\nu\,|\,\ell_\qx\big(L_{\px}\big)\big\| = \inf\left\{ \mu>0:  \varrho_{\ell_\qx(L_{\px})} \big(\tfrac{1}{\mu}(F_\nu)_\nu\big) \leq 1 \right\}}\\
& = & \inf\left\{ \mu>0: \sum_{\nu\geq 0} \inf\left\{ \lambda>0: \varrho_\px \Big( \tfrac{1}{\mu} \, \frac{r_\nu^{\frac{n}{u(x_\nu)}-\frac{n}{p(x_\nu)}}f_\nu\,\chi_{B(x_\nu,r_\nu)}}{\lambda^{1/\qx}}\Big) \leq 1 \right\} \leq 1 \right\}\\
& \leq & \inf\left\{ \mu>0: \sum_{\nu\geq 0} \sup_{x\in\Rn, r>0} \inf\left\{ \lambda>0: \varrho_\px \Big( \tfrac{1}{\mu} \, \frac{r^{\frac{n}{u(x)}-\frac{n}{p(x)}}f_\nu\,\chi_{B(x,r)}}{\lambda^{1/\qx}}\Big) \leq 1 \right\} \leq 1 \right\}\\
& = & \inf\left\{ \mu>0:  \varrho_{\ell_\qx \left(M_{\px,\ux}\right)} \big(\tfrac{1}{\mu}(f_\nu)_\nu\big) \leq 1 \right\} = \big\|(f_\nu)_\nu\,|\,\ell_\qx\big(M_{\px,\ux}\big)\big\| = \mu_f^{1/t},
\end{eqnarray*}
and analogously for $(G_\nu)_\nu$, $(g_\nu)_\nu$. Hence, by the hypothesis \eqref{ineq-triangle},
$$
\big\|(F_\nu)_\nu+(G_\nu)_\nu\,|\,\ell_\qx\big(L_{\px}\big)\big\|^t \leq A (\mu_f + \mu_g).
$$
Thus
$$
\left\|\frac{(F_\nu)_\nu+(G_\nu)_\nu}{[A (\mu_f + \mu_g)]^{1/t}}\,|\,\ell_\qx\big(L_{\px}\big)\right\| \leq 1
$$
and then, by the unit ball property,
$$
\sum_{\nu\geq 0} \inf\left\{ \lambda>0: \varrho_\px \Big( \frac{r_\nu^{\frac{n}{u(x_\nu)}-\frac{n}{p(x_\nu)}} (f_\nu+g_\nu)\,\chi_{B(x_\nu,r_\nu)}}{[A (\mu_f + \mu_g)]^{1/t} \,\lambda^{1/\qx}}\Big) \leq 1 \right\}\leq 1.
$$
Observe that this implies that each infimum is at most one and{, being valid for any sequences $(x_\nu)_\nu\subset \Rn$ and $(r_\nu)_\nu \subset (0,\infty)$, also implies} that the corresponding
\begin{equation}\label{aux-sup}
\sup_{x\in\Rn, r>0} \inf\left\{ \lambda>0: \varrho_\px \Big( \frac{r^{\frac{n}{u(x)}-\frac{n}{p(x)}} (f_\nu+g_\nu)\,\chi_{B(x,r)}}{[A (\mu_f + \mu_g)]^{1/t}\, \lambda^{1/\qx}}\Big) \leq 1 \right\}
\end{equation}
is finite (and even at most one). Therefore, given any $\varepsilon >0$ and $\nu\in\Nz$, it is possible to find $y_\nu\in\Rn$ and $R_\nu\in(0,\infty)$ such that
$$ \eqref{aux-sup} \leq \inf\left\{ \lambda>0: \varrho_\px \Big( \frac{R_\nu^{\frac{n}{u(y_\nu)}-\frac{n}{p(y_\nu)}} (f_\nu+g_\nu)\,\chi_{B(y_\nu,R_\nu)}}{[A (\mu_f + \mu_g)]^{1/t} \,\lambda^{1/\qx}}\Big) \leq 1 \right\} +\frac{\varepsilon}{2^\nu}.$$
Taking the sum with respect to $\nu$ in both sides, we get, for the $F_\nu$, $G_\nu$ built from $y_\nu$ and $R_\nu$,
$$
\varrho_{\ell_\qx \left(M_{\px,\ux}\right)} \left( \Big(\frac{f_\nu+g_\nu}{[A (\mu_f + \mu_g)]^{1/t}}\Big)_\nu\right) \leq \varrho_{\ell_\qx\big(L_{\px}\big)} \left( \Big(\frac{F_\nu+G_\nu}{[A (\mu_f + \mu_g)]^{1/t}}\Big)_\nu\right) + 2\varepsilon \leq 1 +  2\varepsilon.
$$
By the arbitrariness of $\varepsilon>0$, we conclude that
$$
\varrho_{\ell_\qx \left(M_{\px,\ux}\right)} \left( \Big(\frac{f_\nu+g_\nu}{[A (\mu_f + \mu_g)]^{1/t}}\Big)_\nu\right) \leq 1
$$
and, consequently,
$$
\left\| \Big(\frac{f_\nu+g_\nu}{[A (\mu_f + \mu_g)]^{1/t}}\Big)_\nu \,|\,\ell_\qx\big(M_{\px,\ux}\big) \right\| \leq 1,
$$
from which the desired inequality follows by homogeneity.
\end{proof}

From Lemma~\ref{lem:norm-part1} and Proposition~\ref{pro:reduction} combined with \cite[Theorems~3.6 and 3.8]{AlmH10} and \cite[Theorem~1]{KemV13}, we get the following final result:

\begin{theorem}\label{theo:lpqunorm}
The functional \eqref{def:lpqunorm} defines a quasinorm in the vector space $\ell_\qx\big(M_{\px,\ux}\big)$ for any $p,q,u\in\PP$ with $p(x)\leq u(x)$. Moreover, it induces a norm in the following cases (each one understood for almost every $x\in\Rn$):
\begin{enumerate}
\item $p(x)\ge 1$ and $q\in[1,\infty]$ is constant;
\item $1\leq q(x) \leq p(x) \leq u(x) \leq \infty$;
\item $\frac{1}{p(x)}+\frac{1}{q(x)}\leq 1$.
\end{enumerate}
\end{theorem}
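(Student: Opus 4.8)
The plan is to deduce the statement from what has already been done rather than to attack the complicated $\sup$--$\inf$ structure of \eqref{def:lpqunorm} directly. Lemma~\ref{lem:norm-part1} already shows that \eqref{def:lpqunorm} is a homogeneous modular on the ambient space of all sequences in $\mathcal{M}_0(\Rn)$, so it supplies the identity $\|0\|=0$, the symmetry, the definiteness (property (iv): vanishing of the functional forces the sequence to be $0$ a.e.) and the homogeneity $\|\lambda(f_\nu)_\nu\|=|\lambda|\,\|(f_\nu)_\nu\|$. What is missing is therefore only a (quasi)triangle inequality, and Proposition~\ref{pro:reduction} reduces that to the corresponding inequality for the mixed Lebesgue-sequence spaces $\ell_\qx(L_\px)$, which is already available in the literature.

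For the quasinorm assertion, recall from \cite[Theorems~3.6 and 3.8]{AlmH10} that for arbitrary $p,q\in\PP$ there exist $A\in[1,\infty)$ and $t\in(0,\infty)$ for which the $t$-triangle inequality \eqref{ineq-triangle} holds for $\ell_\qx(L_\px)$. By Proposition~\ref{pro:reduction}, with the same $A$ and $t$, inequality \eqref{ineq-triangle} then also holds for $\ell_\qx(M_{\px,\ux})$ whenever $p(x)\leq u(x)$. Using the elementary estimate $(a^t+b^t)^{1/t}\leq \max\{1,2^{1/t-1}\}\,(a+b)$ for $a,b\geq 0$, this turns into a genuine quasi-triangle inequality $\big\|(f_\nu)_\nu+(g_\nu)_\nu\,|\,\ell_\qx(M_{\px,\ux})\big\|\leq C\big(\big\|(f_\nu)_\nu\,|\,\ell_\qx(M_{\px,\ux})\big\|+\big\|(g_\nu)_\nu\,|\,\ell_\qx(M_{\px,\ux})\big\|\big)$ with $C=A^{1/t}\max\{1,2^{1/t-1}\}$. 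In particular the right-hand side is finite when $(f_\nu)_\nu,(g_\nu)_\nu\in\ell_\qx(M_{\px,\ux})$, and since $q^->0$ a finite value of the functional forces membership in the space; hence $\ell_\qx(M_{\px,\ux})$ is closed under addition, which (with the homogeneity) settles the claim that it is a vector space. Combining the quasi-triangle inequality with the homogeneity and the properties (i)--(iv) from Lemma~\ref{lem:norm-part1} yields that \eqref{def:lpqunorm} is a quasinorm.

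For the norm cases one runs exactly the same argument, but now starting from a genuine triangle inequality for $\ell_\qx(L_\px)$, i.e.\ with $A=1$ and $t=1$ in \eqref{ineq-triangle}. This is provided by \cite[Theorems~3.6 and 3.8]{AlmH10} in cases (1) and (3), and by \cite[Theorem~1]{KemV13} in case (2); for the latter one uses that the hypothesis $1\leq q(x)\leq p(x)\leq u(x)\leq\infty$ contains both the condition $1\leq q(x)\leq p(x)\leq\infty$ under which \cite{KemV13} proves the triangle inequality for $\ell_\qx(L_\px)$ and the condition $p(x)\leq u(x)$ required by Proposition~\ref{pro:reduction}. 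In each of the three regimes, Proposition~\ref{pro:reduction} transfers the triangle inequality verbatim to $\ell_\qx(M_{\px,\ux})$, and together with Lemma~\ref{lem:norm-part1} this shows that \eqref{def:lpqunorm} is a norm there.

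Since essentially all the substance is carried by Proposition~\ref{pro:reduction} and Lemma~\ref{lem:norm-part1}, there is no serious obstacle left at this stage; the only points demanding care are the routine bookkeeping to pass from a $t$-triangle inequality to a quasi-triangle inequality (and, simultaneously, to the vector-space property via finiteness), and the verification that the hypotheses of the cited results of \cite{AlmH10} and \cite{KemV13} match precisely the three listed regimes — in particular that case (2) has been phrased so as to supply both $1\leq q(x)\leq p(x)\leq\infty$ for the $\ell_\qx(L_\px)$ input and $p(x)\leq u(x)$ for the reduction.
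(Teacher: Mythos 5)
Your proposal is correct and takes essentially the same route as the paper, which likewise derives the theorem by combining Lemma~\ref{lem:norm-part1} and Proposition~\ref{pro:reduction} with \cite[Theorems~3.6 and 3.8]{AlmH10} and \cite[Theorem~1]{KemV13}; you merely spell out the routine passage from the $t$-triangle inequality \eqref{ineq-triangle} to the quasi-triangle inequality and the matching of hypotheses in the three normed cases. (One cosmetic slip: the hypothesis $q^->0$ is what guarantees that membership in $\ell_\qx\big(M_{\px,\ux}\big)$ implies finiteness of the functional \eqref{def:lpqunorm}; the converse implication you attach it to is immediate from the definition of the infimum.)
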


\begin{remark}
 Note that, in particular, the previous theorem summarizes the cases {where we known that} $\big\|\cdot\,|\,\ell_\qx\big(M_{\px,\ux}\big)\big\|$ satisfies the triangle inequality. We would like also to remark that there are exponents $p, q\in\PP$ with $p(x)\geq 1$ and $q(x)\geq 1$ for all $x\in\Rn$ (even for constant $p$) for which $\big\|\cdot\,|\,\ell_\qx\big(M_{\px,\ux}\big)\big\|$ does not satisfy the triangle inequality (cf. \cite[Theorem~2]{KemV13}, with $p(x)=u(x)$).
\end{remark}

\section{A convolution inequality}\label{sec:convolution}

It is known that the Hardy-Littlewood maximal operator is bounded in $L_\px$ if $p^->1$ and $1/p$ is \emph{globally $\log$-H\"older continuous}, in the sense of conditions \eqref{logloc} and \eqref{loginfty} below (see \cite[Chapter~4]{DHHR11} for a detailed discussion including references). This fact has been crucial, for instance, for the development of harmonic analysis in variable exponent Lebesgue spaces. However, the situation is more complicated when we consider the mixed Lebesgue-sequence spaces. In fact, as observed in \cite[Section~4]{AlmH10} (see also \cite{DieHR09} for the vector-valued case) we can not expect the so-called maximal inequality to hold in the spaces $\ell_\qx(L_\px)$ when $q$ is non-constant. Therefore, the mixed sequence space loses an important feature of the iterated space, namely the inheritance of properties from the starting space $L_\px$. We face a similar problem in our setting of mixed Morrey-sequence spaces.

The difficulty described above was successfully overcome, first in \cite{DieHR09} and then in \cite{AlmH10}, through the study of convolutions involving nice kernels, namely the $\eta$-functions defined by
\[
\eta_{\nu,m}(x) := \frac{2^{n\nu}}{(1+2^\nu|x|)^m}\,, \ \ \ \nu\in \Nz, \ \ m>0.
\]
Note that $\eta_{\nu,m}\in L_{1}$ for $m>n$ and the corresponding $L_1$-norm does not depend on $\nu$.

Convolution inequalities with kernels given by the functions above have been heavily used as a replacement of the boundedness of the maximal operator in the mixed Lebesgue-sequence spaces. Consequently, they proved to be key tools for the development of the theory of the Besov and Triebel-Lizorkin scales with all the indices variable (cf. \cite{AlmH10}, \cite{DieHR09}). Such a convolution inequality was particularly hard to obtain for the spaces $\ell_\qx(L_\px)$ when $q$ is variable (such an inequality is immediate from the corresponding result in $L_\px$ only for constant $q$). The following inequality was proved in \cite[Lemma~4.7]{AlmH10}; see also \cite[Lemma~10]{KemV12} for the size condition indicated on $m$.
\begin{lemma}
Let $p,q\in\PPlog$ with $p^-\geq 1$. If $m>n+c_{\log}(1/q)$, then there exists $c>0$ such that
\begin{equation}\label{conv-eta-mixed-lebesgue}
\| (\eta_{\nu,m} \ast f_\nu)_\nu\,|\, \ell_\qx(L_\px) \| \le c\, \|
(f_\nu)_\nu \,|\, \ell_\qx(L_\px) \|
\end{equation}
for all $(f_\nu)_\nu \in \ell_\qx(L_\px)$.
\end{lemma}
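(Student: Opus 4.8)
The plan is to reduce the claim to a purely modular inequality and then run the Diening--Harjulehto--H\"ast\"o machinery for convolutions with the kernels $\eta_{\nu,m}$, the genuinely new ingredient being the spatial dependence of $q$. For constant $q$ the estimate is routine, since $\ell_q(L_\px)$ is an iterated space and \eqref{conv-eta-mixed-lebesgue} then follows from the known bound $\|\eta_{\nu,m}\ast f\,|\,L_\px\|\lesssim\|f\,|\,L_\px\|$, uniform in $\nu$, valid for $p\in\PPlog$ with $p^-\ge1$; so one may assume $q$ non constant, in particular $q^-<\infty$. By homogeneity, the unit ball property, and the norm--modular comparison for $\ell_\qx(L_\px)$ (which is Lemma~\ref{lem:norm-mod} applied with $u=p$, recalling $M_{\px,\px}=L_\px$), it suffices to produce $\delta=\delta(n,p,q,m)>0$ such that $\varrho_{\ell_\qx(L_\px)}\big((f_\nu)_\nu\big)\le1$ implies $\varrho_{\ell_\qx(L_\px)}\big((\delta\,\eta_{\nu,m}\ast f_\nu)_\nu\big)\le C_1$ for some constant $C_1$ independent of the sequence. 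Using the lattice property of $L_\px$ we may take $f_\nu\ge0$, and we first handle $q^+<\infty$ (so that the simpler form \eqref{def:lpqmodsimple} is available), the case $q^+=\infty$ being analogous, working with the infimum form \eqref{def:lpqmod} throughout (and the convention $\lambda^{1/\infty}=1$).

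Unravelling \eqref{def:lpqmod}, the hypothesis reads $\sum_{\nu\ge0}\lambda_\nu\le1$ with $\lambda_\nu:=\inf\{\lambda>0:\varrho_\px(f_\nu/\lambda^{1/\qx})\le1\}$. Put $\widetilde\lambda_\nu:=\lambda_\nu+2^{-\nu}$, so that $\widetilde\lambda_\nu\in[2^{-\nu},2]$ and $\sum_\nu\widetilde\lambda_\nu\le3=:C_1$, and set $g_\nu:=f_\nu/\widetilde\lambda_\nu^{1/\qx}$; since $\widetilde\lambda_\nu\ge\lambda_\nu$, monotonicity of $\varrho_\px$ together with its left-continuity (which gives $\varrho_\px(f_\nu/\lambda_\nu^{1/\qx})\le1$ when $\lambda_\nu>0$; if $\lambda_\nu=0$ then $f_\nu=0$ a.e.\ and everything below is trivial) yields $\varrho_\px(g_\nu)\le1$, i.e.\ $\|g_\nu\,|\,L_\px\|\le1$. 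By the definition \eqref{def:lpqmod} of $\varrho_{\ell_\qx(L_\px)}$, it therefore suffices to prove
\begin{equation*}
\varrho_\px\big(\delta\,\widetilde\lambda_\nu^{-1/\qx}\,\eta_{\nu,m}\ast f_\nu\big)\le 1\qquad\text{for every }\nu\ge0,
\end{equation*}
because then the $\nu$-th infimum entering $\varrho_{\ell_\qx(L_\px)}\big((\delta\eta_{\nu,m}\ast f_\nu)_\nu\big)$ is at most $\widetilde\lambda_\nu$, and these sum to $C_1$.

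The heart of the matter is a pointwise estimate. Writing $f_\nu=\widetilde\lambda_\nu^{1/\qx}g_\nu$ inside the convolution, the obstruction to pulling the weight out is the oscillation of $1/q$, and this is exactly absorbed by the $\log$-H\"older continuity of $1/q$ together with the bounds $2^{-\nu}\le\widetilde\lambda_\nu\le2$: for $|x-y|\lesssim1$ one has
\begin{equation*}
\widetilde\lambda_\nu^{1/q(y)-1/q(x)}\le\exp\!\Big(\frac{\nu\,(\log 2)\,c_{\log}(1/q)}{\log(e+1/|x-y|)}\Big)\ \lesssim\ (1+2^\nu|x-y|)^{c_{\log}(1/q)},
\end{equation*}
the range $|x-y|\gtrsim1$ being taken care of by the decay of $1/q$ towards $q_\infty$. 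Hence $\eta_{\nu,m}(x-y)\,\widetilde\lambda_\nu^{1/q(y)-1/q(x)}\lesssim\eta_{\nu,m_1}(x-y)$ with $m_1:=m-c_{\log}(1/q)$, so that
\begin{equation*}
\eta_{\nu,m}\ast f_\nu(x)\ \lesssim\ \widetilde\lambda_\nu^{1/q(x)}\,\big(\eta_{\nu,m_1}\ast g_\nu\big)(x),
\end{equation*}
and here $m_1>n$ holds precisely because of the size condition $m>n+c_{\log}(1/q)$. This weight transfer — together with the requirement that the surviving exponent $m_1$ still exceed $n$, so that $\eta_{\nu,m_1}\in L_1$ with $\nu$-independent norm — is the main obstacle of the proof. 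Once it is in place, the standard ``key estimate'' for $\eta$-convolutions in variable Lebesgue spaces applies (see \cite[Ch.~4]{DHHR11}; it is available since $p\in\PPlog$, $p^-\ge1$, and $m_1$ may be taken as large as needed, $m$ being only bounded from below): there is $\beta\in(0,1)$ such that, for all $x$, all $\nu$, and all $g_\nu\ge0$ with $\|g_\nu\,|\,L_\px\|\le1$,
\begin{equation*}
\beta\,\big(\eta_{\nu,m_1}\ast g_\nu(x)\big)^{p(x)}\ \le\ \big(\eta_{\nu,m_1}\ast g_\nu^{p(\cdot)}\big)(x)+h_\nu(x),\qquad\sup_\nu\|h_\nu\,|\,L_1\|<\infty.
\end{equation*}

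Combining the two estimates and using $t^{p(x)}\le t^{p^-}$ for $0<t\le1$, we obtain, for $\delta$ with $\delta c_2<1$ (where $c_2$ denotes the constant in the weight transfer),
\begin{equation*}
\varrho_\px\big(\delta\,\widetilde\lambda_\nu^{-1/\qx}\,\eta_{\nu,m}\ast f_\nu\big)\ \le\ (\delta c_2)^{p^-}\,\beta^{-1}\big(\|\eta_{\nu,m_1}\ast g_\nu^{p(\cdot)}\,|\,L_1\|+\|h_\nu\,|\,L_1\|\big),
\end{equation*}
and by Young's inequality $\|\eta_{\nu,m_1}\ast g_\nu^{p(\cdot)}\,|\,L_1\|\le\|\eta_{\nu,m_1}\,|\,L_1\|\,\varrho_\px(g_\nu)\le\|\eta_{\nu,m_1}\,|\,L_1\|$. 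It then suffices to choose $\delta$ so small that $(\delta c_2)^{p^-}\beta^{-1}\big(\|\eta_{\nu,m_1}\,|\,L_1\|+\sup_\nu\|h_\nu\,|\,L_1\|\big)\le1$ (the set where $p(x)=\infty$, if any, being treated separately via the unit ball property of $L_\infty$). The delicate part throughout is keeping every loss constant ($c_2$, $\beta$, $\|\eta_{\nu,m_1}\,|\,L_1\|$, $\sup_\nu\|h_\nu\,|\,L_1\|$) independent of the sequence $(f_\nu)_\nu$, so that it can all be absorbed once into the single constant $\delta$ — which is exactly why the argument is organized at the modular / unit-ball level from the outset.
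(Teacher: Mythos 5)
Your proposal is correct and follows essentially the same strategy as the proof the paper relies on: this lemma is not proved in the paper but quoted from \cite[Lemma~4.7]{AlmH10} (with the size condition on $m$ from \cite[Lemma~10]{KemV12}), and your reduction to the modular level via the unit ball property, the perturbation $\widetilde\lambda_\nu=\lambda_\nu+2^{-\nu}\in[2^{-\nu},2]$, and the transfer of the weight $\widetilde\lambda_\nu^{1/q(\cdot)}$ across the convolution at the cost of $c_{\log}(1/q)$ in the exponent $m$ (which is exactly Lemma~\ref{lem:weights} applied with $\alpha=\pm\tfrac{\log_2\widetilde\lambda_\nu}{\nu}\cdot\tfrac1q$) is precisely the argument there, and also precisely Steps 1--3 of the paper's own Theorem~\ref{theo:conv-eta-Morrey-seq} specialized to $u=p$. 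The only divergence is cosmetic: in the final step you invoke the pointwise ``key estimate'' with the integrable error $h_\nu$ in the style of \cite{DieHR09}, whereas one can conclude more directly from the norm-level bound $\|\eta_{\nu,m_1}\ast g_\nu\,|\,L_\px\|\lesssim\|g_\nu\,|\,L_\px\|\le1$ of Lemma~\ref{lem:convbounded} together with the unit ball property of $\varrho_\px$ (which also disposes of the set where $p(x)=\infty$ without a separate discussion); likewise, the case $|x-y|\gtrsim1$ of your weight-transfer estimate needs only the local condition \eqref{logloc}, not the decay of $1/q$ at infinity.
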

Note that the class of exponents $\PPlog$ and the meaning of $c_{\log}(1/q)$ are defined below in Section~\ref{sec:holdercontinuous} by means of \eqref{logloc} and \eqref{loginfty}.

The main goal of this section is to obtain a corresponding convolution inequality for the mixed Morrey-sequence spaces $\ell_\qx\big(M_{\px,\ux}\big)$. First we collect some auxiliary results.

\subsection{The $\log$-H\"{o}lder continuity and auxiliary results}\label{sec:holdercontinuous}

We say that a continuous function $g\,:\, \Rn\to \R$ is {\em locally $\log$-H\"{o}lder
continuous}, abbreviated $g \in C^{\log}_\loc(\Rn)$, if there exists $c_{\log}(g)\geq 0$ such that
\begin{equation}\label{logloc}
    |g(x)-g(y)| \leq \frac{c_{\log}(g)}{\log (e + 1/|x-y|)}\,, \ \ \ \ \ \mbox{for all $x,y\in\Rn$.}
\end{equation}
The function $g$ is said to be {\em globally $\log$-H\"{o}lder continuous},
abbreviated $g \in C^{\log}(\Rn)$, if it is locally $\log$-H\"{o}lder
continuous and there exists $g_\infty \in \R$ and $c_{\infty}(g)\geq 0$ such that
\begin{equation}\label{loginfty}
|g(x) - g_\infty| \leq \frac{c_{\infty}(g)}{\log(e+ |x|)}\,, \ \ \ \ \ \mbox{for all $x\in\Rn$.}
\end{equation}
The notation $\PPlog$ is used for those variable exponents $p\in \PP$ such that $\frac1p \in C^{\log}(\Rn)$. Moreover we consider $\frac{1}{p_\infty}:=\big(\frac{1}{p}\big)_\infty$.


Sometimes we need to estimate norms of characteristic functions on balls. The following result is taken from \cite[Corollary~4.5.9]{DHHR11}.
\begin{lemma}\label{lem:balls}
Let $p\in\PPlog$ with $p(x)\ge 1$. Then
\begin{equation*}
\|\chi_{B(x,r)}\,|\,L_\px\| \approx \left\{\begin{array}{cc}
                                             r^{\frac{n}{p(x)}}\,, & \mbox{if}\quad r\leq 1, \\
                                             r^{\frac{n}{p_\infty}}\,, & \mbox{if}\quad r\geq 1,
                                           \end{array}
                                           \right.
\end{equation*}
with the implicit constants independent of $x\in\Rn$ and $r>0$.
\end{lemma}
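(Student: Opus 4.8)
The plan is to work directly with the explicit semimodular $\varrho_\px(\chi_{B(x,r)}/\lambda)=\int_{B(x,r)}\phi_{p(y)}(1/\lambda)\,dy$ together with the unit ball property of $L_\px$, splitting into the two regimes $r\le1$ and $r\ge1$ and exploiting the two halves \eqref{logloc} and \eqref{loginfty} of the $\log$-Hölder condition respectively. In both cases the mechanism is the same: transfer the $\log$-Hölder control on $1/p$ into a two-sided comparison of the power $r^{n/p(y)}$ with the target value ($r^{n/p(x)}$ for $r\le1$, $r^{n/p_\infty}$ for $r\ge1$), and then estimate the modular integral using $|B(x,r)|\approx r^n$ and the elementary inequality $(t^{-1})^{p(y)}\le t^{-1}$, valid for $t\in(0,1]$ since $p(y)\ge1$. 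Throughout one may assume the competing $\lambda$ is $\le1$ (when $r\le1$) or $\ge1$ (when $r\ge1$), which lets one discard the set $\{p(\cdot)=\infty\}$ (its contribution to $\varrho_\px$ vanishes for $\lambda\ge1$); on the exceptional configurations where that set has positive measure inside $B(x,r)$, condition \eqref{logloc} or \eqref{loginfty} forces $1/p(x)$ (resp.\ $1/p_\infty$) to be small, so both sides of the asserted equivalence lie between two positive constants and there is nothing to prove.

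For $r\le1$ the point is that $|x-y|<r\le1$ gives $\log(e+1/|x-y|)\ge\log(1/r)$, hence $\bigl|\tfrac{n}{p(y)}-\tfrac{n}{p(x)}\bigr|\log\tfrac1r\le n\,c_{\log}(1/p)$, so there is $C_1\ge1$ (depending only on $n$ and $c_{\log}(1/p)$) with $r^{n/p(x)}\le C_1\,r^{n/p(y)}$ and $r^{n/p(y)}\le C_1\,r^{n/p(x)}$ for all $y\in B(x,r)$. Taking $\lambda=\beta^{-1}r^{n/p(x)}$ one estimates $\lambda^{-p(y)}=(\beta r^{-n/p(x)})^{p(y)}\le(\beta C_1\,r^{-n/p(y)})^{p(y)}=(\beta C_1)^{p(y)}r^{-n}\le\beta C_1\,r^{-n}$ (using $p(y)\ge1$, $\beta C_1\le1$), so $\varrho_\px(\chi_{B(x,r)}/\lambda)\lesssim\beta C_1\le1$ for a suitable small absolute $\beta$, giving $\|\chi_{B(x,r)}|L_\px\|\le\beta^{-1}r^{n/p(x)}$. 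The reverse inequality follows symmetrically with $\lambda=\beta\,r^{n/p(x)}$, which yields $\varrho_\px(\chi_{B(x,r)}/\lambda)\gtrsim\beta^{-1}\ge1$ for small $\beta$; to pass from $\varrho_\px(\chi/\lambda)\ge1$ to $\|\chi|L_\px\|\ge\lambda$ one uses $\varrho_\px(f/s)\ge s^{-1}\varrho_\px(f)$ for $s\in(0,1)$, which holds because $p(\cdot)\ge1$.

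For $r\ge1$ the lower bound is the easier half: every such ball contains a sub-ball $B_\ast$ of radius $\gtrsim r$ lying at distance $\gtrsim r$ from the origin (take $B_\ast=B(x,r/2)$ when $|x|>2r$, and push towards the far side of $B(x,r)$ otherwise), and on $B_\ast$ the decay condition \eqref{loginfty} gives $\bigl|\tfrac{n}{p(y)}-\tfrac{n}{p_\infty}\bigr|\log r\le n\,c_\infty(1/p)\,\tfrac{\log r}{\log(e+r)}\lesssim1$, whence $r^{n/p(y)}\approx r^{n/p_\infty}$ on $B_\ast$; repeating the previous computation with $B_\ast$ and target $r^{n/p_\infty}$ (note $p$ is automatically bounded on $B_\ast$) gives $\|\chi_{B(x,r)}|L_\px\|\ge\|\chi_{B_\ast}|L_\px\|\gtrsim r^{n/p_\infty}$. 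The upper bound for $r\ge1$ is the main obstacle, because near the origin $p(\cdot)$ may differ from $p_\infty$ by an $O(1)$ amount, and the crude bound $\lambda^{-p(y)}\le\lambda^{-1}$ there produces the useless factor $r^{\,n(1-1/p_\infty)}$.

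To overcome this I would decompose $B(x,r)$ into a ball $B(0,R_*)$ of fixed radius $R_*$ (depending on $p$ but not on $x,r$; here the crude bound suffices since the measure is bounded and $r^{-n/p_\infty}\le1$), the dyadic annuli $A_k=\{2^k\le|y|<2^{k+1}\}\cap B(x,r)$ with $R_*\le2^k\lesssim r$, and — when $|x|$ greatly exceeds $r$ — the boundedly many remaining annuli that a ball of radius $r$ far from the origin can meet (on which $p\approx p_\infty$ with great precision). On $A_k$ one uses $p(y)\ge p_\infty-\tfrac{c}{\log(e+2^k)}$ to get $\lambda^{-p(y)}\le\lambda^{-p_\infty}\lambda^{c/\log(e+2^k)}$ with $\lambda=C_0\,r^{n/p_\infty}$; the crux is that, writing $2^k=r^\theta$ and organizing the resulting estimates by the distance $K-k$ from the outermost annulus (where $2^K\approx r$), the apparent $r$-power loss coming from $\lambda^{c/\log(e+2^k)}$ is exactly offset by the measure factor $2^{nk}=r^{n\theta}$, so the whole sum collapses to a convergent geometric series that can be made $\le1$ by taking $C_0$ large. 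This balancing of exponent growth against volume is the delicate step; choosing $R_*$ comparable to $e^{c_\infty(1/p)\,p_\infty}$ so that even the innermost annulus contributes a bounded amount is where the decay constant $c_\infty(1/p)$ enters the final implicit constant. Granting this, the unit ball property gives $\|\chi_{B(x,r)}|L_\px\|\le C_0\,r^{n/p_\infty}$ for $r\ge1$, which together with the previous steps completes the proof.
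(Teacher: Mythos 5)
Your proof is correct in substance, but note that the paper itself does not prove this lemma: it is simply quoted from \cite[Corollary~4.5.9]{DHHR11}, where it follows from the equivalence $\|\chi_B\,|\,L_\px\|\approx |B|^{1/p_B}$ (with $p_B$ a harmonic mean) together with the machinery relating the decay condition \eqref{loginfty} to integral (Nekvinda-type) conditions. What you propose instead is a direct, self-contained estimation of the semimodular $\varrho_\px(\chi_{B(x,r)}/\lambda)$ using only the unit ball property, the elementary bound $t^{p(y)}\le t$ for $t\le 1$, $p(y)\ge 1$, the comparison $r^{n/p(y)}\approx r^{n/p(x)}$ on small balls from \eqref{logloc}, and, for the delicate upper bound on large balls, a dyadic annular decomposition in which the loss $\lambda^{c/\log(e+2^k)}$ coming from \eqref{loginfty} is absorbed by the volume factor. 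This buys independence from the book's apparatus at the price of redoing, in the special case of characteristic functions, exactly the balancing that the Nekvinda-type condition encodes; both routes are legitimate, and your treatment of the lower bounds (including the sub-ball $B_\ast$ pushed away from the origin) and of the exceptional set $\{p=\infty\}$ is sound.

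Two places should be tightened. First, the annulus estimate $p(y)\ge p_\infty-\tfrac{c}{\log(e+2^k)}$ presupposes $p_\infty<\infty$ (and carries the constant $c\approx c_\infty(1/p)\,p_\infty^2$, which is precisely why your threshold $R_*\approx e^{c_\infty(1/p)\,p_\infty}$ appears); when $p_\infty=\infty$ you should instead use \eqref{loginfty} directly in the form $p(y)\ge \log(e+|y|)/c_\infty(1/p)$, which in fact makes the sum easier since the target is $r^{n/p_\infty}=1$ (the same caveat applies to your parenthetical claim that $p$ is automatically bounded on $B_\ast$). Second, the assertion that the annulus sum ``collapses to a convergent geometric series'' is not literally accurate: with $2^K\approx r$ and $a:=c_\infty(1/p)\,p_\infty$, the $k$-th term is of size $2^{-(K-k)n}\exp\bigl(naK/k\bigr)$ (up to the harmless $C_0$-powers), so geometric decay in $K-k$ with an $r$-independent factor holds only for the outer annuli, say $2^k\ge\sqrt{r}$, where the exponential factor is at most $e^{2na}$; for the inner annuli the exponential factor is a genuine positive power of $r$, namely $r^{na/\log(e+2^k)}$, and must be beaten by $(2^k/r)^n\le r^{-n/2}$, which is exactly what your choice $\log(e+R_*)\ge 2a$ guarantees, since then
\begin{equation*}
\sum_{R_*\le 2^k\le\sqrt{r}}\Bigl(\tfrac{2^k}{r}\Bigr)^n r^{\frac{na}{\log(e+2^k)}}\lesssim r^{\frac{na}{\log(e+R_*)}-\frac n2}\le 1 .
\end{equation*}
With this two-regime splitting the annulus sum is bounded uniformly in $x$ and $r$ and decays as a negative power of $C_0$, so your conclusion stands; these are repairs of exposition rather than of substance.
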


The convolution operator behaves well in $L_\px$ for $\log$-H\"older continous exponents if we take radially decreasing integrable kernels (cf. \cite[Lemma~4.6.3]{DHHR11}).

\begin{lemma}\label{lem:convbounded}
Let $p\in\PPlog$ with $p(x)\ge 1$. Let $\psi\in L_1$ and $\psi_\varepsilon(x):=\varepsilon^{-n}\psi(x/\varepsilon)$, for $\varepsilon>0$. Suppose that  $\Psi(x):=\sup_{|y|\ge |x|} |\psi(y)|$ (the radial decreasing majorant of $\psi$) is integrable and $f\in L_\px$. Then
$$\|\psi_\varepsilon \ast f\,|\, L_\px\| \lesssim  \|\Psi\,|\,L_1\| \, \|f\,|\,L_\px\|$$
(where the implicit constant depends only on $n$ and $p$).
\end{lemma}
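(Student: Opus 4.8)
The statement is quoted from \cite[Lemma~4.6.3]{DHHR11}; here is how I would organize the argument.

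\textbf{Reductions.} Since $|\psi_\varepsilon\ast f|\le \Psi_\varepsilon\ast|f|$ pointwise (because $|\psi|\le\Psi$), and since both sides of the claimed inequality scale linearly under $\psi\mapsto\lambda\psi$ and under $f\mapsto\lambda f$, I would first reduce to the case in which $f\ge 0$, $\psi=\Psi$ is nonnegative and radially decreasing, $\|\Psi\,|L_1\|=1$, and $\|f\,|L_\px\|\le 1$. By the unit ball property (valid because $\varrho_\px$ is left-continuous) the last condition gives $\varrho_\px(f)\le 1$. It then suffices to exhibit a constant $c=c(n,p)\ge 1$ with $\varrho_\px\big(c^{-1}\Psi_\varepsilon\ast f\big)\le 1$, since $\|\Psi_\varepsilon\ast f\,|L_\px\|\le c$ follows, and undoing the two normalizations yields the stated inequality.

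\textbf{Writing the kernel as an average of ball means.} Since $\Psi$ is radially decreasing, its superlevel sets are balls, $\{\Psi>t\}=B(0,\rho(t))$ for a decreasing function $\rho$, so by the layer-cake formula $\Psi(x)=\int_0^\infty\chi_{B(0,\rho(t))}(x)\,dt$, and hence (by Tonelli, $f\ge0$)
\[
(\Psi_\varepsilon\ast f)(x)=\int_0^\infty w(t)\,\Big(\tfrac{1}{|B(x,\varepsilon\rho(t))|}\!\int_{B(x,\varepsilon\rho(t))}\! f(y)\,dy\Big)\,dt,\qquad w(t):=|B(0,\varepsilon\rho(t))|,
\]
where $\int_0^\infty w(t)\,dt=\int_{\Rn}\Psi_\varepsilon=1$. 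Writing $A_Rf(x):=\frac{1}{|B(x,R)|}\int_{B(x,R)}f$, the displayed identity expresses $\Psi_\varepsilon\ast f(x)$ as the mean of the ball averages $A_{\varepsilon\rho(t)}f(x)$ against the probability weight $w(t)\,dt$. Because $p(x)\ge 1$, the function $t\mapsto\phi_{p(x)}(t)$ is convex (including when $p(x)=\infty$, where it is the $\{0,\infty\}$-valued indicator-type function of $[0,1]$), so Jensen's inequality (valid in $[0,\infty]$) gives
\[
\phi_{p(x)}\big(c^{-1}(\Psi_\varepsilon\ast f)(x)\big)\le\int_0^\infty w(t)\,\phi_{p(x)}\big(c^{-1}A_{\varepsilon\rho(t)}f(x)\big)\,dt .
\]

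\textbf{The key estimate and integration.} For each fixed scale $R$ I would invoke the key pointwise estimate for $\log$-H\"older exponents (in the form of \cite[Section~4.2]{DHHR11}, which underlies \cite[Lemma~4.6.3]{DHHR11}): there are $c=c(n,p)\ge 1$ and, for each $R>0$, a nonnegative $h_R\in L_1$ whose $L_1$-norm is controlled uniformly in $R$ and $x$, such that for all $x\in\Rn$ and all $g\ge0$ with $\varrho_\px(g)\le 1$,
\[
\phi_{p(x)}\big(c^{-1}A_Rg(x)\big)\le A_R\big(\phi_{p(\cdot)}(g)\big)(x)+h_R(x).
\]
(This is where the local condition \eqref{logloc} is used on small balls, \eqref{loginfty} on large balls, and where the values $p(x)=\infty$ are accommodated.) Inserting this with $g=f$ into the Jensen bound, integrating in $x$, and using Tonelli together with the identity $\int_{\Rn}A_R\big(\phi_{p(\cdot)}(f)\big)(x)\,dx=\varrho_\px(f)\le 1$ (swap the order and note $\int_{\Rn}\chi_{B(x,R)}(y)\,dx=|B(0,R)|$), one obtains
\[
\varrho_\px\big(c^{-1}\Psi_\varepsilon\ast f\big)\le\int_0^\infty w(t)\Big(\varrho_\px(f)+\big\|h_{\varepsilon\rho(t)}\,|L_1\big\|\Big)\,dt\le 1+C_0,
\]
with $C_0=C_0(n,p)$, using $\int w=1$ once more and the uniform control on $\|h_R\,|L_1\|$ (this is precisely the point that makes the bound independent of $\varepsilon$). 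Finally, since $1\le p^-<\infty$ one has $\varrho_\px(g/\lambda)\le\lambda^{-p^-}\varrho_\px(g)$ for $\lambda\ge 1$, so replacing $c$ by $c(1+C_0)$ gives $\varrho_\px\big((c(1+C_0))^{-1}\Psi_\varepsilon\ast f\big)\le 1$, i.e. $\|\Psi_\varepsilon\ast f\,|L_\px\|\le c(1+C_0)$, which after undoing the normalizations is the claim; the constant depends only on $n$ and $p$ (through the $\log$-H\"older constants of $1/p$), and not on $\varepsilon$ or on $\psi$ beyond $\|\Psi\,|L_1\|$.

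\textbf{Where the difficulty lies.} The reductions, the kernel decomposition, the Jensen step, and the Tonelli bookkeeping are routine; essentially all the content is concentrated in the key pointwise estimate of the third paragraph. Its proof must simultaneously use local $\log$-H\"older continuity \eqref{logloc} to compare $p(x)$ with the exponents on small balls centred at $x$, global $\log$-H\"older continuity \eqref{loginfty} (together with the smallness of $|f|^{p(\cdot)}$ outside a large ball) to compare with $p_\infty$ on large balls, and handle the possibility $p(x)=\infty$; the delicate part is to keep the error profile $h_R$ integrable with norm independent of $R$ (equivalently of $\varepsilon$) and of $x$. Since this is exactly \cite[Lemma~4.6.3]{DHHR11}, one may alternatively just cite it, which is what we do in the text.
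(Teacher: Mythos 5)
The paper offers no proof of this lemma at all---it is invoked directly as \cite[Lemma~4.6.3]{DHHR11}---and your proposal correctly reconstructs the standard argument behind that reference (layer-cake decomposition of the radially decreasing majorant into normalized ball averages, Jensen's inequality via convexity of $\phi_{p(x)}$ for $p(x)\ge 1$, and the key pointwise estimate for $\log$-H\"older exponents with uniformly integrable error), so citing it, as the paper does, is equally acceptable. The only slip is notational: the weight should be $w(t)=\varepsilon^{-n}\,|B(0,\varepsilon\rho(t))|=|B(0,\rho(t))|$, so that $\int_0^\infty w(t)\,dt=\|\Psi\,|\,L_1\|=1$; as written, your displayed identity is off by a factor $\varepsilon^{n}$, but this does not affect the structure or validity of the argument.
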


\begin{remark}\label{rem:conv-eta}
It is not hard to see that the $\eta$-functions above are good convolution kernels in the sense of the previous lemma. Indeed, taking $\psi=\eta_{0,m}$, with $m>n$, then we have $\Psi= \eta_{0,m}\in L_1$. Thus, putting $\psi_\varepsilon = \eta_{\nu,m}$ with $\varepsilon=2^{-\nu}$, we get the inequality
\begin{equation}\label{con-eta-lebesgue}
\|\eta_{\nu,m} \ast f \,|\, L_\px\| \lesssim  \|f\,|\,L_\px\|\,, \ \ \ \ \ f\in L_\px,
\end{equation}
if $m>n$ and $p\in\PPlog$, with $p(x)\ge 1$.
\end{remark}

The following result allow us to move some special weights inside convolutions with $\eta$-functions; see \cite[Lemma~6.1]{DieHR09} and \cite[Lemma~19]{KemV12}.

\begin{lemma}\label{lem:weights}
Let $\alpha$ be a locally $\log$-H\"older continuous function on $\Rn$ and $m\geq 0$. If $l\geq c_{\log}(\alpha)$ then
$$ 2^{\nu\alpha(x)}\,\eta_{\nu,m+l}(x-y) \lesssim 2^{\nu\alpha(y)}\, \eta_{\nu,m}(x-y)$$
with the implicit constant depending only on the function $\alpha$.
\end{lemma}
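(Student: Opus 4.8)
The plan is to reduce the statement to a scalar inequality with no dependence on the dimension. Dividing, one has $\eta_{\nu,m+l}(x-y)/\eta_{\nu,m}(x-y) = (1+2^\nu|x-y|)^{-l}$, so the claim is equivalent to
\[
2^{\nu(\alpha(x)-\alpha(y))} \le C\,(1+2^\nu|x-y|)^{l}
\]
for a constant $C$ depending only on $\alpha$. If $c_{\log}(\alpha)=0$ then $\alpha$ is constant and there is nothing to prove, since the left-hand side is $1$ and the right-hand side is at least $1$. Otherwise put $c:=c_{\log}(\alpha)>0$ and $r:=|x-y|$; local $\log$-Hölder continuity gives $\alpha(x)-\alpha(y)\le c/\log(e+1/r)$, and since $l\ge c$ and $1+2^\nu r\ge 1$ it suffices to establish the one-dimensional estimate
\[
\frac{\nu\log 2}{\log(e+1/r)}-\log(1+2^\nu r)\le 1\qquad(\nu\in\Nz,\ r>0),
\]
because then $2^{\nu(\alpha(x)-\alpha(y))}\le 2^{\nu c/\log(e+1/r)}=e^{c\,\nu\log 2/\log(e+1/r)}\le e^{c}(1+2^\nu r)^{c}\le e^{c}(1+2^\nu r)^{l}$, i.e.\ $C=e^{c}$.

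For the scalar estimate I would distinguish three regimes for $r$. If $r\le 2^{-\nu}$, then $\log(e+1/r)\ge\log(e+2^\nu)\ge\nu\log 2$, so the first term is $\le 1$ while the second is $\ge 0$. If $r\ge 1$, then $\log(1+2^\nu r)\ge\log(1+2^\nu)\ge\nu\log 2$ while $\log(e+1/r)\ge 1$, so the left-hand side is $\le 0$. The only genuinely delicate case is $2^{-\nu}<r<1$: here I would set $\theta:=\log_2(1/r)\in(0,\nu)$, so that $2^\nu r=2^{\nu-\theta}$, and combine $\log(1+2^{\nu-\theta})\ge(\nu-\theta)\log 2$ with the two elementary lower bounds $\log_2(e+2^\theta)\ge\theta$ and $\log_2(e+2^\theta)\ge\log_2 e$; this reduces the left-hand side to at most $\nu/\max\{\theta,\log_2 e\}-(\nu-\theta)\log 2$. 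A short split according to whether $\theta\ge\log_2 e$ or $\theta<\log_2 e$, using only the inequality $\nu>\theta$, then bounds that quantity by $1$ in both sub-cases.

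The step that really requires care — and, in fact, the only obstacle — is the intermediate regime $2^{-\nu}<r<1$, where neither of the two terms dominates the other. The crude bounds $\log(e+1/r)\ge 1$ and $\log(1+2^\nu r)\ge\log 2$ that are enough in the other two regimes are far too wasteful here: using them one ends up only with an estimate of the form $2^{\nu(\alpha(x)-\alpha(y))}\lesssim(1+2^\nu|x-y|)^{K c_{\log}(\alpha)}$ for some $K>1$, which does not cover exponents $l$ in the range $[c_{\log}(\alpha),K c_{\log}(\alpha))$ and hence falls short of the lemma as stated. Retaining both the factor $\log 2$ and the constraint $\theta<\nu$ is exactly what yields the sharp range $l\ge c_{\log}(\alpha)$. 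Once the scalar inequality is in hand the lemma follows immediately, with implicit constant $e^{c_{\log}(\alpha)}$, which depends on $\alpha$ alone.
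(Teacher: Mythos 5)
Your proof is correct: the reduction to the scalar inequality $2^{\nu(\alpha(x)-\alpha(y))}\le e^{c_{\log}(\alpha)}(1+2^{\nu}|x-y|)^{c_{\log}(\alpha)}$ and the three-regime case analysis on $|x-y|$ versus $2^{-\nu}$ and $1$ all check out (including the delicate middle regime, where your monotonicity argument in $\nu$ for $\theta\ge\log_2 e$ and the direct computation for $\theta<\log_2 e$ both close correctly). The paper itself does not prove this lemma but cites it from Diening--H\"ast\"o--Roudenko and Kempka--Vyb\'{\i}ral, whose proofs rest on essentially the same estimate, so your argument is in substance the standard one.
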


%


\subsection{A convolution inequality in mixed Morrey-sequence spaces}

The next theorem gives a corresponding inequality to \eqref{conv-eta-mixed-lebesgue} now for variable mixed Morrey-sequence spaces $\ell_\qx\big(M_{\px,\ux}\big)$.

\begin{theorem}\label{theo:conv-eta-Morrey-seq}
Let $p\in\PPlog$ and $q,u\in\PP$ with $1\leq p^-\leq p(x) \leq u(x) \leq \infty$ and $1/q$ locally $\log$-H\"older continuous. If
$$m>n+c_{\log}(1/q) + n\,\max\Big\{0,\sup_{x\in\Rn}\big(\tfrac{1}{p(x)}-\tfrac{1}{u(x)}\big)-\tfrac{1}{p_\infty}\Big\}$$ then there exists $c>0$ such that
\begin{equation}\label{conv-eta-mixed-morrey}
\left\| (\eta_{\nu,m} \ast f_\nu)_\nu\,|\, \ell_\qx\big(M_{\px,\ux}\big) \right\| \leq c\, \left\|
(f_\nu)_\nu \,|\, \ell_\qx\big(M_{\px,\ux}\big) \right\|.
\end{equation}
for all sequences $(f_\nu)_\nu \in \ell_\qx\big(M_{\px,\ux}\big)$.
\end{theorem}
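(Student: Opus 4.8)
The plan is to reduce the statement to the known convolution inequality \eqref{conv-eta-mixed-lebesgue} for $\ell_\qx(L_\px)$, using the unit ball property and the definition \eqref{def:lpqumod} of the Morrey-sequence semimodular as a supremum over balls $B(x,r)$. By homogeneity \eqref{t-power} (applied with $t=1$, which is available since $p^-\ge 1$) and the unit ball property, it suffices to show that there is a constant $c>0$ such that whenever $\varrho_{\ell_\qx(M_{\px,\ux})}\big((f_\nu)_\nu\big)\le 1$ one has $\varrho_{\ell_\qx(M_{\px,\ux})}\big(\tfrac1c(\eta_{\nu,m}\ast f_\nu)_\nu\big)\le 1$. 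Fix arbitrary sequences $(x_\nu)_\nu\subset\Rn$ and $(r_\nu)_\nu\subset(0,\infty)$; by the argument in the proof of Proposition~\ref{pro:reduction}, the quantity $\sum_\nu\inf\{\lambda>0:\varrho_\px(r_\nu^{n/u(x_\nu)-n/p(x_\nu)}f_\nu\chi_{B(x_\nu,r_\nu)}/\lambda^{1/\qx})\le 1\}$ is at most $\varrho_{\ell_\qx(M_{\px,\ux})}\big((f_\nu)_\nu\big)\le 1$, i.e. $(F_\nu)_\nu$ with $F_\nu:=r_\nu^{n/u(x_\nu)-n/p(x_\nu)}f_\nu\chi_{B(x_\nu,r_\nu)}$ has $\ell_\qx(L_\px)$-norm at most $1$.

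The heart of the matter is a pointwise estimate on $B(x_\nu,r_\nu)$ of the form
\begin{equation}\label{eq:proposal-key}
r_\nu^{\frac{n}{u(x_\nu)}-\frac{n}{p(x_\nu)}}\,\big|(\eta_{\nu,m}\ast f_\nu)(z)\big|\,\chi_{B(x_\nu,r_\nu)}(z)\;\lesssim\;\big(\eta_{\nu,\widetilde m}\ast F_\nu\big)(z)\,,\qquad z\in\Rn,
\end{equation}
for a suitable $\widetilde m$ still satisfying $\widetilde m>n+c_{\log}(1/q)$. The point is to control $\eta_{\nu,m}(z-y)$ for $z\in B(x_\nu,r_\nu)$ and $y\notin B(x_\nu,r_\nu)$ — only this part is not already captured by $F_\nu$ — by absorbing the weight $r_\nu^{n/u(x_\nu)-n/p(x_\nu)}$ into the decay of the kernel and the characteristic function $\chi_{B(x_\nu,r_\nu)}(y)$; the kernel's polynomial decay in $2^\nu|z-y|$, together with the fact that $\eta_{\nu,m}$ is comparable to $\eta_{\nu,m'}$ on the relevant scales, lets one trade powers of $r_\nu$ for powers of $m$ at the cost exactly $n\max\{0,\sup_x(\tfrac1{p(x)}-\tfrac1{u(x)})-\tfrac1{p_\infty}\}$ — here Lemma~\ref{lem:balls} is used to compare $r_\nu^{n/u(x_\nu)-n/p(x_\nu)}$ with norms of characteristic functions, distinguishing the cases $r_\nu\le 1$ (exponent $n/p(x_\nu)$) and $r_\nu\ge 1$ (exponent $n/p_\infty$), which is precisely where the $p_\infty$ term in the hypothesis on $m$ enters. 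For the case $r_\nu \ge 1$ one also needs Lemma~\ref{lem:weights} to move the weight $r_\nu$ (equivalently a dyadic weight) past the $\eta$-function; the local $\log$-Hölder control of $1/p$ and $1/u$ supplies the needed Hölder exponent.

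Granting \eqref{eq:proposal-key}, the monotonicity of $\varrho_\px$ and of the $\ell_\qx(L_\px)$ semimodular give
$$\sum_{\nu\ge 0}\inf\Big\{\lambda>0:\varrho_\px\Big(\tfrac1c\,r_\nu^{\frac{n}{u(x_\nu)}-\frac{n}{p(x_\nu)}}(\eta_{\nu,m}\ast f_\nu)\chi_{B(x_\nu,r_\nu)}/\lambda^{1/\qx}\Big)\le 1\Big\}\le \big\|\big(\eta_{\nu,\widetilde m}\ast F_\nu\big)_\nu\,|\,\ell_\qx(L_\px)\big\|\le c'\big\|(F_\nu)_\nu\,|\,\ell_\qx(L_\px)\big\|\le c',$$
by \eqref{conv-eta-mixed-lebesgue}, after choosing $c$ to absorb $c'$. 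Since $(x_\nu)_\nu$ and $(r_\nu)_\nu$ were arbitrary, the argument at the end of the proof of Proposition~\ref{pro:reduction} (picking near-optimal $x_\nu,r_\nu$ for each $\nu$ and summing with an $\varepsilon 2^{-\nu}$ error) upgrades this to $\varrho_{\ell_\qx(M_{\px,\ux})}\big(\tfrac1c(\eta_{\nu,m}\ast f_\nu)_\nu\big)\le 1$, whence \eqref{conv-eta-mixed-morrey} follows by the unit ball property and homogeneity. I expect the main obstacle to be the careful bookkeeping in \eqref{eq:proposal-key}: splitting $\Rn$ into $B(x_\nu,r_\nu)$ (where $F_\nu=f_\nu$ up to the weight) and its complement (where one must genuinely exploit the kernel's tail), handling the scale dichotomy $r_\nu\lessgtr 1$ uniformly in $\nu$, and verifying that the number of derivatives/decay $m$ is spent exactly as the hypothesis predicts — in particular that no loss occurs when $\sup_x(\tfrac1{p(x)}-\tfrac1{u(x)})\le\tfrac1{p_\infty}$, i.e. that the Lebesgue case $p=u$ is recovered with the sharp condition $m>n+c_{\log}(1/q)$.
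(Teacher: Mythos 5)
There is a genuine gap, and it sits exactly at the step you yourself identify as ``the heart of the matter'': the claimed pointwise estimate \eqref{eq:proposal-key} is false. Your $F_\nu=r_\nu^{n/u(x_\nu)-n/p(x_\nu)}f_\nu\chi_{B(x_\nu,r_\nu)}$ retains only the part of $f_\nu$ living \emph{inside} the ball $B(x_\nu,r_\nu)$, whereas $(\eta_{\nu,m}\ast f_\nu)(z)$ for $z\in B(x_\nu,r_\nu)$ picks up contributions from $f_\nu(y)$ for \emph{all} $y\in\Rn$. Take $f_\nu\ge 0$ supported entirely outside $B(x_\nu,r_\nu)$: then the right-hand side of \eqref{eq:proposal-key} vanishes identically while the left-hand side does not. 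No choice of $\widetilde m$ repairs this, and no enlargement of the single ball does either, because the kernel's tail reaches arbitrarily far. Consequently the reduction to the Lebesgue-sequence inequality \eqref{conv-eta-mixed-lebesgue} applied to $(F_\nu)_\nu$ collapses, and with it the whole proof.

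What is actually needed --- and what the paper does --- is to decompose $f_\nu=f_\nu^0+\sum_{i\ge 1}f_\nu^i$ over dyadic annuli $B(x_0,2^{i+1}r_0)\setminus B(x_0,2^ir_0)$ and to estimate each far piece on $B(x_0,r_0)$ by H\"older's inequality, paired with $\|\chi_{B(x_0,2^{i+1}r_0)}\,|\,L_{p'(\cdot)}\|$ and $\|\chi_{B(x_0,r_0)}\,|\,L_\px\|$ via Lemma~\ref{lem:balls}. This is where the Morrey structure is genuinely used: the piece $f_\nu^i$ is controlled by the supremum in the Morrey semimodular taken over the \emph{larger} ball $B(x_0,2^{i+1}r_0)$, not over $B(x_0,r_0)$ alone, and the resulting geometric series in $i$ converges precisely under the extra hypothesis $m>n+c_{\log}(1/q)+n\max\{0,\sup_x(1/p(x)-1/u(x))-1/p_\infty\}$. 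The near piece $f_\nu^0$ is handled by the $L_\px$-boundedness of convolution with radial kernels (Lemma~\ref{lem:convbounded}), and the $c_{\log}(1/q)$ loss arises from moving the $x$-dependent normalization $\delta_\nu^{-1/q(x)}$ past the kernel via Lemma~\ref{lem:weights} --- not, as in your plan, from quoting \eqref{conv-eta-mixed-lebesgue} as a black box. Your outer scaffolding (reduction to a modular inequality with a $2^{-\nu}$ error term, near-optimal choice of $(x_\nu,r_\nu)$, unit ball property, homogeneity) matches the paper's Step~1 and is sound; it is the core analytic estimate that must be replaced by the annular decomposition.
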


\begin{proof}
We assume first that $q^-<\infty$ (the case $q=\infty$ will be detailed in the last part of the proof).

\underline{Step 1}:
Let $x_0\in \Rn$ and $r_0>0$ be (arbitrarily) fixed. We prove that inequality \eqref{conv-eta-mixed-morrey} follows if we show that there exists $c_0\in(0,1]$ (independent of $x_0$, $r_0$, $\nu$ and $(f_\nu)_\nu$) such that
\begin{eqnarray}
\lefteqn{\inf\left\{\lambda>0: \, \varrho_{\px}\left( \frac{c_0\,r_0^{\frac{n}{u(x_0)}-\frac{n}{p(x_0)}} (\eta_{\nu,m} \ast f_\nu) \, \chi_{B(x_0,r_0)} }{\lambda^{\frac1{\qx}}}\right)\le 1 \right\}}\nonumber\\
& \!\!\!\!\! \le & \sup_{x\in\Rn, r>0}\inf\left\{\lambda>0: \, \varrho_{\px} \left(\frac{r^{\frac{n}{u(x)}-\frac{n}{p(x)}}\, f_\nu \, \chi_{B(x,r)}}{\lambda^{\frac1{\qx}}}\right)\le 1 \right\} \, + \,2^{-\nu}\,, \label{ineq-mod1}
\end{eqnarray}
provided the supremum on the right-hand side is at most one.\\
Let $(f_\nu)_\nu \in\ell_\qx\big(M_{\px,\ux}\big)$ with $\left\|
(f_\nu)_\nu \,|\, \ell_\qx\big(M_{\px,\ux}\big) \right\| \leq 1$. If \eqref{ineq-mod1} holds, passing to the supremum with respect to $x_0$ and $r_0$ and then taking the summation on $\nu$, we get
\begin{equation*}
\varrho_{\ell_\qx \left(M_{\px,\ux}\right)} \left(c_0(\eta_{\nu,m} \ast f_\nu)_\nu \right)\leq \varrho_{\ell_\qx \left(M_{\px,\ux}\right)} \left((f_\nu)_\nu \right) + 2 \leq 3.
\end{equation*}
By Lemma~\ref{lem:norm-mod} we have
$$ \left\|(\eta_{\nu,m} \ast f_\nu)_\nu \,|\, \ell_\qx\big(M_{\px,\ux}\big)\right\|\leq 3^{1/q^-}c_0^{-1}.$$
Thus \eqref{conv-eta-mixed-morrey} follows by homogeneity.

\underline{Step 2}: We prove \eqref{ineq-mod1}.\\ Suppose that $q^+<\infty$ (the case $q^+=\infty$ can be treated in a similar way; technically it is more complicated since we have to work directly with infima properties  instead of $L_{\frac{\px}{\qx}}$ quasinorms as below). Then \eqref{ineq-mod1} can be written as
\begin{equation}\label{ineq-mod2}
\left\|\left| c_0\,r_0^{\frac{n}{u(x_0)}-\frac{n}{p(x_0)}} (\eta_{\nu,m} \ast f_\nu) \, \chi_{B(x_0,r_0)} \right|^\qx | L_{\frac{\px}{\qx}}\right\|
 \le \sup_{x\in\Rn, r>0}\left\|\left| r^{\frac{n}{u(x)}-\frac{n}{p(x)}} \,f_\nu \,\chi_{B(x,r)} \right|^\qx | L_{\frac{\px}{\qx}}\right\| \, + \,2^{-\nu}
\end{equation}
(assuming the supremum on the right-hand side be at most one). For each $\nu\in\Nz$, let
$$\delta_\nu := \sup_{x\in\Rn\atop r>0}\left\|\left| r^{\frac{n}{u(x)}-\frac{n}{p(x)}} \,f_\nu \,\chi_{B(x,r)} \right|^\qx | \,L_{\frac{\px}{\qx}}\right\| \, + \,2^{-\nu}.$$
Then we want to show that there exists $c_0\in(0,1]$ (as indicated above) such that
\begin{equation*}
\left\|\left| c_0\,\delta_\nu^{-\frac{1}{\qx}}\, r_0^{\frac{n}{u(x_0)}-\frac{n}{p(x_0)}} (\eta_{\nu,m} \ast f_\nu) \, \chi_{B(x_0,r_0)} \right|^\qx | L_{\frac{\px}{\qx}}\right\| \le 1.
\end{equation*}
By the unit ball property the latter is equivalent to
\begin{equation}\label{ineq-mod3}
\varrho_{\frac{\px}{\qx}}\left(\left| c_0\,\delta_\nu^{-\frac{1}{\qx}}\, r_0^{\frac{n}{u(x_0)}-\frac{n}{p(x_0)}} (\eta_{\nu,m} \ast f_\nu) \, \chi_{B(x_0,r_0)}\right|^\qx \right)
 \le 1.
\end{equation}
To obtain \eqref{ineq-mod3} we decompose each $f_\nu$, $\nu\in\Nz$, into the sum
$$
f_\nu=f_\nu^0+\sum_{i=1}^\infty f_\nu^i\,,
$$
where
$$
f_\nu^0:=f_\nu\,\chi_{B(x_0,2r_0)} \ \ \ \ \ \ \textrm{and} \ \ \ \ \ \ f_\nu^i:=f_\nu\,\chi_{B(x_0,2^{i+1}r_0)\setminus B(x_0,2^ir_0)}\,, \ \ \ i\in\N.
$$
Then we have
\begin{equation}\label{aux3}
\Big| \delta_\nu^{-\frac{1}{q(x)}}(\eta_{\nu,m} \ast f_\nu)(x)\Big| \leq \sum_{i=0}^\infty \delta_\nu^{-\frac{1}{q(x)}} \big(\eta_{\nu,m} \ast |f_\nu^i|\big)(x) \lesssim  \sum_{i=0}^\infty \big(\eta_{\nu,m'} \ast \big| \delta_\nu^{-\frac{1}{q(\cdot)}}f_\nu^i\big|\big)(x).
\end{equation}
The second inequality follows from Lemma~\ref{lem:weights} with $l= c_{\log}(1/q)$ and $m':=m-l>n$. Let us give some details on the application of this lemma. Due to the assumption on the right-hand side of \eqref{ineq-mod2}, we have $2^{-\nu} \leq \delta_\nu \leq 1+2^{-\nu}$ for every $\nu\in\N$. If $\frac{\log_2\delta_\nu}{\nu} \in[0,1]$ we use Lemma~\ref{lem:weights} with $\alpha=-1/q$ (observing that $c_{\log}(-1/q)= c_{\log}(1/q)\geq 0$) to get
$$
\delta_\nu^{-\frac{1}{q(x)}}= \left(2^{-\frac{\nu}{q(x)}}\right)^{\frac{\log_2\delta_\nu}{\nu}} \lesssim \left(2^{-\frac{\nu}{q(y)}}\big(1+2^{\nu}|x-y|\big)^{c_{\log}(-1/q)}\right)^{\frac{\log_2\delta_\nu}{\nu}} \lesssim \delta_\nu^{-\frac{1}{q(y)}} \big(1+2^{\nu}|x-y|\big)^{c_{\log}(1/q)}.
$$
The same argument works when $\frac{\log_2\delta_\nu}{\nu} \in[-1,0]$ (choosing this time $\alpha=1/q$). The case $\nu=0$ can be handled in a similar way using that $\log_2\delta_0 \in[0,1]$.

Returning to \eqref{aux3}, the lattice property and the triangle inequality in $L_\px$ yield
$$\left\| \delta_\nu^{-\frac{1}{\qx}} r_0^{\frac{n}{u(x_0)}-\frac{n}{p(x_0)}} (\eta_{\nu,m} \ast f_\nu)\, \chi_{B(x_0,r_0)} \,| L_\px\right\| \lesssim I_1+I_2,$$
where
$$I_1:= \left\| r_0^{\frac{n}{u(x_0)}-\frac{n}{p(x_0)}} \big(\eta_{\nu,m'} \ast \big| \delta_\nu^{-\frac{1}{q(\cdot)}}f_\nu^0\big|\big)\, \chi_{B(x_0,r_0)} \,| L_\px\right\| $$
and
$$I_2:= \left\| \sum_{i=1}^\infty  r_0^{\frac{n}{u(x_0)}-\frac{n}{p(x_0)}} \big(\eta_{\nu,m'} \ast \big| \delta_\nu^{-\frac{1}{q(\cdot)}}f_\nu^i\big|\big)\, \chi_{B(x_0,r_0)} \,| L_\px\right\|. $$
Thus we obtain the desired inequality if we show that $I_1+I_2 \lesssim 1$.

\underline{Step 3}: We show that $I_1 \lesssim 1$ and $I_2 \lesssim 1$.\\
For the first, we apply Lemma~\ref{lem:convbounded} (and Remark~\ref{rem:conv-eta}) and get
\begin{equation*}
I_1 \lesssim \left\| r_0^{\frac{n}{u(x_0)}-\frac{n}{p(x_0)}} \big| \delta_\nu^{-\frac{1}{q(\cdot)}}f_\nu^0\big| \,| L_\px\right\| \lesssim \left\| \delta_\nu^{-\frac{1}{q(\cdot)}}\,(2r_0)^{\frac{n}{u(x_0)}-\frac{n}{p(x_0)}} \big| f_\nu^0\big| \,| L_\px\right\|,
\end{equation*}
which gives the desired estimate taking into account the definition of $\delta_\nu$.

The estimation of $I_2$ is harder. We start by estimating the convolution appearing in that part. For $x\in B(x_0,r_0)$ and $y\in B(x_0,2^{i+1}r_0)\setminus B(x_0,2^{i}r_0)$, we have $|x-y| \geq 2^{i-1}r_0$, and hence
$$\eta_{\nu,m'}(x-y) \leq 2^{\nu n}(1+2^{\nu+i-1}r_0)^{-m'} \leq 2^{\nu n}2^{m'}(1+2^{\nu+i}r_0)^{-m'}.$$
Using this fact and applying H\"older's inequality, we have
$$
\big(\eta_{\nu,m'} \ast \big| \delta_\nu^{-\frac{1}{q(\cdot)}}f_\nu^i\big|\big)(x) \lesssim 2^{\nu n}(1+2^{\nu+i}r_0)^{-m'}\, \|\chi_{B(x_0,2^{i+1}r_0)}\,|\,L_{p'(\cdot)}\|\, \|\delta_\nu^{-\frac{1}{q(\cdot)}}f_\nu^i\,|\,L_\px\|
$$
for every $x\in B(x_0,r_0)$. Therefore,
\begin{eqnarray*}
\lefteqn{I_2 \lesssim \sum_{i=1}^\infty 2^{\nu n}(1+2^{\nu+i}r_0)^{-m'}\,r_0^{\frac{n}{u(x_0)}-\frac{n}{p(x_0)}} (2^{i+1}r_0)^{\frac{n}{p(x_0)}-\frac{n}{u(x_0)}}}\\
& \quad & \times\,\big\|\chi_{B(x_0,2^{i+1}r_0)}\,|\,L_{p'(\cdot)}\big\| \, \Big\|(2^{i+1}r_0)^{\frac{n}{u(x_0)}-\frac{n}{p(x_0)}} \,\delta_\nu^{-\frac{1}{q(\cdot)}}f_\nu^i\,|\,L_\px\Big\| \, \big\|\chi_{B(x_0,r_0)}\,|\,L_\px\big\|.
\end{eqnarray*}
Since the last but one norm is at most one (by the definition of $\delta_\nu$), we have $I_2\lesssim \,I_3$ where
$$I_3:=  \sum_{i=1}^\infty 2^{\nu n}(1+2^{\nu+i}r_0)^{-m'}\,(2^{i+1})^{\frac{n}{p(x_0)}-\frac{n}{u(x_0)}} \, \|\chi_{B(x_0,2^{i+1}r_0)}\,|\,L_{p'(\cdot)}\| \, \|\chi_{B(x_0,r_0)}\,|\,L_\px\|.
$$
In order to estimate $I_3$, we consider two cases.

\noindent \textit{The case $0<r_0\leq 1$}: Choose $J\in\N$ such that $2^{J}r_0\leq 1 < 2^{J+1}r_0$ (consider $J=1$ if no such number exists) and split the sum with respect to $i\in\N$ into two parts, $\sum_{i=1}^{J-1}$ and $\sum_{i=J}^\infty$ (the first sum does not exist if $J=1$). Denote by $I_{3A}$ and $I_{3B}$ the corresponding splitting implied in $I_3$.

If $i\leq J-1$ we have $2^{i+1}r_0 \leq 2^Jr_0 \leq 1$. By Lemma~\ref{lem:balls}
$$
\|\chi_{B(x_0,r_0)}\,|\,L_\px\| \approx r_0^{\frac{n}{p(x_0)}} \ \ \ \ \text{and} \ \ \ \ \|\chi_{B(x_0,2^{i+1}r_0)}\,|\,L_{p'(\cdot)}\| \approx (2^{i+1}r_0)^{\frac{n}{p'(x_0)}}.
$$
Using these estimates, we get
$$I_{3A} \leq \sum_{i=1}^{J-1} 2^{\nu n}(1+2^{\nu+i}r_0)^{-m'}\,r_0^{n}\, (2^{i+1})^{n-\frac{n}{u(x_0)}}.$$
For $\nu\in\{0,1\}$ we use $(1+2^{\nu+i}r_0)^{-m'}\leq 1$ to obtain
$$I_{3A} \lesssim r_0^{n}\, \sum_{i=1}^{J-1} 2^{i\,n} \lesssim (2^Jr_0)^{n} \lesssim 1.$$
For $\nu\geq 2$ we have $2^\nu \geq 4 > (2^{J-1}r_0)^{-1}$ (recall that $2^{J+1}r_0 >1$), which implies $J-1>-\nu -\log_2 r_0$. We consider two separate cases depending on the size of $2^\nu r_0$:\\
The case $\underline{2^\nu r_0 > 1/2}$: here we use that $(1+2^{\nu+i}r_0)^{-m'}\leq (2^{\nu+i}r_0)^{-m'}$ and $m'>n$, and obtain
\begin{equation*}
I_{3A} \lesssim 2^{\nu n} \,r_0^{n}\, \sum_{i=1}^{J-1} (2^{\nu+i}r_0)^{-m'} (2^i)^{n-\frac{n}{u(x_0)}} \, 2^{n-\frac{n}{u(x_0)}}\lesssim \,2^{\nu(n-m')} \,r_0^{n-m'}\, \sum_{i=1}^{\infty} 2^{i(n-m')} \lesssim (2^{\nu}r_0)^{n-m'} \lesssim 1.
\end{equation*}
The case $\underline{2^\nu r_0 \leq 1/2}$: here we have $-\nu -\log_2 r_0 \geq 1$ and we can split the summation in $I_{3A}$ as
$$I_{3A} = \sum_{i=1}^{\lfloor-\nu -\log_2 r_0\rfloor} \cdots\, + \sum_{i=\lfloor-\nu -\log_2 r_0\rfloor+1}^{J-1} \cdots =: I_{3A_1} + I_{3A_2}.$$
To handle $I_{3A_1}$ we use again that $(1+2^{\nu+i}r_0)^{-m'}\leq 1$, and get
$$I_{3A_1} \lesssim 2^{\nu n} \,r_0^{n}\, \sum_{i=1}^{\lfloor-\nu -\log_2 r_0\rfloor} (2^{i+1})^n \lesssim 2^{\nu n} \,r_0^{n}\, 2^{n(-\nu -\log_2 r_0)} \approx 1.$$
In order to handle $I_{3A_2}$, we note that $(1+2^{\nu+i}r_0)^{-m'}\leq (2^{\nu+i}r_0)^{-m'}$ and $m'>n$. Then we obtain
\begin{eqnarray*}
I_{3A_2} & \lesssim & 2^{\nu n} \,r_0^{n}\, \sum_{i=\lfloor-\nu -\log_2 r_0\rfloor+1}^{J-1} (2^{\nu+i}r_0)^{-m'} (2^{i+1})^{n-\frac{n}{u(x_0)}}\\
& \lesssim & (2^{\nu}r_0)^{n-m'}\, \sum_{i=\lfloor-\nu -\log_2 r_0\rfloor+1}^{\infty} 2^{i(n-m')} \lesssim (2^{\nu}r_0)^{n-m'}\,2^{(n-m')(-\nu -\log_2 r_0)}= 1.
\end{eqnarray*}

Let now $i\geq J$. Since $2^{i+1}r_0>1$, by Lemma~\ref{lem:balls} we have
$$
\|\chi_{B(x_0,2^{i+1}r_0)}\,|\,L_{p'(\cdot)}\| \approx (2^{i+1}r_0)^{\frac{n}{(p')_\infty}} = (2^{i+1}r_0)^{\frac{n}{(p_\infty)'}}.
$$
We have
\begin{eqnarray*}
I_{3B} & \lesssim & \sum_{i=J}^{\infty} 2^{\nu n}(1+2^{\nu+i}r_0)^{-m'}\,(2^{i+1})^{\frac{n}{p(x_0)}-\frac{n}{u(x_0)}}\, (2^{i+1}r_0)^{n-\frac{n}{p_\infty}}\,r_0^{\frac{n}{p(x_0)}}\\
& \approx & 2^{\nu(n-m')} \, {r_0}^{-m'+n-\frac{n}{p_\infty}+\frac{n}{p(x_0)}} \sum_{i=J}^{\infty} 2^{(i+1)(-m'+\frac{n}{p(x_0)}-\frac{n}{u(x_0)}+n-\frac{n}{p_\infty})}.
\end{eqnarray*}
Since, by hypothesis, $-m'+\sup_{x\in\Rn}\big(\tfrac{n}{p(x)}-\tfrac{n}{u(x)}\big)+n-\tfrac{n}{p_\infty}<0$, the series above converges. Recalling also that $m'>n$ and that here $r_0\leq 1$ and $2^{J+1}r_0>1$, we get
$$I_{3B} \lesssim {r_0}^{-m'+n-\frac{n}{p_\infty}+\frac{n}{p(x_0)}} \; 2^{(J+1)(-m'+\frac{n}{p(x_0)}-\frac{n}{u(x_0)}+n-\frac{n}{p_\infty})} = (2^{J+1}r_0)^{(-m'+\frac{n}{p(x_0)}-\frac{n}{u(x_0)}+n-\frac{n}{p_\infty})} \, r_0^{\frac{n}{u(x_0)}} \leq 1.$$

\noindent \textit{The case $r_0> 1$}: In this case Lemma~\ref{lem:balls} gives
$$
\|\chi_{B(x_0,r_0)}\,|\,L_\px\| \approx r_0^{\frac{n}{p_\infty}} \ \ \ \ \text{and} \ \ \ \ \|\chi_{B(x_0,2^{i+1}r_0)}\,|\,L_{p'(\cdot)}\| \approx (2^{i+1}r_0)^{\frac{n}{p'_\infty}}.
$$
Using again the size conditions $m'>n$ and $-m'+\sup_{x\in\Rn}\big(\tfrac{n}{p(x)}-\tfrac{n}{u(x)}\big)+n-\tfrac{n}{p_\infty}<0$, we have
\begin{eqnarray*}
I_{3} & \lesssim & \sum_{i=1}^{\infty} 2^{\nu n}\,2^{m'} (1+2^{\nu+i+1}r_0)^{-m'}\,(2^{i+1})^{\frac{n}{p(x_0)}-\frac{n}{u(x_0)}}\, (2^{i+1}r_0)^{n-\frac{n}{p_\infty}}\, r_0^{\frac{n}{p_\infty}}\\
& \approx &  2^{\nu(n-m')}\,r_0^{n-m'}\,\sum_{i=1}^{\infty} 2^{(i+1)(-m'+\frac{n}{p(x_0)}-\frac{n}{u(x_0)}+n-\frac{n}{p_\infty})} \lesssim 1.
\end{eqnarray*}

\underline{Step 4}: Now we consider the case $q^-=\infty$ (i.e., $q(x)=\infty$ almost everywhere).\\
In this case we do not have an error term like in \eqref{ineq-mod1}. Instead of such inequality, now we want to show that there exists a constant $c_0\in(0,1]$, not depending on $x_0$, $r_0$, $\nu$ and $(f_\nu)_\nu$, such that
\begin{equation*}
\left\|c_0\,r_0^{\frac{n}{u(x_0)}-\frac{n}{p(x_0)}} (\eta_{\nu,m} \ast f_\nu) \, \chi_{B(x_0,r_0)}\, | \,L_\px \right\|
 \le \sup_{x\in\Rn\atop r>0}\left\| r^{\frac{n}{u(x)}-\frac{n}{p(x)}} \,f_\nu \,\chi_{B(x,r)} \, | \,L_\px\right\|.
\end{equation*}
for all $(f_\nu)_\nu \in \ell_{\infty}\big(M_{\px,\ux}\big)$.
This is equivalent to show that there exists $c_1 \ge 1$ (also independent of $x_0$, $r_0$, $\nu$ and $(f_\nu)_\nu$), such that
\begin{equation}\label{ineq-mod4}
\left\|r_0^{\frac{n}{u(x_0)}-\frac{n}{p(x_0)}} (\eta_{\nu,m} \ast f_\nu) \, \chi_{B(x_0,r_0)}\, | \,L_\px \right\|
 \le c_1 \sup_{x\in\Rn\atop r>0}\left\| r^{\frac{n}{u(x)}-\frac{n}{p(x)}} \,f_\nu \,\chi_{B(x,r)} \, | \,L_\px\right\|.
\end{equation}
for all $(f_\nu)_\nu \in \ell_{\infty}\big(M_{\px,\ux}\big)$.

As in the last part of Step~2, we have
\begin{eqnarray*}
\lefteqn{\left\| r_0^{\frac{n}{u(x_0)}-\frac{n}{p(x_0)}} (\eta_{\nu,m} \ast f_\nu)\, \chi_{B(x_0,r_0)} \,| L_\px\right\| \leq }\\
 & \leq & \left\| r_0^{\frac{n}{u(x_0)}-\frac{n}{p(x_0)}} \big(\eta_{\nu,m} \ast |f_\nu^0|\big)\, \chi_{B(x_0,r_0)} \,| L_\px\right\| + \left\| \sum_{i=1}^\infty  r_0^{\frac{n}{u(x_0)}-\frac{n}{p(x_0)}} \big(\eta_{\nu,m} \ast \big|f_\nu^i\big|\big)\, \chi_{B(x_0,r_0)} \,| L_\px\right\|.
\end{eqnarray*}
The estimation of the last two norms can be done as we did in $I_1$ and $I_2$ in Step~3{, except that now we get them dominated by the right-hand side of \eqref{ineq-mod4} instead of being dominated by constants alone}. Note that Lemma~\ref{lem:weights} is not necessary here, which means that we can proceed just  with $m'=m$ in the calculations.
\end{proof}

\begin{remark}
The arguments used in Step~4 of the proof above work not only for $q^-=\infty$ but also for any constant exponent $q\in(0,\infty]$ (with some minor modifications). We can see from the proof that the big difference between the constant $q$ case and the variable $q$ case appears in Steps~1-2.
\end{remark}

From Theorem~\ref{theo:conv-eta-Morrey-seq} with  constant $q$ (so $c_{\log}(1/q)=0$) combined with Proposition~\ref{pro:noqx} we can establish a convolution inequality for variable exponent Morrey spaces as follows.

\begin{corollary}\label{cor:conv-eta-Morrey}
Let $p\in\PPlog$ and $u\in\PP$ with $1\leq p^-\leq p(x) \leq u(x) \leq \infty$. If {$m>n+ n\,\max\Big\{0,\sup_{x\in\Rn}\big(\tfrac{1}{p(x)}-\tfrac{1}{u(x)}\big)-\tfrac{1}{p_\infty}\Big\}$}, then there exists $c>0$ such that
\begin{equation}\label{conv-eta-Morrey}
\left\| \eta_{\nu,m} \ast f\,|\, M_{\px,\ux} \right\| \leq c\, \left\| f \,|\,M_{\px,\ux} \right\|
\end{equation}
for all $\nu\in\Nz$ and $f \in M_{\px,\ux}$.
\end{corollary}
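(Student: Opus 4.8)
The plan is to obtain \eqref{conv-eta-Morrey} as a direct corollary of Theorem~\ref{theo:conv-eta-Morrey-seq}, by specializing the exponent $q$ there to a constant and testing the resulting inequality on sequences with a single non-zero term.

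First I would fix a constant exponent $q\in(0,\infty]$ (for definiteness, $q=1$). Then $1/q$ is trivially locally $\log$-H\"older continuous with $c_{\log}(1/q)=0$, so the size condition $m>n+c_{\log}(1/q)+n\max\{0,\sup_{x\in\Rn}(\tfrac{1}{p(x)}-\tfrac{1}{u(x)})-\tfrac{1}{p_\infty}\}$ required by Theorem~\ref{theo:conv-eta-Morrey-seq} reduces to precisely the hypothesis imposed on $m$ in the present statement. Hence that theorem applies and provides a constant $c>0$, depending only on $n,p,u,q,m$, such that \eqref{conv-eta-mixed-morrey} holds for every sequence $(f_\nu)_\nu\in\ell_q\big(M_{\px,\ux}\big)$.

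Next, given $\nu_0\in\Nz$ and $f\in M_{\px,\ux}$, I would apply the inequality just obtained to the sequence $(f_\nu)_\nu$ with $f_{\nu_0}:=f$ and $f_\nu:=0$ for $\nu\neq\nu_0$. On one hand, by Proposition~\ref{pro:noqx} (or simply by the iterated description in Proposition~\ref{pro:really-iterated}), this sequence belongs to $\ell_q\big(M_{\px,\ux}\big)$ and $\big\|(f_\nu)_\nu\,|\,\ell_q\big(M_{\px,\ux}\big)\big\|=\big\|f\,|\,M_{\px,\ux}\big\|$. On the other hand, $\eta_{\nu,m}\ast f_\nu=0$ whenever $f_\nu=0$, so the convolved sequence $(\eta_{\nu,m}\ast f_\nu)_\nu$ has at most one non-zero entry, namely $\eta_{\nu_0,m}\ast f$ at position $\nu_0$; applying Proposition~\ref{pro:noqx} once more gives $\big\|(\eta_{\nu,m}\ast f_\nu)_\nu\,|\,\ell_q\big(M_{\px,\ux}\big)\big\|=\big\|\eta_{\nu_0,m}\ast f\,|\,M_{\px,\ux}\big\|$. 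Substituting these two identities into \eqref{conv-eta-mixed-morrey} yields $\big\|\eta_{\nu_0,m}\ast f\,|\,M_{\px,\ux}\big\|\le c\,\big\|f\,|\,M_{\px,\ux}\big\|$, and since $c$ does not depend on $\nu_0$ (this uniformity in $\nu$ is part of the conclusion of Theorem~\ref{theo:conv-eta-Morrey-seq}) nor on $f$, and $\nu_0\in\Nz$ was arbitrary, \eqref{conv-eta-Morrey} follows.

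There is no genuine obstacle here, since all the analytic work is already contained in Theorem~\ref{theo:conv-eta-Morrey-seq}. The only points one needs to be slightly careful about are that the single-entry sequence does lie in the domain $\ell_q\big(M_{\px,\ux}\big)$ of that theorem, which is immediate because its quasinorm is finite, and that the constant furnished by the theorem is uniform in $\nu$, which it is.
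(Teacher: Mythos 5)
Your proposal is correct and is exactly the argument the paper intends: apply Theorem~\ref{theo:conv-eta-Morrey-seq} with a constant exponent $q$ (so that $c_{\log}(1/q)=0$ and the size condition on $m$ reduces to the stated one), then test on sequences with a single non-zero entry and invoke Proposition~\ref{pro:noqx} to identify the mixed quasinorms with the Morrey quasinorms on both sides. The uniformity of the constant in $\nu_0$ is indeed built into the theorem, so nothing further is needed.
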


Note that \eqref{conv-eta-Morrey} extends to variable Morrey spaces the convolution inequality \eqref{con-eta-lebesgue} already known for variable Lebesgue spaces. Note that when $u(x)=p(x)$ we recover the same assumption on $m$ used in \eqref{con-eta-lebesgue}.

\section{Variable exponent Besov-Morrey spaces}\label{sec:besov-morrey}

In the recent years there was an increase of interest in studying so-called smoothness Morrey spaces (with constant exponents). Basically, they are function spaces built on Morrey spaces with additional information on the smoothness of their elements. Let us give some details on the definition of such spaces of Besov type.

The set $\cS$ denotes the usual Schwartz class of infinitely differentiable
rapidly decreasing complex-valued functions on $\Rn$ and $\cS'$
denotes its dual space of tempered distributions. By $\hat{f}$ we denote the Fourier transform of $f\in\cS$ given by
$$
\hat{f}(\xi):= (2\pi)^{-n/2} \int_{\Rn} e^{-i\xi\cdot x} f(x)\,dx, \quad \xi\in\Rn,
$$
and by $\check{f}$ we denote the inverse Fourier transform of $f$. Both transforms are extended to $\cS'$ in the usual way.

A pair $(\check{\varphi},\check{\Phi})$ of functions in $\cS$ is called \emph{admissible} if
\begin{itemize}
\item $\supp {\varphi} \subset \{x\in\Rn: \, \tfrac{1}{2} \le |x| \le 2 \}\, $ and $\, |{\varphi}(x)| \ge c>0$ when $\tfrac{3}{5} \le |x| \le \tfrac{5}{3}$;
\item $\supp {\Phi} \subset \{x\in\Rn: \, |x| \le 2 \}\, $ and $\, |{\Phi}(x)| \ge c>0$ when $ |x| \le \tfrac{5}{3}$.
\end{itemize}
Set $\varphi_0:=\Phi$ and $\varphi_j:=\varphi(2^{-j}\cdot)$ for $j\in\N$. Then $\varphi_j\in\cS$ for all $j\in \Nz$ and
\[
\supp {\varphi}_j \subset \{x\in\Rn: \, 2^{j-1} \le |x| \le 2^{j+1} \} \,, \ \ \ j\in\N.
\]
Accordingly, $\{\varphi_j\}$ constructed as above is called an \emph{admissible system}.

Let $s\in\R$, $0<p\leq u \leq \infty$ and $0<q\leq \infty$. The Besov-Morrey space $\cN^s_{p,u,q}$ is the set of all distributions $f\in\cS'$ such that
$$
\|f\,|\, \cN^s_{p,u,q}\| := \big\| \big(2^{j s}(\varphi_j \hat{f})^{\vee}\big)_j \, |\, \ell_q(M_{p,u})\big\| < \infty.
$$
Hence the Besov-Morrey space $\cN^s_{p,u,q}$ generalizes the usual Besov space $B^s_{p,q}$ (recall that the latter is modeled on $L_p$ spaces instead of Morrey spaces).
The  spaces $\cN^s_{p,u,q}$ were introduced by Kozono and Yamazaki \cite{KY94} motivated by applications to partial differential equations and they were further developed by Mazzucato \cite{Mazz03} in connection with the study of Navier-Stokes equations.

Besov spaces $B^{\sx}_{\px,\qx}$ on $\Rn$ with all parameters variable were introduced in \cite{AlmH10}. More recently, Besov spaces $\B$ with variable integrability have been studied in the general setting of 2-microlocal spaces, \cite{AlmC15a} and \cite{Kem09,Kem10}. Here $\w = (w_j)_{j\in\Nz}$ belongs to the class $\W$, consisting of sequence of positive measurable functions $w_j$ on $\Rn$ such that
\begin{enumerate}
\item[(i)] there exists $c>0$ such that
\begin{equation}\label{aux6}
0<w_j(x) \leq c\,w_j(y)\left(1+2^j|x-y|\right)^\alpha
\end{equation}
for all $j\in\Nz$ and $x,y\in\Rn$;
\item[(ii)] there holds
\[
2^{\alpha_1}\, w_j(x) \leq w_{j+1}(x) \leq 2^{\alpha_2}\, w_j(x)
\]
for all $j\in\Nz$ and $x\in\Rn$.
\end{enumerate}
Then we say that $\w = (w_j)_{j\in\Nz}$ constitutes an \emph{admissible weight sequence}. In the sequel the parameters $\alpha \ge 0$ and $\alpha_1, \alpha_2\in\R$ (with $\alpha_1\leq \alpha_2$) are arbitrary but fixed numbers.  We refer to \cite[Remark~2.4]{Kem08} for some useful properties of class $\W$, and to \cite[Examples~2.4--2.7]{AlmC15a} for a compilation of basic examples of admissible weight sequences.

On the other hand, variable exponent Morrey spaces $M_{\px,\ux}$ have been recently studied by many authors. Then one may ask for a full variable exponent generalization for the spaces $\cN^s_{p,u,q}$ above. Now that we have mixed Morrey-sequence spaces available, we can introduce $2$-microlocal Besov-Morrey spaces with all indices variable. This new scale provides an unification for various function spaces recently introduced in the literature.


\begin{definition} \label{def:BMspaces}
Consider an admissible system $\{\varphi_j\}$. Let $\w=(w_j)_j\in\W$ be admissible weights and let $p,q,u\in\PP$ with $0<p^-\leq p(x)\leq u(x)\leq \infty$. We define $\BN$ as the collection of all $f\in\cS'$ such that
\begin{equation*}
\big \|f\,|\,\BN \big\|:= \big\| (w_j(\varphi_j \hat{f})^{\vee})_j\,|\, \ell_\qx\big(M_{\px,\ux}\big)\big\| < \infty.
\end{equation*}
\end{definition}

In the particular case $w_j(x)=2^{js(x)}$, with $s \in C^{\log}_\loc(\Rn)$, we have Besov-Morrey spaces with variable smoothness and integrability, and we write in that case
$$\BN=\mathcal{N}^{\sx}_{\px,\ux,\qx}.$$

\begin{remark}
In view of Theorem~\ref{theo:lpqunorm} we see that $\BN$ are quasinormed spaces (normed spaces in each one of the cases listed in Theorem~\ref{theo:lpqunorm}). Moreover, as we shall see below (cf. Remark~\ref{rem:independent}) the spaces $\BN$ are independent of the admissible system $\{\varphi_j\}$ taken in its definition, at least when some parameters satisfy regularity assumptions like $\log$-H\"older continuity. This means that different admissible systems should produce equivalent quasinorms in the corresponding spaces.
\end{remark}

%

\begin{remark}
As regards the partial variable generalization  $\mathcal{N}^s_{\px,\ux,q}$ given in \cite{FuXu11} (already mentioned in the Introduction), it contains wrong arguments used in some proofs. For example, they mix results concerning two different approaches for variable Morrey spaces when working in $\Rn$ and state, in their Lemma~2.1, a maximal inequality which is not clear to hold without some restrictions on the parameters for the variable Morrey spaces considered in that paper.  Related to this, in \cite[Corollary~5.1]{GulS13} a maximal inequality is presented, however with some extra restrictions  on the parameters. Still in \cite{FuXu11}, an important vector-valued maximal inequality is given in their Theorem~2.2, but the proof uses their Lemma~2.5, which does not hold in the setting of $\Rn$.
\end{remark}

\section{Maximal functions characterization}\label{sec:PeetreMaxFunc}

For $(\psi_j)_{j} \subset \cS$ we set $\Psi_j:=\hat{\psi}_j \in\cS$. The \emph{Peetre maximal functions} of $f\in\cS'$ are defined, for every $j\in\Nz$ and $a>0$, by
\[
\big( \Psi^\ast_j f\big)_a(x):= \sup_{y\in\Rn} \frac{|(\Psi_j\ast f)(y)|}{1+|2^j(x-y)|^{a}}\,, \ \ \ x\in\Rn.
\]

We consider here the construction starting with two given functions $\psi_0,\psi_1\in\cS$ and
$$ \psi_j(x):= \psi_1(2^{-j+1}x), \quad j\in\N\setminus\{1\}, \quad x\in\Rn.$$
Then we write $\Psi_j=\hat{\psi}_j$ as mentioned above.

The Peetre maximal functions are important tools in the study of properties of various classical functions spaces. The main goal of this section is to present a characterization of the spaces $\BN$ in terms of such functions and, consequently, the independence of their definition with respect to the admissible system considered (cf. Remark~\ref{rem:independent}).

The following technical lemma gives a discrete convolution type inequality in mixed Morrey-sequence spaces. It can be proved by adapting the arguments used in the proof of \cite[Lemma~3.4]{AlmC15a}. For completeness we give the details in the Appendix.

\begin{lemma}\label{lem:discrete-convolution}
Let $p,q,u\in\PP$ with $p(x)\leq u(x)$. Let $\delta>0$. For any sequence $(g_j)_{j\in\Nz}$ of nonnegative measurable functions on $\Rn$, we denote
\begin{equation*}
G_\nu(x):= \sum_{j=0}^\infty 2^{-|\nu-j|\delta} g_j(x), \quad x\in\Rn,\quad \nu\in\Nz.
\end{equation*}
Then it holds
\begin{equation}\label{ineq:discrete-convolution}
\big\| (G_\nu)_\nu\,|\, \ell_\qx\big(M_{\px,\ux}\big)\big\| \lesssim \big\| (g_j)_j\,|\, \ell_\qx\big(M_{\px,\ux}\big)\big\|.
\end{equation}
\end{lemma}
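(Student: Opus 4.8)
The plan is to reduce the discrete convolution estimate \eqref{ineq:discrete-convolution} to the convolution inequality of Theorem~\ref{theo:conv-eta-Morrey-seq} by realising the weights $2^{-|\nu-j|\delta}$ as (essentially) values of $\eta$-functions, exactly as in the proof of \cite[Lemma~3.4]{AlmC15a}. First I would split the double sum defining $G_\nu$ into the part with $j\le \nu$ and the part with $j>\nu$, i.e. write $G_\nu = G_\nu^{(1)} + G_\nu^{(2)}$ with $G_\nu^{(1)}(x)=\sum_{j\le\nu}2^{-(\nu-j)\delta}g_j(x)$ and $G_\nu^{(2)}(x)=\sum_{j>\nu}2^{-(j-\nu)\delta}g_j(x)$, and treat the two pieces separately; by the quasi-triangle inequality for $\ell_\qx(M_{\px,\ux})$ (Theorem~\ref{theo:lpqunorm} together with Proposition~\ref{pro:reduction}, or more precisely the $t$-quasitriangle inequality obtained there) it suffices to estimate each piece.

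For the ``lower'' part $G_\nu^{(1)}$, the key observation is that for $j\le\nu$ one has $\eta_{\nu,M}(x)\gtrsim 2^{n\nu}(1+2^\nu|x|)^{-M}$ while $g_j$ can be recovered from $\eta_{j,M}\ast g_j$ up to replacing $g_j$ by a suitable maximal-type majorant; in practice, following \cite{AlmC15a}, one chooses $M>n$ large (how large will be dictated below) and uses the pointwise bound $2^{-(\nu-j)\delta}g_j(x)\lesssim (\eta_{\nu,M}\ast h_j)(x)$ for an appropriate sequence $(h_j)$ built from $(g_j)$, or alternatively one directly estimates $G_\nu^{(1)}(x)\lesssim \big(\eta_{\nu,M}\ast (\textstyle\sum_{j\le\nu}2^{-(\nu-j)(\delta-\varepsilon)}g_j)\big)(x)$ using that convolution with $\eta_{\nu,M}$ dominates the ``lift'' from scale $j$ to scale $\nu$. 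The ``upper'' part $G_\nu^{(2)}$ is handled symmetrically: here scale $\nu$ is coarser than scale $j$, and one uses that summation against $2^{-(j-\nu)\delta}$ with $\delta>0$ is a contraction, combined again with an $\eta$-convolution bound, to write $G_\nu^{(2)}(x)\lesssim\sum_{j>\nu}2^{-(j-\nu)\delta'}(\eta_{j,M}\ast g_j)(x)$ for some $\delta'\in(0,\delta)$; then one applies Theorem~\ref{theo:conv-eta-Morrey-seq} to the sequence $(\eta_{j,M}\ast g_j)_j$ and absorbs the geometric factor $2^{-(j-\nu)\delta'}$ using the trivial fact (from Proposition~\ref{pro:reduction}) that shifting indices and multiplying by a summable geometric sequence is bounded on $\ell_\qx(M_{\px,\ux})$ — concretely, $\big\|(\sum_{j}2^{-|\nu-j|\delta'}a_j)_\nu\,|\,\ell_\qx(M_{\px,\ux})\big\|\lesssim\big\|(a_j)_j\,|\,\ell_\qx(M_{\px,\ux})\big\|$ when $a_j\ge0$, which itself follows by splitting and the $t$-quasitriangle inequality. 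Throughout one must pick $M$ large enough that Theorem~\ref{theo:conv-eta-Morrey-seq} applies, i.e. $M>n+c_{\log}(1/q)+n\max\{0,\sup_x(\tfrac1{p(x)}-\tfrac1{u(x)})-\tfrac1{p_\infty}\}$; note $M$ does not appear in the final statement, so any admissible choice is fine, and the losses in $\delta$ (passing to $\delta-\varepsilon$ or $\delta'$) are harmless since $\delta>0$ is fixed and $\varepsilon$ can be taken as small as needed.

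The main obstacle I anticipate is the bookkeeping needed to justify the pointwise domination of the geometrically weighted shifts by $\eta$-convolutions in a way compatible with the \emph{mixed} (non-iterated) structure of $\ell_\qx(M_{\px,\ux})$: one cannot simply pull the sum out of a norm and use the scalar $\ell_\qx$ triangle inequality, because the quasinorm is genuinely mixed when $q$ is non-constant. This is precisely why the estimate must be routed through the convolution inequality \eqref{conv-eta-mixed-morrey} rather than through a naive summation, and why the splitting into $j\le\nu$ and $j>\nu$ (so that on each piece the relevant scale is monotone in the summation index) is essential. A secondary point to be careful about is that when $q^-<\infty$ the $t$-quasitriangle inequality must be applied with an exponent $t\le\min\{1,q^-\}$ (or $t\le \min\{1,p^-,q^-\}$ as in the underlying $L_\px$ estimate) so that the geometric series of $t$-th powers still converges, using that $\delta>0$; the constants then depend on $p,q,u,\delta,M,n$ but not on the sequence $(g_j)$, which is all that is claimed. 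Once these points are set up, the Appendix proof is a fairly mechanical adaptation of \cite[Lemma~3.4]{AlmC15a} with $L_\px$ replaced by $M_{\px,\ux}$ and Lemma~4.7 of \cite{AlmH10} replaced by Theorem~\ref{theo:conv-eta-Morrey-seq}.
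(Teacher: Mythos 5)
Your proposal routes the estimate through the $\eta$-convolution inequality of Theorem~\ref{theo:conv-eta-Morrey-seq}, and this is where it breaks down, in three ways. First, Theorem~\ref{theo:conv-eta-Morrey-seq} requires $p\in\PPlog$ with $p^-\geq 1$ and $1/q$ locally $\log$-H\"older continuous, whereas Lemma~\ref{lem:discrete-convolution} is stated (and used) for arbitrary $p,q,u\in\PP$ with $p(x)\le u(x)$; any proof passing through that theorem proves a strictly weaker statement. Second, and more fundamentally, the pointwise domination you rely on --- $2^{-(\nu-j)\delta}g_j(x)\lesssim(\eta_{\nu,M}\ast h_j)(x)$ with $\|(h_j)_j\|\lesssim\|(g_j)_j\|$, or $G_\nu^{(1)}\lesssim\eta_{\nu,M}\ast(\cdots)$ --- is false for general nonnegative measurable $g_j$. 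Estimates of the form $g(x)\lesssim(\eta_{\nu,M}\ast g)(x)$ are Plancherel--Polya--Nikolskii phenomena valid for band-limited functions (which is the setting of the Peetre maximal function arguments in Section~\ref{sec:PeetreMaxFunc}); for a generic $g_j$, e.g.\ a normalized characteristic function of a tiny ball, $\eta_{\nu,M}\ast g_j$ is an average that can be arbitrarily much smaller than $g_j$ at a point, so no such majorant with controlled norm exists. Third, your closing step invokes the ``trivial fact'' that $\bigl\|\bigl(\sum_j 2^{-|\nu-j|\delta'}a_j\bigr)_\nu\,|\,\ell_\qx(M_{\px,\ux})\bigr\|\lesssim\|(a_j)_j\,|\,\ell_\qx(M_{\px,\ux})\|$ ``by splitting and the $t$-quasitriangle inequality''. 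That is exactly the statement of the lemma (with $\delta'$ in place of $\delta$), so the argument is circular; and as you yourself observe earlier in the proposal, one cannot pull an infinite sum out of the mixed quasinorm by iterating the quasi-triangle inequality, since Proposition~\ref{pro:reduction} only provides a constant $A\ge 1$ whose powers accumulate over infinitely many terms.

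The paper's proof is entirely different and much more elementary: it never touches $\eta$-functions. One first reduces to $p,q\ge 1$ by taking $t\in(0,\min\{1,p^-,q^-\})$, using \eqref{t-power} and the pointwise bound $G_\nu^t\le\sum_j 2^{-|\nu-j|\delta t}g_j^t$. For $p,q\ge 1$ one works directly with the semimodular: by the unit ball property it suffices to bound $\varrho_{\ell_\qx(M_{\px,\ux})}\bigl((G_\nu)_\nu/(c\mu)\bigr)$ with $\mu=\|(g_j)_j\|$, one applies Minkowski's inequality in $L_\px$ inside each infimum, and the heart of the matter is a superadditivity argument showing that the infimum defining the modular of the sum is dominated by $\sum_{l}I_{\nu,l}(x,r)$, where each $I_{\nu,l}$ carries an extra damping factor $2^{-|l|\delta/2}$; one then interchanges the supremum with the sum over $l$ (using $q\ge1$) and sums the geometric series. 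That infimum-splitting step is precisely the device that replaces the unavailable triangle inequality in the mixed structure, and it is the idea missing from your proposal.
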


The next theorem compares maximal functions built from different starting functions.

\begin{theorem}\label{thm:different-Peetre}
Let $\w\in\W$ and $p,q,u\in\PP$ with $p(x)\leq u(x)$. Let $a>0$ and $R\in\Nz$ with $R>\alpha_2$. Let further $\psi_0,\psi_1\in\cS$ with
\begin{equation*}
D^{\beta} \psi_1(0)=0 \ \ \ \text{for} \ \ 0\le |\beta| < R,
\end{equation*}
and $\phi_0,\phi_1\in\cS$ with
\begin{equation*}
|\phi_0(x)|>0 \ \ \ \text{on} \ \ \ \{x\in\Rn: |x|\le k\varepsilon\},
\end{equation*}
\begin{equation*}
|\phi_1(x)|>0 \ \ \ \text{on} \ \ \ \big\{x\in\Rn: \varepsilon\le |x|\le 2k\varepsilon\big\}
\end{equation*}
for some $\varepsilon >0$ and $k\in(1,2]$. Then
$$
\big\| \big( \big(\Psi^\ast_j f\big)_a w_j \big)_j\,|\, \ell_\qx\big(M_{\px,\ux}\big)\big\| \lesssim \big\| \big( \big(\Phi^\ast_j f\big)_a w_j \big)_j\,|\, \ell_\qx\big(M_{\px,\ux}\big)\big\|
$$
holds for every $f\in\cS'$.
\end{theorem}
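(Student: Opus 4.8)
The overall plan is to separate out the variable-exponent Morrey ingredients, which enter only through Lemma~\ref{lem:discrete-convolution}, from the analytic part, which is the classical Peetre-maximal-function technique and is insensitive to the underlying function space. First I would establish the pointwise inequality
\begin{equation*}
w_\nu(x)\,\big(\Psi^\ast_\nu f\big)_a(x)\;\lesssim\;\sum_{k=0}^{\infty} 2^{-|k-\nu|\delta}\,w_k(x)\,\big(\Phi^\ast_k f\big)_a(x),\qquad x\in\Rn,\ \nu\in\Nz,
\end{equation*}
for some $\delta>0$, with a constant independent of $f\in\cS'$, $x$ and $\nu$ (for a given $\nu$ there is nothing to prove if the right-hand side is infinite). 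Granting it, put $g_k:=w_k\,(\Phi^\ast_k f)_a\ge 0$; the right-hand side above is then precisely the sequence $(G_\nu)_\nu$ of Lemma~\ref{lem:discrete-convolution}, and by monotonicity of the quasinorm $\big\|\cdot\,|\,\ell_\qx\big(M_{\px,\ux}\big)\big\|$ followed by that lemma,
$$\big\| \big( w_\nu(\Psi^\ast_\nu f)_a \big)_\nu\,|\, \ell_\qx\big(M_{\px,\ux}\big)\big\|\;\lesssim\;\big\| (G_\nu)_\nu\,|\, \ell_\qx\big(M_{\px,\ux}\big)\big\|\;\lesssim\;\big\| (g_k)_k\,|\, \ell_\qx\big(M_{\px,\ux}\big)\big\|,$$
which is the assertion.

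For the pointwise inequality I would run the standard argument from the constant-exponent theory. From the non-degeneracy of $\phi_0$ on $\{|x|\le k\varepsilon\}$ and of $\phi_1$ on $\{\varepsilon\le|x|\le 2k\varepsilon\}$ (whose dyadic dilates cover $\Rn$ because $k>1$) one constructs $\lambda_0,\lambda_1\in\cS$, with $\lambda_j:=\lambda_1(2^{-j+1}\cdot)$ for $j\ge 2$, such that $\sum_{j\ge 0}\lambda_j\phi_j\equiv 1$ on $\Rn$; this furnishes a reproduction formula for $f\in\cS'$, and after convolving with $\Psi_\nu$ and reorganizing one gets $\Psi_\nu\ast f=\sum_{j\ge 0}K_{\nu,j}\ast(\Phi_j\ast f)$ with kernels $K_{\nu,j}$ assembled from $\psi_\nu$ and $\lambda_j$. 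The core estimate is the kernel bound: for every $L>0$,
\begin{equation*}
|K_{\nu,j}(z)|\;\lesssim\;2^{-|j-\nu|N}\;2^{(\nu\wedge j)n}\,\big(1+2^{\nu\wedge j}|z|\big)^{-L},
\end{equation*}
where $N$ may be chosen arbitrarily large when $j>\nu$ (rapid decay of the Schwartz function $\psi_1$ away from the origin), while $N=R$ when $j<\nu$. The latter is where the moment conditions $D^{\beta}\psi_1(0)=0$, $|\beta|<R$, are used: on the frequency support of $\lambda_j\phi_j$ one has $|\xi|\sim 2^{j}\ll 2^\nu$, so $\psi_\nu(\xi)=\psi_1(2^{-\nu+1}\xi)$ is of size $\lesssim|2^{-\nu+1}\xi|^{R}\sim 2^{-(\nu-j)R}$.

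Inserting these bounds, estimating $(\Phi_j\ast f)(y)$ by $\big(1+|2^j(x-y)|^a\big)\,(\Phi^\ast_j f)_a(x)$, and computing the convolution (convergent once $L>a+n$, the surplus in $L$ being spent on extra decay in $|j-\nu|$), one arrives at $(\Psi^\ast_\nu f)_a(x)\lesssim\sum_{j\ge 0}2^{-|j-\nu|N'}(\Phi^\ast_j f)_a(x)$ with $N'$ still arbitrarily large for $j>\nu$ and $N'=R-\varepsilon_0$ (any fixed $\varepsilon_0>0$) for $j<\nu$. The weights are then absorbed using property (ii) of $\W$: for $j<\nu$, $w_\nu(x)\le 2^{\alpha_2(\nu-j)}w_j(x)$, hence $2^{-(\nu-j)N'}w_\nu(x)\lesssim 2^{-(\nu-j)(R-\alpha_2-\varepsilon_0)}w_j(x)$, which decays because $R>\alpha_2$; for $j>\nu$, $w_\nu(x)\le 2^{C|j-\nu|}w_j(x)$ for some $C=C(\alpha_1,\alpha_2)$, which is killed by choosing $N'>C$. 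This gives the pointwise inequality with $\delta:=\min\{R-\alpha_2-\varepsilon_0,\,N'-C\}>0$.

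I expect the main obstacle to be the kernel estimate — specifically, extracting on the range $j<\nu$ a genuine gain of $R$ powers of $2^{-(\nu-j)}$ from the vanishing moments of $\Psi_1$ while simultaneously retaining spatial decay (uniform in $x$) strong enough to carry out the convolution and pull $(\Phi^\ast_j f)_a(x)$ out of it. This is also precisely the step that consumes the hypothesis $R>\alpha_2$, through the balance between the moment gain $R$ and the admissible growth rate $\alpha_2$ of the weight sequence. Everything downstream of the pointwise inequality is purely formal and uses only the monotonicity of $\big\|\cdot\,|\,\ell_\qx\big(M_{\px,\ux}\big)\big\|$ and the already-proved Lemma~\ref{lem:discrete-convolution}, so the variable-exponent Morrey setting adds no further difficulty there.
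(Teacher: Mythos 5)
Your proposal follows essentially the same route as the paper: reduce everything to the pointwise inequality $w_\nu(x)(\Psi^\ast_\nu f)_a(x)\lesssim\sum_{j}2^{-|j-\nu|\delta}w_j(x)(\Phi^\ast_j f)_a(x)$, which is independent of $p,q,u$, and then conclude by the lattice property of $\ell_\qx\big(M_{\px,\ux}\big)$ together with Lemma~\ref{lem:discrete-convolution}. The only difference is that the paper simply cites this pointwise inequality (with $\delta=\min\{1,R-\alpha_2\}$) from the proof of Theorem~4.9 in the Triebel--Lizorkin--Morrey companion paper, whereas you sketch its standard Rychkov-type derivation; your sketch is correct.
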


\begin{proof}
We follow the details given in the proof of \cite[Theorem~4.9]{CaeK18}: near the end of that proof the following inequality is stated:
$$
 \big(\Psi^\ast_\nu f\big)_a(x) w_\nu(x) \le c \sum_{j=0}^\infty 2^{-|j-\nu|\delta} \big(\Phi^\ast_j f\big)_a(x) w_j(x), \quad\; x\in\Rn,\quad \nu\in\Nz\quad f\in\cS',
$$
with $\delta:=\min\{1,R-\alpha_2\}$. It was derived with no reference to $p,q$ or $u$ (so, in particular, without any consideration of a concrete space of functions, apart from $\cS'$). Using now Lemma~\ref{lem:discrete-convolution} and the lattice property of $\ell_\qx\big(M_{\px,\ux}\big)$, the desired result follows.
\end{proof}

Below we use the following abbreviation:
$$c_\infty(1/p,1/u):= \max\Big\{0,\sup_{x\in\Rn}\Big(\tfrac{1}{p(x)}-\tfrac{1}{u(x)}\Big)-\tfrac{1}{p_\infty}\Big\}.$$
Note that $c_\infty(1/p,1/u)=0$ when $p(x)=u(x)$ or $p(x)=p$ is constant.

\begin{theorem}\label{thm:same-Peetre}
Let $\w\in\W$. Assume $p\in\PPlog$ and $q,u\in\PP$ with $p(x)\leq u(x)$ and $1/q$ locally $\log$-H\"older continuous. Let $\psi_0,\psi_1\in\cS$ with
\begin{equation*}
|\psi_0(x)|>0 \ \ \ \text{on} \ \ \ \{x\in\Rn: |x|\le k\varepsilon\},
\end{equation*}
\begin{equation*}
|\psi_1(x)|>0 \ \ \ \text{on} \ \ \ \big\{x\in\Rn: \varepsilon\le |x|\le 2k\varepsilon\big\}
\end{equation*}
for some $\varepsilon >0$ and $k\in(1,2]$. If
\begin{equation}\label{ineq-size-a}
a>\alpha+c_{\log}(1/q) + n\,\big(\tfrac{1}{p^-}+c_\infty(1/p,1/u)\big),
\end{equation}
then
$$
\big\| \big( \big(\Psi^\ast_j f\big)_a w_j \big)_j\,|\, \ell_\qx\big(M_{\px,\ux}\big)\big\| \lesssim \big\| \big((\Psi_j\ast f) w_j \big)_j\,|\, \ell_\qx\big(M_{\px,\ux}\big)\big\|
$$
holds for all $f\in\cS'$.
\end{theorem}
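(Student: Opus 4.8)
The plan is to adapt to the Morrey setting the proof of the analogous statement for variable Besov spaces in \cite{AlmC15a} (see also \cite{CaeK18}), the genuinely new ingredient being that the convolution inequality in $\ell_\qx(L_\px)$ exploited there is now to be replaced by its Morrey counterpart, Theorem~\ref{theo:conv-eta-Morrey-seq}, together with the discrete inequality of Lemma~\ref{lem:discrete-convolution}. As usual, one first makes a reduction: it suffices to prove the inequality with $\big(\Psi^\ast_j f\big)_a$ replaced by a truncated Peetre maximal function which is everywhere finite and a priori belongs to $\ell_\qx\big(M_{\px,\ux}\big)$ (this uses $f\in\cS'$ and $q^->0$), the general statement then following by monotone convergence; one also assumes the right-hand side finite.

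The heart of the matter is a pointwise estimate. Using only the Tauberian-type hypotheses on $\psi_0,\psi_1$, Rychkov's construction provides $\lambda_0,\lambda_1\in\cS$, with $\lambda_j:=\lambda_1(2^{-j+1}\cdot)$ for $j\ge2$ and $\lambda_1$ having an arbitrarily prescribed number of vanishing moments, such that $f=\sum_{j\ge0}\lambda_j\ast\Psi_j\ast f$ in $\cS'$, with kernels $K_{\nu,j}:=\Psi_\nu\ast\lambda_j$ satisfying $|K_{\nu,j}(z)|\lesssim 2^{-|\nu-j|S}\eta_{\min\{\nu,j\},m}(z)$ for any prescribed $S,m>0$. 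Inserting the decomposition $\Psi_\nu\ast f=\sum_j K_{\nu,j}\ast(\Psi_j\ast f)$ into the Peetre maximal function, applying Rychkov's $r$-trick with a fixed $r\in(0,p^-]$ (small enough; the residual it produces, if any, being absorbed via the preliminary reduction to the a priori finite case), and transferring the weights $w_\nu$ inside the convolutions by means of properties (i)–(ii) of the class $\W$ — which costs a factor $(1+2^{\min\{\nu,j\}}|x-\cdot|)^{\alpha}$ and powers of $2^{|\nu-j|}$, the latter absorbed into the geometric factor by taking $S$ large, and the $\min\{\nu,j\}$-scale kernels (for $j<\nu$) reduced to scale $\nu$ at the cost of further such powers — one arrives at a pointwise bound of the form
\[
\Big[\big(\Psi^\ast_\nu f\big)_a(x)\,w_\nu(x)\Big]^{r}\lesssim \sum_{j\ge0}2^{-|\nu-j|\delta}\Big(\eta_{\nu,N}\ast\big[(\Psi_j\ast f)\,w_j\big]^{r}\Big)(x),\qquad x\in\Rn,
\]
for some $\delta>0$ and $N>0$. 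The decisive point is that the admissible values of $N$ (essentially bounded above by $ar$, and required to exceed the threshold of Theorem~\ref{theo:conv-eta-Morrey-seq} for the exponents $\px/r,\qx/r,\ux/r$, i.e.\ $N>n+c_{\log}(r/q)+n\,c_\infty(r/p,r/u)$) form a nonempty range precisely when the size condition~\eqref{ineq-size-a} holds, the choice $r=p^-$ being the sharp one.

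It then remains to pass to quasinorms. Applying $\big\|\cdot\,|\,\ell_{\qx/r}\big(M_{\px/r,\ux/r}\big)\big\|$ to the displayed inequality (recall~\eqref{t-power}), using the lattice property, then Theorem~\ref{theo:conv-eta-Morrey-seq} to discard the convolutions with $\eta_{\nu,N}$, and finally Lemma~\ref{lem:discrete-convolution} (with parameter $\delta$) to carry out the summation over $j$, one obtains, once more by~\eqref{t-power},
\[
\big\|\big(\big(\Psi^\ast_\nu f\big)_a w_\nu\big)_\nu\,|\,\ell_\qx\big(M_{\px,\ux}\big)\big\|\lesssim \big\|\big((\Psi_j\ast f) w_j\big)_j\,|\,\ell_\qx\big(M_{\px,\ux}\big)\big\|,
\]
which is the assertion. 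The main obstacle is the pointwise estimate: organizing the $r$-trick and the weight transfer so that everything ends up governed by a single convolution with $\eta_{\nu,N}$ at the fine scale (in particular handling the terms $j<\nu$, where the kernel naturally lives at the coarser scale $2^{-j}$), absorbing the residual correctly, and verifying that the resulting threshold on $N$ is exactly what~\eqref{ineq-size-a} provides. Everything downstream is a routine combination of the lattice property, \eqref{t-power}, Lemma~\ref{lem:discrete-convolution} and Theorem~\ref{theo:conv-eta-Morrey-seq}.
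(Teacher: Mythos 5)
Your proposal is correct and follows essentially the same route as the paper: the paper simply cites the Rychkov-type pointwise estimate from \cite{CaeK18} (in the form $\big((\Psi^\ast_\nu f)_a w_\nu\big)^t \lesssim \sum_{j\ge\nu}2^{-(j-\nu)(N-a+\alpha_1)t}\,\eta_{j,(a-\alpha)t}\ast\big(|\Psi_j\ast f|\,w_j\big)^t$, i.e.\ with the kernel at scale $j$ and the sum restricted to $j\ge\nu$, which is equivalent to your scale-$\nu$ version up to factors absorbed in the geometric decay), chooses $t\in(0,p^-)$ so that \eqref{ineq-size-a} survives with $1/t$ in place of $1/p^-$, and then combines Lemma~\ref{lem:discrete-convolution} and Theorem~\ref{theo:conv-eta-Morrey-seq} exactly as you do (the order of these two steps being immaterial), finishing with \eqref{t-power}. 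Your sketch of the derivation of the pointwise estimate, the identification of the admissible range for $N$, and the bookkeeping showing that this range is nonempty precisely under \eqref{ineq-size-a} all match the intended argument.
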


\begin{proof}
Now we follow the proof of \cite[Theorem~4.10]{CaeK18} (and the references mentioned there) where almost all the details can be found: for $a>\alpha$ one has
$$
 \Big(\big(\Psi^\ast_\nu f\big)_a(x)\Big)^t \big(w_\nu(x)\big)^t \lesssim \sum_{j=\nu}^\infty 2^{-(j-\nu)(N-a+\alpha_1)t} \Big(\eta_{j,(a-\alpha)t} \ast \big(|\Psi_j\ast f|w_j\big)^t\Big)(x),
$$
with the involved constant independent of $f\in\cS'$, $x\in\Rn$ and $\nu\in\Nz$, and where $t>0$ and $N\in\Nz$ with $N\geq a$ are at our disposal. It is clearly stated in the mentioned proof in \cite{CaeK18} that the above estimate has nothing to do with the consideration of the parameters $p,q,u$ (so, it was obtained without any consideration of a particular space of functions, apart from $\cS'$).

Consider now $a>0$ satisfying \eqref{ineq-size-a}, $N>a+|\alpha_1|$ and $t\in(0,p^-)$ such that still
$$
a>\alpha+c_{\log}(1/q) + n\,\big(\tfrac{1}{t}+c_\infty(1/p,1/u)\big).
$$
Applying first Lemma~\ref{lem:discrete-convolution} with $\delta=(N-a+\alpha_1)t$ and then Theorem~\ref{theo:conv-eta-Morrey-seq} with $m=(a-\alpha)t$ (and $p/t$, $q/t$, $u/t$ instead of $p$, $q$, $u$, respectively), we get
\begin{eqnarray*}
\big\| \big( \big(\big(\Psi^\ast_\nu f\big)_a w_j \big)^t\big)_\nu\,|\, \ell_{\qx/t}\big(M_{\px/t,\ux/t}\big)\big\| & \lesssim & \big\| \big(\eta_{j,(a-\alpha)t} \ast \big(|\Psi_j\ast f|w_j\big)^t\big)_j\,|\, \ell_{\qx/t}\big(M_{\px/t,\ux/t}\big)\big\|\\ & \lesssim &
\big\| \big((|\Psi_j\ast f| w_j)^t \big)_j\,|\, \ell_{\qx/t}\big(M_{\px/t,\ux/t}\big)\big\|.
\end{eqnarray*}
The conclusion follows now from \eqref{t-power}.
\end{proof}

Now we are ready to formulate the main result on the characterization of the spaces $\BN$ in terms of Peetre maximal functions. Its proof works exactly as the proof of \cite[Theorem~4.5]{CaeK18} (given just before the References in \cite{CaeK18}), using now our Theorems~\ref{thm:different-Peetre} and \ref{thm:same-Peetre} above instead of \cite[Theorem~4.9]{CaeK18} and \cite[Theorem~4.10]{CaeK18}, respectively.

\begin{theorem}\label{thm:Peetre-charact}
Let $\w\in\W$. Assume $p\in\PPlog$ and $q,u\in\PP$ with $p(x)\leq u(x)$ and $1/q$ locally $\log$-H\"older continuous. Let $R\in \Nz$ with $R>\alpha_2$ and $\psi_0,\psi_1\in\cS$ with
\begin{equation*}
D^{\beta} \psi_1(0)=0 \ \ \ \text{for} \ \ 0\le |\beta| < R,
\end{equation*}
and
\begin{equation*}
|\psi_0(x)|>0 \ \ \ \text{on} \ \ \ \{x\in\Rn: |x|\le k\varepsilon\},
\end{equation*}
\begin{equation*}
|\psi_1(x)|>0 \ \ \ \text{on} \ \ \ \big\{x\in\Rn: \varepsilon\le |x|\le 2k\varepsilon\big\}
\end{equation*}
for some $\varepsilon >0$ and $k\in(1,2]$. If $a>\alpha+c_{\log}(1/q) + n\,\big(\tfrac{1}{p^-}+c_\infty(1/p,1/u)\big)$, then
$$
\big \|f\,|\,\BN \big\| \approx \big\| \big((\Psi_j\ast f) w_j \big)_j\,|\, \ell_\qx\big(M_{\px,\ux}\big)\big\| \approx \big\| \big( \big(\Psi^\ast_j f\big)_a w_j \big)_j\,|\, \ell_\qx\big(M_{\px,\ux}\big)\big\|
$$
holds for all $f\in\cS'$.
\end{theorem}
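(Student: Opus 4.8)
The plan is to follow the scheme of the proof of \cite[Theorem~4.5]{CaeK18}, deducing the three-way equivalence from Theorems~\ref{thm:different-Peetre} and \ref{thm:same-Peetre} together with the lattice property of $\ell_\qx\big(M_{\px,\ux}\big)$. Two observations are essentially free. First, taking $y=x$ in the supremum defining the Peetre maximal function gives $(\Psi^\ast_j f)_a(x)\ge |(\Psi_j\ast f)(x)|$ pointwise, so the lattice property yields
\[
\big\| \big((\Psi_j\ast f) w_j \big)_j \,|\, \ell_\qx\big(M_{\px,\ux}\big)\big\| \;\le\; \big\| \big( \big(\Psi^\ast_j f\big)_a w_j \big)_j\,|\, \ell_\qx\big(M_{\px,\ux}\big)\big\|,
\]
and the same holds with $\Psi$ replaced by the kernels underlying the admissible system of Definition~\ref{def:BMspaces}. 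Second, if $\{\varphi_j\}$ is that admissible system, then $(\varphi_j\hat f)^\vee = \Phi_j\ast f$ with $\Phi_j=\hat\phi_j$ for a pair $\phi_0,\phi_1\in\cS$ obtained from $\varphi_0$ and $\varphi$ by a reflection; by the support and non-vanishing conditions built into the notion of an admissible system (and the fact that $\varphi$ is supported away from the origin, whence $D^\beta\phi_1(0)=0$ for all $\beta$), the pair $\phi_0,\phi_1$ meets simultaneously the non-vanishing hypotheses of Theorems~\ref{thm:different-Peetre} and \ref{thm:same-Peetre} and, for every $R$, the moment conditions appearing there; moreover $\phi_j=\phi_1(2^{-j+1}\cdot)$ for $j\ge 2$, so the dilation structure matches that of Section~\ref{sec:PeetreMaxFunc}. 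In particular $\big\|f\,|\,\BN\big\|=\big\|\big((\Phi_j\ast f)w_j\big)_j\,|\,\ell_\qx\big(M_{\px,\ux}\big)\big\|$.

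With these in place I would chain the estimates. Since the given $\psi_0,\psi_1$ satisfy by hypothesis both the moment conditions and the non-vanishing conditions, and since $a$ satisfies \eqref{ineq-size-a}, Theorem~\ref{thm:same-Peetre} applies to both pairs $(\psi_0,\psi_1)$ and $(\phi_0,\phi_1)$, while Theorem~\ref{thm:different-Peetre} applies with these two pairs in either order (using $R>\alpha_2$ from the hypothesis in one direction and the infinite-order vanishing of $\phi_1$ at the origin in the other). Writing all norms in $\ell_\qx\big(M_{\px,\ux}\big)$, this gives
\[
\big\| \big((\Psi_j\ast f) w_j \big)_j\big\| \le \big\| \big( \big(\Psi^\ast_j f\big)_a w_j \big)_j\big\| \lesssim \big\| \big( \big(\Phi^\ast_j f\big)_a w_j \big)_j\big\| \lesssim \big\| \big((\Phi_j\ast f) w_j \big)_j\big\| = \big\|f\,|\,\BN\big\|,
\]
where the three inequalities are, respectively, the trivial bound, Theorem~\ref{thm:different-Peetre} (with $\psi$ in the moment role and $\phi$ in the non-vanishing role), and Theorem~\ref{thm:same-Peetre} for $(\phi_0,\phi_1)$; symmetrically, swapping the roles of the two pairs,
\[
\big\|f\,|\,\BN\big\| = \big\| \big((\Phi_j\ast f) w_j \big)_j\big\| \le \big\| \big( \big(\Phi^\ast_j f\big)_a w_j \big)_j\big\| \lesssim \big\| \big( \big(\Psi^\ast_j f\big)_a w_j \big)_j\big\| \lesssim \big\| \big((\Psi_j\ast f) w_j \big)_j\big\|.
\]
Reading the two chains together yields $\big\|f\,|\,\BN\big\| \approx \big\| \big((\Psi_j\ast f) w_j \big)_j\big\| \approx \big\| \big( \big(\Psi^\ast_j f\big)_a w_j \big)_j\big\|$ for every $f\in\cS'$.

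The only genuinely delicate point is the bookkeeping behind the second observation: one must verify carefully that the functions underlying a general admissible system, once Fourier-reflected and with the index normalization $\varphi_j=\varphi(2^{-j}\cdot)$ reconciled with $\psi_j=\psi_1(2^{-j+1}\cdot)$, really do satisfy the hypotheses of Theorems~\ref{thm:different-Peetre} and \ref{thm:same-Peetre}, and that $\Psi_j\ast f$ and $(\Psi^\ast_j f)_a$ are well defined for $f\in\cS'$ — this is exactly the type of routine verification carried out in \cite{CaeK18} and it transfers verbatim, so I would simply cite it. Everything else is the mechanical chaining above. Finally, since the equivalence is obtained for one admissible value of $a$, the monotonicity $a'\ge a \Rightarrow (\Psi^\ast_j f)_{a'}\le (\Psi^\ast_j f)_{a}$ together with the lattice property extends it to every $a$ above the threshold in \eqref{ineq-size-a}, as claimed.
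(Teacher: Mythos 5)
Your proposal is correct and takes essentially the same route as the paper: the paper's proof consists precisely of invoking the chaining scheme of \cite[Theorem~4.5]{CaeK18} with Theorems~\ref{thm:different-Peetre} and \ref{thm:same-Peetre} in place of the corresponding results there, which is exactly the two symmetric chains you write out, together with the routine verification that the Fourier-reflected admissible system satisfies the hypotheses of both auxiliary theorems. (Your closing monotonicity remark about larger $a$ is superfluous, since the direct argument already applies to every $a$ above the threshold, but it is harmless.)
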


\begin{remark}\label{rem:independent}
Notice that the above theorem contains the conclusion that the spaces $\BN$ given in Definition~\ref{def:BMspaces} are independent of the admissible system considered, when $p\in\PPlog$ and $1/q$ is locally $\log$-H\"older continuous. Note also that Theorem~\ref{thm:Peetre-charact} generalizes to the Morrey setting the maximal function characterization \cite[Theorem~3.1]{AlmC15a} already established in the Lebesgue setting. In particular, the assumption $a>\alpha+c_{\log}(1/q) + n\,\big(\tfrac{1}{p^-}+c_\infty(1/p,1/u)\big)$ agrees with the corresponding one in \cite[Theorem~3.1]{AlmC15a} since $c_\infty(1/p,1/u)=0$ when $p(x)=u(x)$.
\end{remark}

\section{Atomic and molecular characterizations}\label{sec:atomic}

In this section we characterize the spaces $\BN$ in terms of atoms and molecules. As a by-product, we establish the embeddings
$$\cS \hookrightarrow \BN \hookrightarrow \cS'$$
and, moreover, we show that $\BN$ are complete spaces.

We use the notation $Q_{jm}$, with $j\in\Nz$ and $m\in\Zn$, for the closed cube with sides parallel to the coordinate axes, centred at $2^{-j}m$ and with side length $2^{-j}$. By $\chi_{jm}$ we denote the corresponding characteristic function. The notation $d\,Q_{jm}$, $d>0$, will stand for the closed cube concentric with $Q_{jm}$ and of side length $d2^{-j}$.

The building blocks we are interested in are defined as follows:

\begin{definition}[Atoms]
Let $K,L\in\Nz$ and $d>1$. For each $j\in\Nz$ and $m\in\Zn$, a $C^K$-function $a_{jm}$ on $\Rn$ is called a $(K,L,d)$-atom (supported near $Q_{jm}$) if
\begin{equation*}
\supp a_{jm} \subset d\,Q_{jm}\,,
\end{equation*}
\begin{equation*}
\sup_{x\in\Rn} |D^\gamma a_{jm}(x)| \le 2^{|\gamma|j}\,, \ \ \text{for} \ \ 0\le |\gamma| \le K,
\end{equation*}
and
\begin{equation}\label{atom-moments}
\int_{\Rn} x^\gamma\, a_{jm}(x)\, dx=0\,, \ \ \text{for} \ \ j\geq 1 \ \ \text{and} \ \ 0\le |\gamma| <L.
\end{equation}
\end{definition}


\begin{definition}[Molecules]
Let $K,L\in\Nz$ and $M>0$. For each $j\in\Nz$ and $m\in\Zn$, a $C^K$-function $[aa]_{jm}$ on $\Rn$ is called a $(K,L,M)$-molecule (concentrated near $Q_{jm}$) if
\begin{equation*}
|D^\gamma [aa]_{jm}(x)| \le 2^{|\gamma|j} (1+2^j|x-2^{-j}m|)^{-M}\,, \ \ \ \ x\in\Rn, \ \ 0\le |\gamma| \le K,
\end{equation*}
and
\begin{equation*}
\int_{\Rn} x^\gamma\, [aa]_{jm}(x)\, dx=0\,, \ \ \text{for} \ \ j\geq 1 \ \ \text{and} \ \ 0\le |\gamma| <L.
\end{equation*}
\end{definition}


\begin{remark}\label{atom-is-molecule}
$\!\!\!$
\begin{enumerate}
\item[(i)] A $(K,0,d)$-atom is an atom for which condition \eqref{atom-moments} is not required (so, no moment conditions are required). A similar interpretation applies to a $(K,0,M)$-molecule.
\item[(ii)] It is easy to check that if $a_{jm}$ is a $(K,L,d)$-atom (so supported near $Q_{jm}$), then, given any $M>0$, $\big(1+\tfrac{d\sqrt{n}}{2}\big)^{-M}a_{jm}$ is a $(K,L,M)$-molecule concentrated near $Q_{jm}$.
\end{enumerate}
\end{remark}

For the atomic and molecular representations of the spaces $\BN$ we need to introduce appropriate sequence spaces.

\begin{definition}
Let $\w\in\W$ and $p,q,u\in\PP$ with $p(x)\leq u(x)$. The set $\bns$  consists of all (complex-valued) sequences $\lambda=(\lambda_{jm})_{j\in\Nz\;\; \atop m\in\Zn}$ such that
\begin{equation*}
\|\lambda \,| \bns\|:= \left\| \left( \sum_{m\in\Zn} \lambda_{jm}\,w_j(2^{-j}m)\, \chi_{jm}(\cdot) \right)_j\, | \ell_\qx(M_{\px,\ux}) \right\| < \infty.
\end{equation*}
\end{definition}

Using standard arguments, it is easy to see that the set $\bns$ become a quasinormed space equipped with the functional above. Furthermore, taking into account the properties of class $\W$, we can use $w_j(x)$ in place of $w_j(2^{-j}m)$ in the expression above, up to equivalent quasinorms.

For short we will write $(\lambda_{jm})$ instead of $(\lambda_{jm})_{j\in\Nz\;\; \atop m\in\Zn}$  when it is clear we are working with the exhibited set of indices. 

Below we shall use the next embedding. Its proof will be given in the Appendix.

\begin{lemma}\label{lem:embedding-qinfty}
For every $\w\in\W$ and $p,q,u\in\PP$, with $p(x)\leq u(x)$, it holds
$$ n^{\textbf{\textit{w}}}_{\px,\ux,\qx} \hookrightarrow n^{\textbf{\textit{w}}}_{\px,\ux,\infty}.$$
\end{lemma}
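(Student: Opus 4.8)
The plan is to reduce the claimed embedding to the corresponding embedding $\ell_\qx\big(M_{\px,\ux}\big)\hookrightarrow\ell_\infty\big(M_{\px,\ux}\big)$ at the level of the underlying mixed sequence spaces. Writing $g_j := \sum_{m\in\Zn} \lambda_{jm}\,w_j(2^{-j}m)\,\chi_{jm}$, the very definition of the sequence spaces gives $\|\lambda\,|\,\bns\| = \big\|(g_j)_j\,|\,\ell_\qx(M_{\px,\ux})\big\|$ and $\|\lambda\,|\,n^{\textbf{\textit{w}}}_{\px,\ux,\infty}\| = \big\|(g_j)_j\,|\,\ell_\infty(M_{\px,\ux})\big\|$, so it suffices to prove that
$$
\big\|(f_\nu)_\nu\,|\,\ell_\infty(M_{\px,\ux})\big\| \le \big\|(f_\nu)_\nu\,|\,\ell_\qx(M_{\px,\ux})\big\|
$$
for every sequence $(f_\nu)_\nu\subset\mathcal{M}(\Rn)$; then $\|\lambda\,|\,\bns\|<\infty$ automatically forces $\lambda\in n^{\textbf{\textit{w}}}_{\px,\ux,\infty}$, and the (norm-one) embedding follows.

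First I would invoke Proposition~\ref{pro:really-iterated} with the constant exponent $q=\infty$ to rewrite the left-hand side as $\sup_{\nu_0\in\Nz}\big\|f_{\nu_0}\,|\,M_{\px,\ux}\big\|$. Then I would fix $\nu_0\in\Nz$ and compare $(f_\nu)_\nu$ with the one-entry sequence $(0,\dots,0,f_{\nu_0},0,\dots)$: since in each summand of \eqref{def:lpqumod} the semimodular $\varrho_\px$ is monotone and the operations $\inf_\lambda$ and $\sup_{x,r}$ are order preserving, the functional $\varrho_{\ell_\qx(M_{\px,\ux})}$ is monotone, so that $\big\|\cdot\,|\,\ell_\qx(M_{\px,\ux})\big\|$ enjoys the lattice property; as the one-entry sequence is dominated entrywise by $(f_\nu)_\nu$, this yields
$$
\big\|(0,\dots,0,f_{\nu_0},0,\dots)\,|\,\ell_\qx(M_{\px,\ux})\big\| \le \big\|(f_\nu)_\nu\,|\,\ell_\qx(M_{\px,\ux})\big\|.
$$
By Proposition~\ref{pro:noqx} the left-hand side here equals $\big\|f_{\nu_0}\,|\,M_{\px,\ux}\big\|$; taking the supremum over $\nu_0\in\Nz$ then closes the argument.

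This proof is short and there is no genuinely hard step; the only point worth spelling out carefully is the lattice property of $\ell_\qx(M_{\px,\ux})$, which is not stated explicitly in the text but follows immediately from the monotonicity of $\varrho_\px$ together with the fact that taking infima over $\lambda$ and suprema over $(x,r)$ preserves the pointwise order (hence $\varrho_{\ell_\qx(M_{\px,\ux})}$ is order preserving, which passes to the induced quasinorm via its defining infimum). No regularity ($\log$-H\"older) assumptions on the exponents are needed, in accordance with the hypotheses of the lemma.
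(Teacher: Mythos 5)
Your proof is correct, but it takes a genuinely different route from the paper's. The paper makes the same reduction to the inequality $\|(f_j)_j\,|\,\ell_\infty(M_{\px,\ux})\| \le \|(f_j)_j\,|\,\ell_\qx(M_{\px,\ux})\|$, but then argues directly at the semimodular level: after normalizing so that the right-hand side is at most one, the unit ball property gives $\varrho_{\ell_\qx(M_{\px,\ux})}\big((f_j)_j\big)\le 1$, hence each inner infimum $\inf\{\lambda>0:\varrho_\px(r^{\frac{n}{u(x)}-\frac{n}{p(x)}}f_j\chi_{B(x,r)}/\lambda^{1/\qx})\le 1\}$ is at most one; a separate auxiliary lemma, proved from the left-continuity and monotonicity of $\varrho_\px$, upgrades this to $\varrho_\px\big(r^{\frac{n}{u(x)}-\frac{n}{p(x)}}f_j\chi_{B(x,r)}\big)\le 1$, which makes the $\ell_\infty$ semimodular vanish and closes the argument via the unit ball property again. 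You instead recycle two results already established in Section~\ref{sec:mixedmorrey}: Proposition~\ref{pro:really-iterated} with constant exponent $\infty$ to identify the left-hand side with $\sup_{\nu_0}\|f_{\nu_0}\,|\,M_{\px,\ux}\|$, and Proposition~\ref{pro:noqx} together with the lattice property to dominate each $\|f_{\nu_0}\,|\,M_{\px,\ux}\|$ by the full $\ell_\qx(M_{\px,\ux})$-quasinorm. Both arguments are sound; yours is shorter on the page essentially because the left-continuity work that the paper isolates in its auxiliary lemma is already hidden inside the proof of Proposition~\ref{pro:noqx} (the $(1+\varepsilon_2)^{1/q^-}$ manipulation there). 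Your remark that the lattice property of $\ell_\qx(M_{\px,\ux})$ needs justification is well taken --- the paper uses it freely elsewhere (e.g.\ in Theorem~\ref{thm:different-Peetre}) --- and your justification via the order-preservation of $\varrho_\px$, of the infimum over $\lambda$, and of the supremum over $(x,r)$ is the correct one.
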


The next statement shows that any Besov-Morrey function can somehow be written as a linear combination of atoms.


\begin{theorem}\label{thm:atomic1}
Let $\w\in\W$, $p\in\PPlog$ and $q,u\in\PP$ with $p(x)\leq u(x)$ and $1/q$ locally $\log$-H\"older continuous. Further, let $K,L\in\Nz$ and $d>1$. For each $f\in \BN$ there exist $(K,L,d)$-atoms $a_{jm}\in\cS$ , $j\in\Nz$, $m\in\Zn$, and $\lambda(f)\in\bns$ such that
\begin{equation}\label{sum-convergence}
f=\sum_{j=0}^\infty \sum_{m\in\Zn} \lambda_{jm}(f)\, a_{jm} \quad\quad \text{(convergence in $\cS'$)}
\end{equation}
and there exists a constant $c>0$ (independent of $f$) such that
$$\big\| \lambda(f)\,|\,\bns\big\| \leq c\, \big \|f\,|\,\BN \big\|.$$
\end{theorem}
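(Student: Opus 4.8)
The plan is to follow the classical smooth atomic decomposition scheme of Frazier--Jawerth, adapted to the $2$-microlocal variable-exponent setting as in \cite{AlmC15a,AlmC16b,CaeK18}, routing all the exponent- and Morrey-specific difficulties through Theorem~\ref{thm:Peetre-charact}, Lemma~\ref{lem:discrete-convolution} and the lattice property of $\ell_\qx\big(M_{\px,\ux}\big)$. First I would fix $\psi_0,\psi_1\in\cS$ satisfying the hypotheses of Theorem~\ref{thm:Peetre-charact} (so $\psi_1$ vanishes to order $R>\alpha_2$ at the origin and $\psi_0,\psi_1$ fulfil the Tauberian non-vanishing conditions stated there), set $\Psi_j:=\hat\psi_j$, and fix once and for all an exponent $a>\alpha+c_{\log}(1/q)+n\,\big(\tfrac{1}{p^-}+c_\infty(1/p,1/u)\big)$. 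By the standard $\varphi$-transform construction one obtains a compatible synthesis pair $\theta_0,\theta_1\in C^\infty_c(\Rn)$, with $\theta_1$ having vanishing moments up to order $L_0-1$ for a large integer $L_0\ge L$ to be pinned down in the last step, such that the Calder\'on-type identity $f=\sum_{\nu\ge 0}\theta_\nu*(\Psi_\nu*f)$ holds in $\cS'$ for every $f\in\cS'$ (with $\theta_\nu$ the appropriate $2^{\nu}$-dilate of $\theta_1$ for $\nu\ge1$). Since $\theta_\nu$ has compact support and $\sum_{m\in\Zn}\chi_{Q_{\nu m}}=1$ a.e., each summand decomposes as a locally finite sum
\[
\theta_\nu*(\Psi_\nu*f)=\sum_{m\in\Zn}b_{\nu m},\qquad b_{\nu m}:=\theta_\nu*\big((\Psi_\nu*f)\,\chi_{Q_{\nu m}}\big).
\]
I would then set $\lambda_{\nu m}(f):=c_{K}\,\sup_{y\in Q_{\nu m}}|(\Psi_\nu*f)(y)|$, with $c_K$ a constant depending only on $K$ and $\theta$ fixed in the next step, and $a_{\nu m}:=b_{\nu m}/\lambda_{\nu m}(f)$ when $\lambda_{\nu m}(f)>0$, $a_{\nu m}:=0$ otherwise, so that $f=\sum_{\nu,m}\lambda_{\nu m}(f)\,a_{\nu m}$, i.e.\ \eqref{sum-convergence}, once the convergence of the double series is justified.

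Next I would verify that each $a_{\nu m}$ is a $(K,L,d)$-atom lying in $\cS$. Membership in $\cS$ and the support condition $\supp a_{\nu m}\subset d\,Q_{\nu m}$ are immediate (for $d>1$ large enough, depending only on $\supp\theta$), since $b_{\nu m}\in C^\infty_c$ is supported in a fixed dilate of $Q_{\nu m}$. From $\|D^\gamma\theta_\nu\|_\infty\lesssim 2^{\nu(n+|\gamma|)}$ and $\int_{Q_{\nu m}}|\Psi_\nu*f|\le 2^{-\nu n}\sup_{Q_{\nu m}}|\Psi_\nu*f|$ one gets $\|D^\gamma b_{\nu m}\|_\infty\lesssim_{K,\theta}2^{\nu|\gamma|}\sup_{Q_{\nu m}}|\Psi_\nu*f|$ for $|\gamma|\le K$; taking $c_K$ equal to this implied constant yields $|D^\gamma a_{\nu m}|\le 2^{\nu|\gamma|}$. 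The moment conditions of $b_{\nu m}$, hence of $a_{\nu m}$, for $\nu\ge1$ and $|\gamma|<L_0$ (in particular for $|\gamma|<L$) follow from those of $\theta_1$ by Fubini and the binomial expansion of $(x+y)^\gamma$; for $\nu=0$ no moments are required. The given $K$ and $d$ are absorbed in the freedom in $c_K$ and $d$, and $L$ by the choice $L_0\ge L$.

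The heart of the proof is the estimate $\|\lambda(f)\,|\,\bns\|\lesssim\|f\,|\,\BN\|$ with constant independent of $f$. Fix $\nu$ and $x\in Q_{\nu m}$; since $|2^\nu(x-y)|\le\sqrt n$ for $y\in Q_{\nu m}$, one has $\sup_{y\in Q_{\nu m}}|(\Psi_\nu*f)(y)|\le(1+n^{a/2})\,(\Psi^\ast_\nu f)_a(x)$, and by property (i) of the class $\W$, $w_\nu(2^{-\nu}m)\lesssim w_\nu(x)$. Summing over $m$ and using that the cubes $Q_{\nu m}$ tile $\Rn$ gives $\sum_{m}\lambda_{\nu m}(f)\,w_\nu(2^{-\nu}m)\,\chi_{\nu m}(x)\lesssim(\Psi^\ast_\nu f)_a(x)\,w_\nu(x)$. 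By the lattice property of $\ell_\qx\big(M_{\px,\ux}\big)$ this yields
\[
\|\lambda(f)\,|\,\bns\|\lesssim\big\|\big((\Psi^\ast_\nu f)_a\,w_\nu\big)_\nu\,\big|\,\ell_\qx\big(M_{\px,\ux}\big)\big\|,
\]
which by Theorem~\ref{thm:Peetre-charact} (applicable precisely because of our choice of $a$ and of $\psi_0,\psi_1$) is $\approx\|f\,|\,\BN\|$. This proves $\lambda(f)\in\bns$ together with the asserted bound.

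It remains to justify the $\cS'$-convergence of $\sum_{\nu,m}\lambda_{\nu m}(f)\,a_{\nu m}$ and its identification with $f$. I would show the series is absolutely convergent when tested against any $\phi\in\cS$: the $L_0$ vanishing moments of $a_{\nu m}$ (for $\nu\ge1$), a Taylor expansion of $\phi$ at $2^{-\nu}m$, the support and size of $a_{\nu m}$, and the rapid decay of $\phi$ give $|\langle a_{\nu m},\phi\rangle|\lesssim_{\phi,N}2^{-\nu(n+L_0)}(1+|2^{-\nu}m|)^{-N}$ for every $N$; on the other hand, from $\lambda(f)\in\bns$, the embedding $\bns\hookrightarrow n^{\textbf{\textit{w}}}_{\px,\ux,\infty}$ (Lemma~\ref{lem:embedding-qinfty}), the iterated structure of $\ell_\infty\big(M_{\px,\ux}\big)$, a crude lower bound $\|\chi_{\nu m}\,|\,M_{\px,\ux}\|\gtrsim 2^{-\nu n/p^-}$, and $w_\nu(2^{-\nu}m)\gtrsim 2^{\alpha_1\nu}(1+|m|)^{-\alpha}$ (class $\W$), one obtains the polynomial control $|\lambda_{\nu m}(f)|\lesssim\|\lambda(f)\,|\,\bns\|\,2^{\nu(n/p^--\alpha_1)}(1+|m|)^{\alpha}$. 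Choosing $L_0$ large (e.g.\ $L_0>n/p^-+\alpha-\alpha_1$) makes the double series absolutely, hence unconditionally, convergent in $\cS'$, with sum $\sum_\nu\theta_\nu*(\Psi_\nu*f)=f$. I expect the main obstacle to be the construction in the first step: producing a reproducing identity whose synthesis functions are simultaneously $C^\infty$, compactly supported and endowed with enough vanishing moments, while compatible with analysis functions $\psi_0,\psi_1$ meeting the Tauberian hypotheses of Theorem~\ref{thm:Peetre-charact} --- this is where the classical $\varphi$-transform machinery and its variable/$2$-microlocal refinements in \cite{AlmC15a,AlmC16b,CaeK18} must be invoked carefully. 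The convergence bookkeeping of the last step is fiddly but routine, and nothing genuinely new in the Morrey or variable-exponent direction enters there.
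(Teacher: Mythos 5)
Your proposal follows essentially the same route as the paper, which itself only sketches this proof by reference to the scheme of \cite[Theorem~5.5]{CaeK19}: a Calder\'on-type reproducing formula with compactly supported synthesis kernels, coefficients $\lambda_{\nu m}(f)$ given by local suprema of $\Psi_\nu\ast f$ over $Q_{\nu m}$, the pointwise domination $\sum_m \lambda_{\nu m}(f)\,w_\nu(2^{-\nu}m)\chi_{\nu m}\lesssim (\Psi^\ast_\nu f)_a\,w_\nu$ combined with the lattice property of $\ell_\qx\big(M_{\px,\ux}\big)$ and Theorem~\ref{thm:Peetre-charact}, and Lemma~\ref{lem:embedding-qinfty} together with crude bounds of the type $\|\chi_{\nu m}\,|\,M_{\px,\ux}\|\gtrsim 2^{-\nu n/p^-}$ for the $\cS'$-convergence; all of these checks are carried out correctly. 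The one place where your write-up, taken literally, would fail is the first step: you fix the analysis pair $\psi_0,\psi_1$ \emph{first} and then ask for a compactly supported synthesis pair $\theta_0,\theta_1$ realizing the reproducing identity. For an arbitrary pair satisfying only the Tauberian conditions no such compactly supported dual family exists: compact support forces the Fourier transforms of the $\theta_\nu$ to be entire, so the identity cannot be arranged by localizing them to the regions where $\psi_0,\psi_1$ are known to be nonzero. The standard repair --- which you correctly single out as the main obstacle --- is to reverse the order: first build $\theta_0,\theta_1\in C^\infty_c(\Rn)$ with supports small enough that the \emph{given} $d>1$ is achieved (not merely ``$d$ large enough depending on $\supp\theta$'', which proves less than the statement), with the required vanishing moments for $\theta_1$ and with Fourier transforms satisfying Tauberian conditions (possible since they are entire); then solve for a band-limited analysis family, which automatically meets the hypotheses of Theorem~\ref{thm:Peetre-charact}. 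It is precisely the independence of the quasinorm from the admissible pair, i.e.\ Theorems~\ref{thm:different-Peetre} and \ref{thm:same-Peetre}, that then closes the coefficient estimate. With that reversal your argument coincides with the paper's intended proof.
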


The proof follows exactly the scheme given in the proof of \cite[Theorem~5.5]{CaeK19}, using now Theorem~\ref{thm:Peetre-charact} above instead of \cite[Theorem~4.5]{CaeK18} and directly the lattice property of the Morrey-sequence spaces $\ell_\qx(M_{\px,\ux})$ instead of the corresponding property of the Morrey spaces $M_{\px,\ux}$. In the last part, to prove that the inner sum in \eqref{sum-convergence} converges in $\cS'$ for the regular distribution given by the pointwise sum, we can use the arguments given in second step of the proof of \cite[Theorem~5.6]{CaeK19}, since we have $n^{\textbf{\textit{w}}}_{\px,\ux,\qx} \hookrightarrow n^{\textbf{\textit{w}}}_{\px,\ux,\infty}$ (cf. Lemma~\ref{lem:embedding-qinfty}). Note that the hypothesis on $L$ there was not used and we can consider our atoms to be essentially molecules with the required  $M$ (actually, $M>\alpha+n$ will be enough for that effect).

\begin{corollary}\label{cor:embed-into-sprime}
Let $\w\in\W$, $p\in\PPlog$ and $q,u\in\PP$ with $p(x)\leq u(x)$ and $1/q$ locally $\log$-H\"older continuous. Then it holds
$$
\BN \hookrightarrow \cS'.
$$
\end{corollary}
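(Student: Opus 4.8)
The plan is to deduce the embedding from the atomic decomposition of Theorem~\ref{thm:atomic1}. Since by definition $\BN\subset\cS'$, it only remains to check that the inclusion is continuous, i.e.\ that for each $\phi\in\cS$ one has $|\langle f,\phi\rangle|\lesssim_\phi\|f\,|\,\BN\|$, with a constant depending on finitely many Schwartz seminorms of $\phi$. So I would fix $f\in\BN$ and apply Theorem~\ref{thm:atomic1} with $d=2$ and with $L\in\Nz$ chosen so large that $L>\tfrac{n}{p^-}+\alpha-\alpha_1$ (and $L\ge1$); this produces $(K,L,d)$-atoms $a_{jm}\in\cS$ and a sequence $\lambda(f)=(\lambda_{jm}(f))\in\bns$ with $f=\sum_{j\ge0}\sum_{m\in\Zn}\lambda_{jm}(f)\,a_{jm}$ in $\cS'$ and $\|\lambda(f)\,|\,\bns\|\le c\,\|f\,|\,\BN\|$.

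The first step is a pointwise lower bound controlling the coefficients. Evaluating the $\bns$-quasinorm on the sequence supported at a single index $(j,m)$, Proposition~\ref{pro:noqx} together with the lattice property of $\ell_\qx\big(M_{\px,\ux}\big)$ and the homogeneity of the Morrey quasinorm give
$$
|\lambda_{jm}(f)|\;w_j(2^{-j}m)\;\big\|\chi_{jm}\,|\,M_{\px,\ux}\big\|\;\le\;\|\lambda(f)\,|\,\bns\|\;\le\;c\,\|f\,|\,\BN\|.
$$
Properties (i)--(ii) of the class $\W$ yield $w_j(2^{-j}m)\gtrsim 2^{j\alpha_1}(1+|m|)^{-\alpha}$, while testing the supremum defining the Morrey quasinorm of $\chi_{jm}$ at the centre $x_0=2^{-j}m$ with radius $r_0=2^{-j-1}$ — for which $B(x_0,r_0)\subset Q_{jm}$, hence $\chi_{jm}\chi_{B(x_0,r_0)}=\chi_{B(x_0,r_0)}$, and $r_0^{\,n/u(x_0)-n/p(x_0)}\ge1$ since $p(x_0)\le u(x_0)$ and $r_0\le1$ — combined with the elementary estimate $\|\chi_{B(x_0,r_0)}\,|\,L_\px\|\ge|B(x_0,r_0)|^{1/p^-}$ (valid because $|B(x_0,r_0)|\le1$), gives $\big\|\chi_{jm}\,|\,M_{\px,\ux}\big\|\gtrsim 2^{-jn/p^-}$, uniformly in $m$. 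Therefore
$$
|\lambda_{jm}(f)|\;\le\;c\,2^{\,j(n/p^--\alpha_1)}\,(1+|m|)^{\alpha}\,\|f\,|\,\BN\|,\qquad j\in\Nz,\ m\in\Zn.
$$

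The second step is the classical estimate of the pairing of an atom against a test function. For $j\ge1$ I would subtract from $\phi$ its Taylor polynomial of degree $<L$ centred at $2^{-j}m$ and use the vanishing moments of $a_{jm}$, the bound $\|a_{jm}\|_\infty\le1$, $\supp a_{jm}\subset 2Q_{jm}$, and the rapid decay of the derivatives of $\phi$, to get, for every $N>0$,
$$
|\langle a_{jm},\phi\rangle|\;\le\;c_N\,\|\phi\|_{L,N}\,2^{-j(n+L)}\,(1+|2^{-j}m|)^{-N},
$$
where $\|\phi\|_{L,N}:=\sup_{x\in\Rn}(1+|x|)^{N}\max_{|\gamma|=L}|D^\gamma\phi(x)|$; for $j=0$, similarly, $|\langle a_{0m},\phi\rangle|\le c_N\,\|\phi\|_{0,N}\,(1+|m|)^{-N}$ with $\|\phi\|_{0,N}:=\sup_{x\in\Rn}(1+|x|)^{N}|\phi(x)|$. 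Fixing $N>n+\alpha$, using $(1+|m|)^{\alpha}\le 2^{j\alpha}(1+|2^{-j}m|)^{\alpha}$ and $\sum_{m\in\Zn}(1+|2^{-j}m|)^{\alpha-N}\lesssim 2^{jn}$, one combines the two steps to obtain
$$
\sum_{j\ge0}\sum_{m\in\Zn}|\lambda_{jm}(f)|\,|\langle a_{jm},\phi\rangle|\;\lesssim\;\big(\|\phi\|_{L,N}+\|\phi\|_{0,N}\big)\,\|f\,|\,\BN\|\,\sum_{j\ge0}2^{\,j(n/p^--\alpha_1+\alpha-L)},
$$
and the geometric series on the right converges by the choice of $L$. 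In particular $\sum_{j,m}\lambda_{jm}(f)\langle a_{jm},\phi\rangle$ converges absolutely; since $f=\sum_{j,m}\lambda_{jm}(f)a_{jm}$ in $\cS'$, this sum equals $\langle f,\phi\rangle$, and the displayed estimate gives the required bound $|\langle f,\phi\rangle|\lesssim_\phi\|f\,|\,\BN\|$. The only point that I expect to require genuine care is the uniform lower bound $\big\|\chi_{jm}\,|\,M_{\px,\ux}\big\|\gtrsim 2^{-jn/p^-}$ for arbitrary $p\in\PP$: here $p^-\ge1$ is not assumed, so Lemma~\ref{lem:balls} is not directly available and one argues by hand with the semimodular $\varrho_\px$, exploiting that the Morrey factor $r_0^{\,n/u-n/p}$ is $\ge1$ on balls of radius $r_0\le1$. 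The remaining ingredients — the single-index reduction via Proposition~\ref{pro:noqx} and the atom-versus-test-function estimate — are standard and go through as in the constant exponent theory.
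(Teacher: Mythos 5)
Your proposal is correct and follows essentially the same route as the paper, which deduces the embedding from the atomic decomposition of Theorem~\ref{thm:atomic1} together with a coefficient bound extracted from the sequence space (the paper passes through $n^{\textbf{\textit{w}}}_{\px,\ux,\infty}$ via Lemma~\ref{lem:embedding-qinfty}, whereas you isolate a single index $(j,m)$ via the lattice property and Proposition~\ref{pro:noqx} — an equivalent step) and the standard moment/decay estimate for the pairing of an atom with a test function. All the quantitative details you supply (the lower bounds $w_j(2^{-j}m)\gtrsim 2^{j\alpha_1}(1+|m|)^{-\alpha}$ and $\|\chi_{jm}\,|\,M_{\px,\ux}\|\gtrsim 2^{-jn/p^-}$, and the choice $L>n/p^-+\alpha-\alpha_1$) check out.
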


The proof of this corollary is essentially the same as the proof of \cite[Corollary~5.7]{CaeK19}. We have to replace $\FE$, $\fes$ and \cite[Theorem~5.5]{CaeK19} respectively by $\BN$, $\bns$ and Theorem~\ref{thm:atomic1} above, and use again that $n^{\textbf{\textit{w}}}_{\px,\ux,\qx} \hookrightarrow n^{\textbf{\textit{w}}}_{\px,\ux,\infty}$.

The next statement establishes the completeness of the variable Besov-Morrey spaces.

\begin{corollary}\label{cor:complete}
Let $\w\in\W$, $p\in\PPlog$ and $q,u\in\PP$ with $p(x)\leq u(x)$ and $1/q$ locally $\log$-H\"older continuous. Then the spaces $\BN$ are complete.
\end{corollary}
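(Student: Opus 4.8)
The plan is to obtain completeness from the continuous embedding $\BN\hookrightarrow\cS'$ proved in Corollary~\ref{cor:embed-into-sprime}, combined with a Fatou-type lower semicontinuity of the quasinorm $\|\cdot\,|\,\BN\|$ under pointwise convergence of the frequency pieces $(\varphi_\nu\hat f)^\vee$; this is the classical route for Besov- and Triebel--Lizorkin-type scales. Concretely, the heart of the matter is the following Fatou property of the mixed Morrey-sequence quasinorm:

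$(\star)$ if $(f_\nu^{(l)})_\nu\subset\mathcal{M}(\Rn)$, $l\in\N$, and $(f_\nu)_\nu\subset\mathcal{M}(\Rn)$ satisfy $f_\nu^{(l)}\to f_\nu$ a.e.\ as $l\to\infty$ for each $\nu\in\Nz$, then
\[
\big\|(f_\nu)_\nu\,|\,\ell_\qx(M_{\px,\ux})\big\|\le\liminf_{l\to\infty}\big\|(f_\nu^{(l)})_\nu\,|\,\ell_\qx(M_{\px,\ux})\big\|.
\]

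To prove $(\star)$ I would first work at the semimodular level. By Lemma~\ref{lem:inf-sup} (with $v=1/q$), the $\nu$-th summand in \eqref{def:lpqumod} equals $\inf\{\lambda>0:\varrho_{\px,\ux}(f_\nu/\lambda^{1/q(\cdot)})\le1\}$, where $\varrho_{\px,\ux}$ is the Morrey semimodular of Remark~\ref{rem:morrey-alternative}. Since the integrand in \eqref{def:Lpmod}, namely $\phi_{p(y)}$ from \eqref{def:Lpmod-aux}, is pointwise nondecreasing and lower semicontinuous, Fatou's lemma gives $\varrho_\px(g)\le\liminf_l\varrho_\px(g^{(l)})$ whenever $g^{(l)}\to g$ a.e.; taking the supremum over balls, $\varrho_{\px,\ux}$ inherits the same lower semicontinuity. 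A routine $\varepsilon$-subsequence argument then upgrades this to $\inf\{\lambda>0:\varrho_{\px,\ux}(f_\nu/\lambda^{1/q(\cdot)})\le1\}\le\liminf_l\inf\{\lambda>0:\varrho_{\px,\ux}(f_\nu^{(l)}/\lambda^{1/q(\cdot)})\le1\}$ for each $\nu$, and summing over $\nu$ (Fatou's lemma for series) yields $\varrho_{\ell_\qx(M_{\px,\ux})}((f_\nu)_\nu)\le\liminf_l\varrho_{\ell_\qx(M_{\px,\ux})}((f_\nu^{(l)})_\nu)$. Passing from the semimodular to the quasinorm \eqref{def:lpqunorm} is then a final $\varepsilon$-subsequence argument relying on the left-continuity of $\varrho_{\ell_\qx(M_{\px,\ux})}$ (Theorem~\ref{theo:modular}) and the ensuing unit ball property.

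With $(\star)$ available, let $(f_k)_k$ be a Cauchy sequence in $\BN$. By Corollary~\ref{cor:embed-into-sprime} it is Cauchy, hence convergent, in $\cS'$; call its limit $f$. For each $\nu$, since $\varphi_\nu\in\cS$ has compact support we have $\varphi_\nu\hat f_k\to\varphi_\nu\hat f$ in $\cS'$ with supports contained in a fixed compact set, so the associated band-limited $C^\infty$-functions converge pointwise on $\Rn$, i.e.\ $(\varphi_\nu\hat f_k)^\vee(x)\to(\varphi_\nu\hat f)^\vee(x)$ for every $x$ (a Paley--Wiener observation), and multiplication by the fixed weight $w_\nu$ preserves this. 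Given $\varepsilon>0$, pick $N$ with $\|f_k-f_l\,|\,\BN\|\le\varepsilon$ for all $k,l\ge N$. Fixing $k\ge N$, for each $\nu$ one has $w_\nu(\varphi_\nu(\hat f_k-\hat f))^\vee=\lim_{l\to\infty}w_\nu(\varphi_\nu(\hat f_k-\hat f_l))^\vee$ pointwise, so $(\star)$ gives
\[
\big\|f_k-f\,|\,\BN\big\|\le\liminf_{l\to\infty}\big\|f_k-f_l\,|\,\BN\big\|\le\varepsilon.
\]
In particular $f-f_k\in\BN$, hence $f=f_k+(f-f_k)\in\BN$ by the quasi-triangle inequality, and $f_k\to f$ in $\BN$. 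Thus $\BN$ is complete.

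The main obstacle is establishing $(\star)$: because of the nested infimum/supremum/sum in \eqref{def:lpqumod}, the lower semicontinuity is less transparent than in the variable Lebesgue setting, and the two decisive inputs are Lemma~\ref{lem:inf-sup} (to move the inner infimum over $\lambda$ past the supremum over balls) and the unit ball property coming from Theorem~\ref{theo:modular}. The remaining ingredients --- sequential completeness of $\cS'$, pointwise convergence of band-limited pieces under $\cS'$-convergence, and the final assembly via the quasi-triangle inequality --- are routine.
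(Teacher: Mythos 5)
Your proposal is correct and follows essentially the same route as the paper's proof: embed $\BN$ into $\cS'$ via Corollary~\ref{cor:embed-into-sprime}, take the $\cS'$-limit $f$ of the Cauchy sequence, note that the band-limited pieces $(\varphi_\nu\widehat{f_k-f_l})^\vee$ converge pointwise as $l\to\infty$, and conclude through a Fatou-type lower semicontinuity of the quasinorm assembled from Fatou's lemma for $\varrho_\px$, Lemma~\ref{lem:inf-sup} and the unit ball property. The only organisational difference is that the paper does not isolate your property $(\star)$ as a standalone lemma but proves it inline for the differences $f_k-f_l$, passing the $\liminf$ through the infimum over $\lambda$ by a contradiction argument with a built-in factor-$2$ slack (using $\tfrac12\le\big(2^{-q^-}\big)^{1/q(\cdot)}$) where you invoke an $\varepsilon$-perturbation of the infimum together with the continuity of $\lambda\mapsto\lambda^{1/q(y)}$; both variants go through.
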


Since the proof contains many complicated technical details intrinsic to the definition of the semimodular $\varrho_{\ell_\qx \left(M_{\px,\ux}\right)}$, we prefer to write it down in the Appendix and hence continue here to the atomic/molecular representation of our spaces.

The next theorem gives a result in the opposite direction to Theorem~\ref{thm:atomic1} above. We use the standard notation $\sigma_t:=n\big(\tfrac{1}{\min\{1,t\}}-1\big)$.

\begin{theorem}\label{thm:atomic2}
Let $\w\in\W$, $p\in\PPlog$ and $q,u\in\PP$ with $p(x)\leq u(x)$ and $1/q$ locally $\log$-H\"older continuous. Let $\lambda\in\bns$ and $[aa]_{jm}$ be $(K,L,M)$-molecules with $K>\alpha_2$,
\begin{equation}\label{cond:L}
L>-\alpha_1+\max\left\{ \frac{n}{\inf u},\; \sigma_{p^-} +c_{\log}(1/q) + n\, c_\infty(1/p,1/u) \right\}
\end{equation}
and
\begin{equation}\label{cond:M}
M>L+2n+2\alpha +\sigma_{p^-} +c_{\log}(1/q) + n\, c_\infty(1/p,1/u).
\end{equation}
Then
\begin{equation}\label{sum-iterated}
f:= \sum_{j=0}^\infty \sum_{m\in\Zn} \lambda_{jm} \, [aa]_{jm} \quad\quad \text{(convergence in $\cS'$)}
\end{equation}
belongs to $\BN$ and there exists $c>0$, independent of $\lambda$ and $[aa]_{jm}$ (as long as $K,L,M$ are kept fixed), such that
$$\big \|f\,|\,\BN \big\| \leq c\, \big\| \lambda\,|\,\bns\big\|.$$
\end{theorem}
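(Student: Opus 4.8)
The plan is to follow the standard template for molecular representation theorems (as in the Besov or Triebel--Lizorkin variable exponent literature, e.g. \cite{AlmC15a,CaeK19}), adapting every estimate to the mixed Morrey-sequence setting by invoking the convolution inequality of Theorem~\ref{theo:conv-eta-Morrey-seq} and the discrete convolution inequality of Lemma~\ref{lem:discrete-convolution} in place of the maximal inequalities available in the Lebesgue case. First I would fix an admissible system $\{\varphi_j\}$ and, for each $\nu\in\Nz$, estimate the building block
$$
\big| \big(\varphi_\nu \hat{f}\big)^{\vee}(x)\big| = \Big| \sum_{j=0}^\infty \sum_{m\in\Zn} \lambda_{jm} \big(\varphi_\nu \widehat{[aa]_{jm}}\big)^{\vee}(x)\Big|
$$
by splitting into the cases $j\le\nu$ and $j>\nu$. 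For $j\le\nu$ one uses the moment conditions (order $L$) on the molecules together with the support condition on $\varphi_\nu$ (Fourier side) to gain a factor $2^{-(\nu-j)(L+n)}$ roughly (more precisely $2^{-(\nu-j)(L+n+\sigma)}$-type decay after Taylor expansion); for $j>\nu$ one uses the $C^K$-smoothness and the decay $(1+2^j|x-2^{-j}m|)^{-M}$ of the molecules together with the smoothness of $\varphi_\nu$ to gain $2^{-(j-\nu)(K-\alpha_2)}$-type decay. In both regimes the outcome is a pointwise bound of the form
$$
\big| w_\nu(x)\big(\varphi_\nu \hat{f}\big)^{\vee}(x)\big| \lesssim \sum_{j=0}^\infty 2^{-|\nu-j|\delta}\, \big(\eta_{j,m_0} \ast g_j\big)(x), \qquad g_j(x):= \sum_{m\in\Zn} |\lambda_{jm}|\, w_j(2^{-j}m)\, \chi_{jm_0}(x),
$$
for suitable $\delta>0$ and $m_0$ large (this is where conditions \eqref{cond:L}--\eqref{cond:M} enter: they must be large enough that $\delta>0$ and that $m_0$ exceeds the threshold $n+c_{\log}(1/q)+n\,c_\infty(1/p,1/u)$ required in Theorem~\ref{theo:conv-eta-Morrey-seq} after passing to a $t$-th power with $t\in(0,p^-)$). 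Here I use Lemma~\ref{lem:weights} to move the weight $w_j$ inside the $\eta$-convolution, and the fact that a cube $Q_{jm}$ is comparable to a ball to replace $\chi_{jm}$ by the bigger characteristic function of a ball when convenient.

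Next, having the pointwise estimate above, I would apply the $t$-power trick \eqref{t-power}: raise everything to a power $t\in(0,p^-)$ chosen so that $p/t,q/t,u/t$ still satisfy the hypotheses of Theorem~\ref{theo:conv-eta-Morrey-seq} with $m=m_0 t$, then apply Lemma~\ref{lem:discrete-convolution} (with $\delta t$) to absorb the $2^{-|\nu-j|\delta}$ summation and Theorem~\ref{theo:conv-eta-Morrey-seq} to absorb the $\eta_{j,m_0}\ast(\cdot)$ convolution, arriving at
$$
\big\| \big(w_\nu(\varphi_\nu\hat f)^{\vee}\big)_\nu\,|\,\ell_\qx(M_{\px,\ux})\big\| \lesssim \big\| (g_j)_j\,|\,\ell_\qx(M_{\px,\ux})\big\|.
$$
The final purely-sequence-space step is to identify $\big\|(g_j)_j\,|\,\ell_\qx(M_{\px,\ux})\big\|$ with $\|\lambda\,|\,\bns\|$; by definition of $\bns$ and the remark following it, $g_j$ is (up to the harmless replacement $w_j(2^{-j}m)\leftrightarrow w_j(x)$ and the enlargement of cubes to balls) exactly the $j$-th term in the defining expression of $\|\lambda\,|\,\bns\|$, so this is immediate. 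This establishes the norm estimate $\|f\,|\,\BN\|\lesssim\|\lambda\,|\,\bns\|$ once we know $f$ is a well-defined tempered distribution.

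Finally I would address convergence of the double series \eqref{sum-iterated} in $\cS'$ and well-definedness of $f$. For this I would first sum over $m$ for fixed $j$ (the inner sum converges, locally uniformly, to a $C^K$ or at least continuous function of at most polynomial growth, using the molecule decay $(1+2^j|x-2^{-j}m|)^{-M}$ with $M>n$ and the fact that $\lambda\in\bns$ forces, via Lemma~\ref{lem:embedding-qinfty} and the Morrey/Lebesgue norm bounds on a fixed ball, a polynomial bound on $\sup_m|\lambda_{jm}| w_j(2^{-j}m)$ in $j$ — here $M>\alpha+n$ suffices, as in the atomic case), and then sum over $j$ in $\cS'$, testing against a Schwartz function and using the moment conditions on the molecules (order $L$) to get geometric decay in $j$ of $\langle\sum_m\lambda_{jm}[aa]_{jm},\phi\rangle$; this is the standard argument and mirrors the second step of the proof of \cite[Theorem~5.6]{CaeK19}. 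I expect the main obstacle to be the bookkeeping in the two-sided pointwise estimate for $|w_\nu(\varphi_\nu\widehat{[aa]_{jm}})^{\vee}|$ summed over $m$, specifically making sure the summation over $m\in\Zn$ converges with a bound of the right shape $\sum_m|\lambda_{jm}|w_j(2^{-j}m)\chi_{jm_0}$ rather than producing extra loss; this requires combining the spatial decay $M$ of the molecules with the weight comparability \eqref{aux6} carefully, and it is precisely here that the quantitative lower bounds \eqref{cond:L} and \eqref{cond:M} on $L$ and $M$ get used. Everything else is a transcription of the Lebesgue-case proof with Theorem~\ref{theo:conv-eta-Morrey-seq} and Lemma~\ref{lem:discrete-convolution} doing the work of the (unavailable) maximal inequalities.
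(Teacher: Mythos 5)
Your proposal is correct and follows essentially the same route as the paper: a two-regime pointwise estimate on $w_\nu(\varphi_\nu\hat f)^{\vee}$ (moment conditions for $j\le\nu$, smoothness and decay for $j>\nu$) in $t$-th power form, then Lemma~\ref{lem:discrete-convolution} and Theorem~\ref{theo:conv-eta-Morrey-seq} after the reduction \eqref{t-power}, with the $\cS'$-convergence handled as in \cite[Theorem~5.6]{CaeK19}; the paper simply imports the pointwise estimate wholesale from \cite[Theorem~5.21]{CaeK19} rather than rederiving it. The only caveat is that the pointwise bound must be derived directly in the $t$-th power form (so that the convolution appears as $\eta_{j,(N-\alpha)t}\ast(\cdot)^t$), rather than obtained first without powers and then raised to the power $t$ afterwards, but this is the standard manoeuvre and your sketch is otherwise consistent with it.
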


\begin{proof}
We follow the scheme of the proof of \cite[Theorem~5.21]{CaeK19}. Note, however, that here the exponent $u$ needs not be bounded (cf. \cite[Remark~5.16]{CaeK19}).

\underline{Step~1}: First we observe that the iterated sum in \eqref{sum-iterated} converges in $\cS'$. This follows from \cite[Theorem~5.6]{CaeK19}, observing that $\lambda\in \bns\subset n^{\textbf{\textit{w}}}_{\px,\ux,\infty}$. Following the first step of the proof of \cite[Theorem~5.21]{CaeK19} for the set of special hypotheses (70), (71) of that theorem on $L$ and $M$, we have to replace $\FE$ and $\fes$ by $\BN$ and $\bns$, respectively. In our case, the hypotheses on $L$ and $M$ above guarantee that
\begin{equation}\label{cond:L1}
L>-\alpha_1+ \sigma_{p^-} +c_{\log}(1/q) + n\, c_\infty(1/p,1/u)
\end{equation}
and
\begin{equation}\label{cond:M1}
M>L+2n+\alpha +\sigma_{p^-} +c_{\log}(1/q) + n\, c_\infty(1/p,1/u),
\end{equation}
so that it is possible to choose $t\in \big(0,\min\{1,p^-\}\big)$ such that
$$
\alpha+n/t+c_{\log}(1/q) + n\, c_\infty(1/p,1/u) < \min\{M-L-n, L+n+\alpha_1+\alpha\},
$$
and $N$ strictly in between these two quantities. In what follows we assume that $t$ and $N$ have been chosen in such a way.

\underline{Step~2}: It works much as the second and third steps of the proof of \cite[Theorem~5.21]{CaeK19}. Note now that the choices made in Step 1 guarantee that
$$
(N-\alpha)t>n+t\,c_{\log}(1/q) + n\,t\, c_\infty(1/p,1/u).
$$
Then using the lattice property of the mixed Morrey-sequence space $\ell_\qx\big(M_{\px,\ux}\big)$, Lemma~\ref{lem:discrete-convolution} above with $\delta:=\min\{L+n+\alpha_1-(N-\alpha), K-\alpha_2\}$, \eqref{t-power} and Theorem~\ref{theo:conv-eta-Morrey-seq} with $m:=(N-\alpha)t$ (and $p/t$, $q/t$, $u/t$ instead of $p$, $q$, $u$, respectively), we get that
$$\big \|f\,|\,\BN \big\| \lesssim \Big\| \Big(\sum_{m\in\Zn} |\lambda_{jm}|\, w_j(2^{-j}m)\, \chi_{jm}\Big)_j\,\big| \, \ell_\qx\big(M_{\px,\ux}\big)  \Big\| = \big\| \lambda\,|\,\bns\big\|,$$
finishing the proof.
\end{proof}

\begin{remark}\label{rem:alternative-cond}
As in \cite[Theorem~5.21]{CaeK19}, in the situation $\sup\limits_{x\in\Rn} \big(1-\frac{p(x)}{u(x)}\big) < p^-$ (which necessarily holds when $p^-\geq 1$), the conclusion given in Theorem~\ref{thm:atomic2} above also holds under the alternative conditions
\begin{equation*}
L>-\alpha_1+\sigma_{p^-} +c_{\log}(1/q) + n\, c_\infty\big(1/p,1/u,\min\{1,p^-\}\big)
\end{equation*}
and
\begin{equation*}
M>L+2n+2\alpha +\max\{1,2c_{\log}(1/p)\}\,\sigma_{p^-} +c_{\log}(1/q) + n\, c_\infty\big(1/p,1/u,\min\{1,p^-\}\big),
\end{equation*}
with the abbreviation $c_\infty\big(1/p,1/u,r\big):=\max\Big\{0, \sup\limits_{x\in\Rn} \big(1/p(x)-1/u(x)\big)-r/p_\infty\Big\}$. In fact, following the arguments above with \cite[Theorem~5.14 and Remark~5.20]{CaeK19} instead of \cite[Theorem~5.6 and Remark~5.16]{CaeK19}, this set of assumptions also guarantees that \eqref{cond:L1} and \eqref{cond:M1} hold, hence concluding similarly.
\end{remark}

\begin{remark}
Since $(K,L,d)$-atoms are, up to a multiplicative constant, $(K,L,M)$-molecules (cf. Remark~\ref{atom-is-molecule}), Theorems~\ref{thm:atomic1} and \ref{thm:atomic2} (see also Remark~\ref{rem:alternative-cond}) above allow us to obtain both atomic and molecular characterizations for the spaces $\BN$. This generalizes, in particular, the results obtained by the authors in \cite{AlmC16b} for the non-Morrey case (cf. \cite[Corollaries~4.14 and 4.15]{AlmC16b}).
\end{remark}

The same type of arguments as in the proof of \cite[Corollary~5.22]{CaeK19} can now be used to show that the Schwartz space is embedded into the spaces $\BN$. Using Theorem~\ref{thm:atomic2} above and taking advantage of Proposition~\ref{pro:noqx}, we get then

\begin{corollary}
Let $\w\in\W$, $p\in\PPlog$ and $q,u\in\PP$ with $p(x)\leq u(x)$ and $1/q$ locally $\log$-H\"older continuous. Then it holds
$$\cS \hookrightarrow \BN.$$
\end{corollary}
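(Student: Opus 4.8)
The plan is to realise every Schwartz function, after normalisation by a suitable Schwartz seminorm, as a single $(K,L,M)$-molecule concentrated near the unit cube $Q_{00}$, and then to apply the molecular synthesis result Theorem~\ref{thm:atomic2}, evaluating the resulting $\bns$-quasinorm by means of Proposition~\ref{pro:noqx} (which simplifies sequences with a single nonzero entry).

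First I would fix $K,L,M\in\Nz$ with $K>\alpha_2$ and satisfying \eqref{cond:L} and \eqref{cond:M}; this is possible because under the standing hypotheses all the quantities appearing there are finite and those conditions only bound $K,L,M$ from below. Given $f\in\cS$, I would set
$$
c_f:=\max_{0\le|\gamma|\le K}\ \sup_{x\in\Rn}\,(1+|x|)^{M}\,|D^\gamma f(x)|,
$$
which is finite and defines a continuous seminorm on $\cS$; if $c_f=0$ then $f=0$ and the claim is trivial, so assume $c_f>0$. Then $[aa]_{00}:=c_f^{-1}f$ is a $C^\infty$ function with $|D^\gamma [aa]_{00}(x)|\le(1+|x|)^{-M}$ for $0\le|\gamma|\le K$, hence a $(K,L,M)$-molecule concentrated near $Q_{00}$; note that no moment condition enters here, since in the definition of molecule these are imposed only at levels $j\ge1$. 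Taking $[aa]_{jm}:=0$ for $(j,m)\neq(0,0)$ (the zero function being trivially a $(K,L,M)$-molecule), $\lambda_{00}:=c_f$ and $\lambda_{jm}:=0$ otherwise, Theorem~\ref{thm:atomic2} yields $f=\sum_{j=0}^\infty\sum_{m\in\Zn}\lambda_{jm}[aa]_{jm}=\lambda_{00}[aa]_{00}\in\BN$ together with $\|f\,|\,\BN\|\le c\,\|\lambda\,|\,\bns\|$, with $c$ independent of $f$.

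Next I would compute $\|\lambda\,|\,\bns\|$. By the definition of this quasinorm it equals the $\ell_\qx\big(M_{\px,\ux}\big)$-quasinorm of the sequence which vanishes at every level $j\ge1$ and equals $c_f\,w_0(0)\,\chi_{00}$ at level $j=0$; therefore, by Proposition~\ref{pro:noqx}, it equals $c_f\,w_0(0)\,\big\|\chi_{00}\,|\,M_{\px,\ux}\big\|$. Here $w_0(0)\in(0,\infty)$, and $\big\|\chi_{00}\,|\,M_{\px,\ux}\big\|$ is a finite constant depending only on $p$ and $u$: in the supremum defining \eqref{def:morrey-norm}, the part $r\ge1$ is bounded because $\tfrac{n}{u(x)}-\tfrac{n}{p(x)}\le0$ and $\|\chi_{00}\chi_{B(x,r)}\,|\,L_\px\|\le\|\chi_{00}\,|\,L_\px\|<\infty$, while for $r\le1$ one uses $\|\chi_{B(x,r)}\,|\,L_\px\|\approx r^{n/p(x)}$, so that $r^{\frac{n}{u(x)}-\frac{n}{p(x)}}\|\chi_{B(x,r)}\,|\,L_\px\|\approx r^{n/u(x)}\le1$. (The estimate $\|\chi_{B(x,r)}\,|\,L_\px\|\approx r^{n/p(x)}$ for $r\le1$ is Lemma~\ref{lem:balls} when $p^-\ge1$; in general it follows from that lemma applied to $p/s$ with $s\in(0,p^-]$, combined with \eqref{Lp-t-power}.) Hence $\|f\,|\,\BN\|\lesssim c_f$, and since $c_f$ is a continuous seminorm on $\cS$ this is precisely the continuous embedding $\cS\hookrightarrow\BN$.

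The argument is short, and I do not expect a real obstacle. The two points needing a little care are: checking that the normalised Schwartz function truly qualifies as a molecule concentrated near $Q_{00}$ — in particular that vanishing moments are irrelevant at level $0$, so that $L$ may be chosen as large as \eqref{cond:L} requires — and establishing the finiteness of $\big\|\chi_{00}\,|\,M_{\px,\ux}\big\|$ for all admissible $p,u$ with $p\in\PPlog$ (including $p^-<1$), where the rescaled form of Lemma~\ref{lem:balls} mentioned above is used.
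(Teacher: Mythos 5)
Your proposal is correct and follows essentially the same route as the paper, which likewise obtains the embedding by combining the molecular synthesis result (Theorem~\ref{thm:atomic2}) with Proposition~\ref{pro:noqx} for sequences having a single nonzero entry, writing each Schwartz function as a constant multiple of one $(K,L,M)$-molecule at level $j=0$ (where no moment conditions are imposed). The supporting details you supply --- the finiteness of $\big\|\chi_{00}\,|\,M_{\px,\ux}\big\|$ via Lemma~\ref{lem:balls} (rescaled through \eqref{Lp-t-power} when $p^-<1$) and the continuity of $c_f$ as a Schwartz seminorm --- are sound.
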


\section*{Appendix}

\subsection{Proof of Lemma~\ref{lem:discrete-convolution}}
We can assume that $p,q \ge 1$, otherwise one can always take $t\in \left(0, \min\{1,p^-,q^-\}\right)$ and use \eqref{t-power} as follows:
\begin{eqnarray*}
\big\| (G_\nu)_\nu\,| \ell_\qx\big(M_{\px,\ux}\big) \big\|^t  &= & \left\| (G_\nu^t)_\nu\,| \ell_{\frac{\qx}{t}}\big(M_{\frac{\px}{t},\frac{\ux}{t}}\big) \right\|
\leq  \left\|  \sum_{j=0}^\infty 2^{-|j-\nu|\delta\,t}\,  g_j^t  \,| \ell_{\frac{\qx}{t}}\big(M_{\frac{\px}{t},\frac{\ux}{t}}\big) \right\| \\
& \lesssim & \left\| (g_j^t)_j  \,| \ell_{\frac{\qx}{t}}\big(M_{\frac{\px}{t},\frac{\ux}{t}}\big) \right\| = \big\| (g_j)_j \,| \ell_\qx\big(M_{\px,\ux}\big) \big\|.
\end{eqnarray*}
So, let us prove Lemma~\ref{lem:discrete-convolution} for any $p,q\ge 1$ (with $u\geq p$). Suppose that
$$\mu:=\big\| (g_j)_j \,| \ell_\qx\big(M_{\px,\ux}\big) \big\| \in (0,\infty)$$
(otherwise there is nothing to show). By the unit ball property of $\varrho_{\ell_\qx \left(M_{\px,\ux}\right)}$, inequality \eqref{ineq:discrete-convolution} will follow from the inequality
\begin{equation*}
\varrho_{\ell_\qx \left(M_{\px,\ux}\right)}\left(\frac{(G_\nu)_\nu}{c\,\mu}\right) \leq 1,
\end{equation*}
for some constant $c>0$ independent of $\mu$, which will be derived next. With the convention $g_j \equiv 0$ for $j<0$ and the abbreviation
$h_{x,r}(\cdot):=r^{\frac{n}{u(x)}-\frac{n}{p(x)}} \chi_{B(x,r)}(\cdot), \; x\in\Rn, r>0$, by the unit ball property of $\varrho_{\px}$ and Minkowski's inequality we get
\begin{eqnarray}
\varrho_{\ell_\qx \left(M_{\px,\ux}\right)}\left(\frac{(G_\nu)_\nu}{c\,\mu}\right) & = & \sum_{\nu \geq 0} \sup_{x\in\Rn,r>0} \inf\left\{\lambda>0: \, \left\|\frac{h_{x,r}(\cdot)\,G_\nu}{c\,\mu\,\lambda^{1/\qx}} \,| L_\px\right\|\le 1 \right\} \nonumber \\
& \leq & \sum_{\nu \geq 0} \sup_{x\in\Rn,r>0} \inf\left\{\lambda>0: \, \sum_{l\in\Z} 2^{-|l|\delta} \left\|\frac{h_{x,r}(\cdot)\,g_{\nu+l}}{c\,\mu\,\lambda^{1/\qx}} \,| L_\px\right\|\le 1 \right\}. \label{inf}
\end{eqnarray}
Let us define
\[
I_{\nu,l}(x,r):= \inf\left\{\lambda>0: \, c^{-1} c(\delta)2^{-|l|\delta/2} \left\|\frac{h_{x,r}(\cdot)\,g_{\nu+l}}{\mu\,\lambda^{1/\qx}} \,| L_\px\right\|\le 1 \right\}
\]
for $\nu\in\Nz$, $l\in\Z$, $x\in\Rn$, $r>0$, where $c(\delta):= \sum_{l\in\Z} 2^{-|l|\delta/2}$. We claim that, for each $\nu\in\Nz$ (and each $x\in\Rn$, $r>0$), the sum $\sum_{l\in\Z} I_{\nu,l}(x,r)$ is not smaller than the infimum in \eqref{inf}. We may assume that this sum is finite. For any $\varepsilon>0$ we have
$$
c^{-1} c(\delta) 2^{-|l|\delta/2} \left\|\frac{h_{x,r}(\cdot)\,g_{\nu+l}}{\mu\,[I_{\nu,l}(x,r) + \varepsilon 2^{-|l|}]^{1/\qx}} \,| L_\px\right\|\le 1,
$$
so that
$$
c^{-1} c(\delta) \sum_{l\in\Z} 2^{-|l|\delta} \left\|\frac{h_{x,r}(\cdot)\,g_{\nu+l}}{\mu\,[I_{\nu,l}(x,r) + \varepsilon 2^{-|l|}]^{1/\qx}} \,| L_\px\right\|\le \sum_{l\in\Z} 2^{-|l|\delta/2}.
$$
Therefore
$$
c^{-1} \sum_{l\in\Z} 2^{-|l|\delta} \left\|\frac{h_{x,r}(\cdot)\,g_{\nu+l}}{\mu\,\Big(\sum_{k\in\Z} [I_{\nu,k}(x,r) + \varepsilon 2^{-|k|}]\Big)^{1/\qx}} \,| L_\px\right\|\le 1,
$$
and hence
$$
\inf\left\{\lambda>0: \, c^{-1} \sum_{l\in\Z} 2^{-|l|\delta} \left\|\frac{h_{x,r}(\cdot)\,g_{\nu+l}}{\mu\,\lambda^{1/\qx}} \,| L_\px\right\|\le 1 \right\} \leq \sum_{k\in\Z} I_{\nu,k}(x,r) + \varepsilon \sum_{k\in\Z} 2^{-|k|}.
$$
The claim follows by the convergence of the second series on the right-hand side and the arbitrariness of $\varepsilon>0$. Now using it in \eqref{inf} and making a convenient change of variables (choosing the constant $c\ge c(\delta)$ and noting that $q\ge 1$), we have
\begin{eqnarray*}
\varrho_{\ell_\qx \left(M_{\px,\ux}\right)}\left(\frac{(G_\nu)_\nu}{c\,\mu}\right) & \leq & \sum_{\nu \geq 0} \sup_{x\in\Rn,r>0} \sum_{k\in\Z} \inf\left\{\lambda>0: \, c^{-1} c(\delta) 2^{-|k|\delta/2} \left\|\frac{h_{x,r}(\cdot)\,g_{\nu+k}}{\mu\,\lambda^{1/\qx}} \,| L_\px\right\|\le 1 \right\}\\
& \leq & \sum_{\nu \geq 0} \sum_{k\in\Z} \sup_{x\in\Rn,r>0}  c^{-1} c(\delta) 2^{-|k|\delta/2} \inf\left\{\sigma>0: \, \left\|\frac{h_{x,r}(\cdot)\,g_{\nu+k}}{\mu\,\sigma^{1/\qx}} \,| L_\px\right\|\le 1 \right\}\\
& = &  \sum_{k\in\Z} c^{-1} c(\delta) 2^{-|k|\delta/2} \sum_{\nu \geq 0} \sup_{x\in\Rn,r>0} \inf\left\{\sigma>0: \, \left\|\frac{h_{x,r}(\cdot)\,g_{\nu+k}}{\mu\,\sigma^{1/\qx}} \,| L_\px\right\|\le 1 \right\}\\
& = &  \sum_{k\in\Z} c^{-1} c(\delta) 2^{-|k|\delta/2} \,\varrho_{\ell_\qx \left(M_{\px,\ux}\right)}\left(\frac{(g_j)_j}{\mu}\right) \leq 1,
\end{eqnarray*}
with the choice $c=c(\delta)^2$ and taking into account our definition of $\mu$. The proof is complete.

\subsection{Proof of Lemma~\ref{lem:embedding-qinfty}}
We use the following auxiliary result which can be of independent interest:
\begin{lemma}\label{lem:aux-embedding-qinfty}
Let $\varrho$ be an increasing and left-continuous semimodular in $\mathcal{M}_0(\Rn)$. Let $q\in\PP$ and $f\in\mathcal{M}_0(\Rn)$. If $\,\inf\left\{ \lambda>0: \varrho \big(\frac{f}{\lambda^{1/\qx}}\big)\leq 1 \right\} \leq 1$, then $\varrho(f)\leq 1$.
\end{lemma}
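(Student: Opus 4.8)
The plan is to split according to whether the essential infimum $q^-$ of $q$ is finite and, in the main case, to replace the function-valued rescaling $f/\lambda^{1/q(\cdot)}$ by an honest scalar rescaling so that the left-continuity hypothesis becomes applicable.

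First I would record the elementary monotonicity fact that $\lambda \mapsto \varrho\big(f/\lambda^{1/q(\cdot)}\big)$ is non-increasing on $(0,\infty)$: if $0<\lambda_1\le\lambda_2$ then $\lambda_1^{1/q(x)}\le\lambda_2^{1/q(x)}$ for a.e.\ $x$ (with the usual convention $\lambda^{1/\infty}=1$), hence $\big|f/\lambda_2^{1/q(\cdot)}\big|\le\big|f/\lambda_1^{1/q(\cdot)}\big|$ pointwise a.e., and the claim follows since $\varrho$ is increasing. Because the hypothesis gives $\inf\big\{\lambda>0:\varrho(f/\lambda^{1/q(\cdot)})\le 1\big\}\le 1$, this set is nonempty, and being an up-set by the monotonicity just noted it contains the interval $(1,\infty)$; thus $\varrho\big(f/\lambda^{1/q(\cdot)}\big)\le 1$ for every $\lambda>1$.

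If $q^-=\infty$ then $q\equiv\infty$, the convention $\lambda^{1/\infty}=1$ makes $\varrho(f/\lambda^{1/q(\cdot)})=\varrho(f)$, and the conclusion is immediate from the previous paragraph. In the main case $q^-<\infty$ I would fix $\lambda>1$ and use that $0<\tfrac{1}{q(x)}\le\tfrac{1}{q^-}$ together with $\lambda>1$ to get $\lambda^{1/q(x)}\le\lambda^{1/q^-}$ for a.e.\ $x$ (again with $\lambda^{1/\infty}=1\le\lambda^{1/q^-}$), whence $\big|\lambda^{-1/q^-}f\big|\le\big|f/\lambda^{1/q(\cdot)}\big|$ pointwise a.e.; monotonicity of $\varrho$ then yields $\varrho\big(\lambda^{-1/q^-}f\big)\le 1$ for every $\lambda>1$. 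Writing $\mu:=\lambda^{-1/q^-}$, as $\lambda$ ranges over $(1,\infty)$ the scalar $\mu$ ranges over $(0,1)$, so $\varrho(\mu f)\le 1$ for all $\mu\in(0,1)$; letting $\mu\to 1^-$ and invoking left-continuity (property (v) in Definition~\ref{def:modular}) gives $\varrho(f)=\lim_{\mu\to1^-}\varrho(\mu f)\le 1$, as required.

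The only genuinely delicate point is the passage from the variable power $\lambda^{1/q(\cdot)}$ to the constant power $\lambda^{1/q^-}$: this is precisely what forces the case distinction on $q^-$ (the estimate being vacuous, and unnecessary, when $q\equiv\infty$), and it is what allows left-continuity — stated only for scalar multiples — to be brought to bear. Everything else is monotonicity bookkeeping together with the convention for the value $q(x)=\infty$.
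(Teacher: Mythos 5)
Your proof is correct and follows essentially the same route as the paper's: from the hypothesis deduce $\varrho\big(f/\lambda^{1/q(\cdot)}\big)\le 1$ for all $\lambda>1$, dominate $\lambda^{-1/q^-}f$ by $f/\lambda^{1/q(\cdot)}$ to pass to a scalar multiple, and invoke left-continuity as $\lambda\to 1^+$. The only (harmless) difference is that you treat the degenerate case $q^-=\infty$ explicitly, which the paper leaves implicit.
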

\begin{proof}
If $\inf\left\{ \lambda>0: \varrho \big(\frac{f}{\lambda^{1/\qx}}\big)\leq 1 \right\} \leq 1$, then $\varrho \big(\frac{f}{\lambda^{1/\qx}}\big)\leq 1$ for every $\lambda>1$. Hence
$$\varrho \big(\frac{f}{\lambda^{1/q^-}}\big)\leq \varrho \big(\frac{f}{\lambda^{1/\qx}}\big)\leq 1$$
for such values of $\lambda$. Since $\frac{1}{\lambda^{1/q^-}} \to 1^-$ as $\lambda\to 1^+$, by the left-continuity of $\varrho$ we get
$$\varrho(f)=\lim_{\lambda\to 1^+} \varrho \Big(\frac{f}{\lambda^{1/q^-}}\Big) \leq 1.$$
\end{proof}
The embedding in Lemma~\ref{lem:embedding-qinfty} follows if we show that
$$ \big\| (f_j)_j \,| \ell_\infty\big(M_{\px,\ux}\big) \big\| \leq \big\| (f_j)_j \,| \ell_\qx\big(M_{\px,\ux}\big) \big\|$$
for every $(f_j)_j \subset \cS'$. The inequality is clear when the quasinorm on the right-hand side equals $0$ or $\infty$. If this is not the case, by homogeneity it is enough to prove that $\big\| (f_j)_j \,| \ell_\infty\big(M_{\px,\ux}\big) \big\| \leq 1$ when such quasinorm is less than or equal to one.

As above, let us use the abbreviation $h_{x,r}(\cdot):=r^{\frac{n}{u(x)}-\frac{n}{p(x)}} \chi_{B(x,r)}(\cdot), \; x\in\Rn, r>0$. By the unit ball property we have $\varrho_{\ell_\qx \left(M_{\px,\ux}\right)} \big((f_j)_j\big) \leq 1$. Thus
$$ \inf\left\{\lambda>0: \, \varrho_\px\Big(\frac{h_{x,r}(\cdot)\,f_j}{\lambda^{1/\qx}} \Big)\le 1 \right\} \leq 1$$
for each $x\in\Rn$, $r>0$ and $j\in\Nz$. By Lemma~\ref{lem:aux-embedding-qinfty} (applied to the semimodular $\varrho_\px$) we also have $\varrho_\px\big(h_{x,r}(\cdot)\,f_j\big)\le 1$. Therefore
$$
\varrho_{\ell_\infty\big(M_{\px,\ux}\big)} \big((f_j)_j\big) = \sum_{j\geq 0}\sup_{x\in\Rn,r>0} \inf\left\{\lambda>0: \, \varrho_\px\big(h_{x,r}(\cdot)\,f_j\big)\le 1 \right\} =0 \leq 1.
$$

\subsection{Proof of Corollary~\ref{cor:complete}}
Let $(f_k)_k$ be a Cauchy sequence in $\BN$. By Corollary~\ref{cor:embed-into-sprime} and the completeness of $\cS'$, there exists $f\in\cS'$ such that $\lim_{k\to \infty} f_k = f$ in $\cS'$. We have to show that the convergence occurs also in $\BN$.

Given $\varepsilon>0$, let $k_0\in\N$ be such that, for $k,l\geq k_0$, $\big\|f_k-f_l\,|\, \BN\big\| < \varepsilon/2$. Together with the unit ball property and Lemma~\ref{lem:inf-sup}, this implies that
\begin{equation*}
\sum_{j\ge 0} A_{j,k,l} := \sum_{j\ge 0} \inf\Big\{\lambda>0: \sup_{x\in\Rn,r>0} \varrho_{\px}\Big(\frac{h_{x,r} \, w_j (\varphi_j\,\widehat{f_k-f_l})^\vee}{\lambda^{\frac{1}{\qx}}\varepsilon/2}\Big)\le 1 \Big\} \leq 1,
\end{equation*}
with $h_{x,r}$ as in the previous proof. For any $k\geq k_0$, we show that, given any $j\in\Nz$, there exists $l_j$ such that
\begin{equation}\label{ineq:complete2}
B_{j,k} := \inf\Big\{\lambda>0: \sup_{x\in\Rn,r>0} \varrho_{\px}\Big(\frac{h_{x,r} \, w_j (\varphi_j\,\widehat{f_k-f})^\vee}{\lambda^{\frac{1}{\qx}}\varepsilon}\Big)\le 1 \Big\} \leq A_{j,k,l}
\end{equation}
for $l\geq l_j$. Assume, on the contrary, that for some $j_0\in\Nz$ and any $l$ there would always  exist $L\geq l$ such that $B_{j_0,k}> A_{j_0,k,L}$. We then could construct the following:
\begin{itemize}
\item for $l=k_0$ fix such a $L$ and call it $l_1$. So, $B_{j_0,k}> A_{j_0,k,l_1}$ and $l_1\geq k_0$;
\item for $l=l_1+1$ fix such a $L$ and call it $l_2$. So, $B_{j_0,k}> A_{j_0,k,l_2}$ and $l_2>l_1$;
\item for $l=l_2+1$ fix such a $L$ and call it $l_3$. So, $B_{j_0,k}> A_{j_0,k,l_3}$ and $l_3>l_2$;
\item $\cdots$
\end{itemize}
In this way we construct sequences $(l_i)_{i\in\N}$ and $\big(A_{j_0,k,l_i}\big)_{i\in\N}$ with $l_i\to \infty$ as $i\to\infty$ and $B_{j_0,k}> A_{j_0,k,l_i}$ for all $i\in\N$. Suppose that $B_{j_0,k} <\infty$. For each $x\in\Rn$ and $r>0$, we have
$$
\frac{h_{x,r} \, w_{j_0} (\varphi_{j_0}\,\widehat{f_k-f_{l_i}})^\vee}{B_{j_0,k}^{\frac{1}{\qx}}\,\varepsilon/2} \longrightarrow \frac{h_{x,r} \, w_{j_0} (\varphi_{j_0}\,\widehat{f_k-f})^\vee}{B_{j_0,k}^{\frac{1}{\qx}}\,\varepsilon/2} \ \ \ \ \text{as} \ \ \ i\to\infty
$$
pointwisely. Since the $L_\px$ semimodular of the left-hand side is at most one and $\varrho_\px$ satisfies Fatou's lemma (cf. \cite[Lemma~2.3.16(a)]{DHHR11}), the corresponding semimodular of the right-hand side is also less than or equal to one. Hence
\begin{equation*}
\sup_{x\in\Rn,r>0} \varrho_\px\left(\frac{h_{x,r} \, w_{j_0} (\varphi_{j_0}\,\widehat{f_k-f})^\vee}{B_{j_0,k}^{\frac{1}{\qx}}\,\varepsilon/2}\right) \leq 1.
\end{equation*}
If $q^-<\infty$, using the fact $\frac{1}{2} \leq \big(\frac{1}{2^{q^-}}\big)^{1/\qx}$ and monotonicity properties, we get
\begin{equation*}
\sup_{x\in\Rn,r>0} \varrho_\px\left(\frac{h_{x,r} \, w_{j_0} (\varphi_{j_0}\,\widehat{f_k-f})^\vee}{\big(\frac{B_{j_0,k}}{2^{q^-}}\big)^{\frac{1}{\qx}}\,\varepsilon}\right) \leq 1.
\end{equation*}
But this contradicts the definition of $B_{j_0,k}$. Note that if $q^-=\infty$, then either $B_{j_0,k}=\infty$ (ruled out here) or $B_{j_0,k}=0$, in which case \eqref{ineq:complete2} is clear.\\
Using similar arguments, it is not hard to check that the case $B_{j_0,k} =\infty$ leads also to a contradiction. Hence inequality \eqref{ineq:complete2} is proved.

Consider now $J\in\N$. From \eqref{ineq:complete2}, for $l\geq \max\{l_0, \ldots l_J\}$ we have
$$ \sum_{j=0}^J B_{j,k} \leq \sum_{j=0}^J A_{j,k,l}.$$
Choosing, in addition, $l\geq k_0$, we also have
$$ \sum_{j=0}^J B_{j,k} \leq \sum_{j=0}^\infty A_{j,k,l} \leq 1,$$
and hence also $\sum_{j=0}^\infty B_{j,k} \leq 1$. This shows that
$$\big\|f_k-f\,|\, \BN\big\| \leq \varepsilon \quad \quad \text{for} \quad k\geq k_0.$$
Finally, we conclude that
$$ f= (f-f_k)+f_k \in \BN \ \ \ \text{and} \ \ \ \lim_{k\to \infty} f_k = f \ \ \ \text{in} \ \ \BN.$$



\begin{thebibliography}{99}

\bibitem{AceM01}
E.\ Acerbi and G.\ Mingione: Regularity results for a class of
functionals with nonstandard growth, \textit{Arch.\ Ration.\ Mech.\
Anal.}~\textbf{156} (2001), no.~2, 121--140.

\bibitem{AlmHS08}
A.\ Almeida, J.\ Hasanov and S.\ Samko:
Maximal and potential operators in variable exponent Morrey spaces,
\textit{Georgian\ Math.\ J.}~\textbf{15} (2008), no.~2, 195--208.

%



%
\bibitem{AlmH10}
A.\ Almeida and P.\ H\"ast\"o:
Besov spaces with variable smoothness and integrability,
\textit{J.\ Funct.\ Anal.}~\textbf{258} (2010), no.~5, 1628--1655.
%
\bibitem{AlmC15a}
A.\ Almeida and A.\ Caetano:
On $2$-microlocal spaces with all exponents variable,
\textit{Nonlinear Analysis}~\textbf{135} (2016), 97--119.

\bibitem{AlmC16b}
A.\ Almeida and A.\ Caetano:
Atomic and molecular decompositions in variable exponent $2$-microlocal spaces and applications,
\textit{J. \ Funct.\ Anal.}~\textbf{270} (2016), 1888--1921.

%

%




%

%



\bibitem{CaeK18}
A.\ Caetano and H.\ Kempka:
Variable exponent Triebel-Lizorkin-Morrey spaces, 2018 (submitted).

\bibitem{CaeK19}
A.\ Caetano and H.\ Kempka:
Decompositions with atoms and molecules for variable exponent Triebel-Lizorkin-Morrey spaces, 2019 (submitted).




\bibitem{Cam64}
S.\ Campanato:
Propriet\`a di una famiglia di spazi funzioni,
\emph{Ann. Scuola Norm. Sup. Pisa}~\textbf{18} (1964),
137--160.

\bibitem{CheLR06}
Y.\ Chen, S.\ Levine and R.\ Rao:
Variable exponent, linear growth functionals in image restoration,
\textit{SIAM J.\ Appl.\ Math.}~\textbf{66} (2006), no.~4, 1383--1406.

\bibitem{C-UF13}
D.\ Cruz-Uribe and A.\ Fiorenza:
Variable Lebesgue Spaces, Birkh\"auser, Basel, 2013.



\bibitem{Dri15b}
D.\ Drihem:
Some characterizations of variable Besov-type spaces,
\emph{Ann. Funct. Anal.}~\textbf{6} (2015), no.~4, 255--288.

\bibitem{Dri15c}
D.\ Drihem:
Some properties of variable {B}esov-type spaces,
\emph{Funct. Approx. Comment. Math.}~\textbf{52} (2015), no.~2, 193--221.

\bibitem{DHHR11} L.\ Diening, P.\ Harjulehto, P.\ H\"{a}st\"{o} and M.\
R{\r{u}}{\v{z}}i{\v{c}}ka: Lebesgue and Sobolev Spaces with Variable
Exponents, Lecture Notes in Mathematics, vol. \textbf{2017},
Springer-Verlag, Berlin, 2011.


\bibitem{DieHR09}
L.\ Diening, P.\ H\"{a}st\"{o} and S.\ Roudenko: Function spaces of variable
smoothness and integrability, \textit{J.\ Funct.\
Anal.}~\textbf{256} (2009), no.~6, 1731--1768.


\bibitem{ElBar06}
A.\ El Baraka:
Function spaces of BMO and Campanato type,
\emph{Proceedings of the 2002 Fez Conference on Partial Differential Equations}, 109--115 (electronic), Electron. J. Differ. Equ.
Conf.~\textbf{9}, Southwest Texas State Univ., San Marcos, TX, 2002.

\bibitem{ElBar02}
A.\ El Baraka:
Littlewood-Paley characterization for Campanato spaces,
\emph{J. Funct. Spaces Appl.}~\textbf{6} (2006), 193--220.


%
%
%
\bibitem{Fan07}
X.-L.\ Fan:
Global $C^{1,\alpha}$ regularity for variable exponent elliptic
equations in divergence form,
\textit{J.\ Differential Equations}~\textbf{235} (2007), no.~2, 397--417.

%
%


\bibitem{FuXu11}
J.\ Fu and J.\ Xu:
Characterizations of Morrey type Besov and Triebel-Lizorkin spaces with variable exponents,
\textit{J. Math. Anal. Appl.}~\textbf{381} (2011), 280--298.



\bibitem{GulHS10}
V.\ Guliyev, J.\ Hasanov and S.\ Samko:
Boundedness of the maximal, potential and singular operators in the generalized variable exponent Morrey spaces,
\textit{Math.\ Scand.}~\textbf{107} (2010), 285--304.

\bibitem{GulS13}
V.\ Guliyev and S.\ Samko:
Maximal, potential and singular operators in the generalized variable exponent Morrey spaces on unbounded sets,
\textit{J.\ Math.\ Sciences}~\textbf{193} (2013), 228--248.




\bibitem{Has09}
P.\ H\"ast\"o:
Local-to-global results in variable exponent spaces,
\emph{Math. Res. Lett.}~\textbf{16} (2009), 263--278.

\bibitem{HedNet07}
L.I.\ Hedberg and Y.V.\ Netrusov:
An axiomatic approach to function spaces, spectral synthesis, and Luzin approximation,
\emph{Mem. Amer. Math. Soc.}~\textbf{188} (2007), no. 882.

%

%


%

%
%
%

\bibitem{Kem08}
H.\ Kempka:
\emph{Generalized 2-microlocal Besov spaces},
Ph.D. thesis, University of Jena, Germany, 2008.
%
\bibitem{Kem09}
H.\ Kempka:
2-microlocal Besov and Triebel--Lizorkin spaces of variable integrability,
\textit{Rev.\ Mat.\ Complut.}~\textbf{22} (2009), no.~1, 227--251.
%
\bibitem{Kem10}
H.\ Kempka:
Atomic, molecular and wavelet decomposition of 2-microlocal Besov and Triebel--Lizorkin spaces with variable integrability,
\textit{Funct.\ Approx.\ Comment.\ Math.}~\textbf{43} (2010), 171--208.

\bibitem{KemV12}
H.\ Kempka and J.\ Vyb\'\i{}ral:
Spaces of variable smoothness and integrability: Characterizations by local means and ball means of differences,
\textit{J. \ Fourier \ Anal. \ Appl.}~\textbf{18} (2012), no.~4, 852--891.

\bibitem{KemV13}
H.\ Kempka and J.\ Vyb\'\i{}ral:
A note on the spaces of variable integrability and summability of Almeida and H\"{a}st\"{o},
\textit{Proc.\ Amer.\ Math.\ Soc.}~\textbf{141}(9) (2013), 3207--3212.

\bibitem{KMRS16b}
V.\ Kokilashvili, A.\ Meskhi, H.\ Rafeiro and S.\ Samko:
\emph{Integral operators in non-standard function spaces. Volume 2: Variable exponent H\"older, Morrey-Campanato and Grand spaces.}
Operator Theory: Advances and Applications 249, Birkh\"auser-Springer, Basel, 2016.

\bibitem{KokM08}
V.\ Kokilashvili and A.\ Meskhi:
Boundedness of maximal and singular operators in Morrey spaces with variable exponent,
\textit{Armem.\ J.\ Math.}~\textbf{1} (2008), 18--28.

\bibitem{KokM10}
V.\ Kokilashvili and A.\ Meskhi:
Maximal functions and potentials in variable exponent Morrey spaces with nondoubling measures,
\textit{Complex\ Var.\ Elliptic\ Equ.}~\textbf{55} (2010), 923--936.



\bibitem{KY94}
H.\ Kozono and M.\ Yamazaki:
Semilinear heat equations and the Navier-Stokes equation with
distributions in new function spaces as initial data,
\emph{Comm. Partial Differential Equations}~\textbf{19} (1994), 959--1014.


\bibitem{KR91}
O.\ Kov\'{a}{\v c}ik and J.\ R\'{a}kosn\'{\i}k:
On spaces $L^{p(x)}$ and $W^{1,p(x)}$,
\emph{Czechoslovak Math.\ J.}~\textbf{41(116)} (1991), 592--618.
%

\bibitem{L-R07}
P.G.\ Lemari\'e-Rieusset:
The Navier-Stokes equations in the critical Morrey-Campanato space,
\emph{Rev. Mat. Iberoamericana} \textbf{23} (2007), 897--930.

\bibitem{L-R12}
P.G.\ Lemari\'e-Rieusset:
The role of Morrey spaces in the study of Navier-Stokes and Euler equations,
\emph{Eurasian Math. J.} \textbf{3} (2012), 62--93.

\bibitem{L-R16}
P.G.\ Lemari\'e-Rieusset:
\emph{The Navier-Stokes problem in the 21st century},
CRC Press, Boca Raton, FL, 2016.

%
%

%





\bibitem{LiLP10}
F.\ Li, Z.\ Li and L.\ Pi:
{Variable exponent functionals in image restoration},
\textit{Appl. Math. Comput.}~\textbf{216} (2010), no.\ 3, 870--882.


\bibitem{LSUYY13}
Y. Liang, Y. Sawano, T. Ullrich, D. Yang and W. Yuan:
A new framework for generalized Besov-type and Triebel-Lizorkin-type spaces
\textit{Diss. Math.}~\textbf{489} (2013), 114 pp.

\bibitem{LZhang13}
S.\ Liang and J.\ Zhang:
Infinitely many small solutions for the p(x)-Laplacian operator with nonlinear boundary conditions,
\textit{Ann. Mat. Pura Appl.}~\textbf{192} (2013), 1--16.

\bibitem{Mazz03}
A.\ Mazzucato:
Besov-Morrey spaces: Function space theory and applications to nonlinear PDE,
\emph{Trans. Amer. Math. Soc.}~\textbf{355} (2003), 1297--1364.



\bibitem{MizShi08}
Y.\ Mizuta and T.\ Shimomura:
Sobolev embeddings for Riesz potentials of functions in Morrey spaces of variable exponent,
\emph{J. Math. Soc. Japan}~\textbf{60} (2008), 583--602.

\bibitem{Mor38}
C.B.\ Morrey:
On the solutions of quasi-linear elliptic partial differential equations,
\emph{Trans. Amer. Math. Soc.}~\textbf{43} (1938), 126--166.

%
\bibitem{MouNS13}
S.\ D.\ Moura, J.\ S.\ Neves and C.\ Schneider:
On trace spaces of $2$-microlocal Besov spaces with variable integrability,
\textit{Math. Nachr.}~\textbf{286} (2013), 1240--1254.


\bibitem{Mus83}
J. Musielak: \textit{Orlicz Spaces and Modular spaces},
Lecture Notes in Math., vol. 1034, Springer-Verlag, Berlin, 1983.


\bibitem{Ohno08}
T.\ Ohno:
Continuity properties for logarithmic potentials of functions in Morrey spaces of variable exponent,
\textit{Hiroshima \ Math.\ J.}~\textbf{38} (2008), n.~3, 363--383.

%
%
%
%



\bibitem{Ros13}
M.\ Rosenthal:
Local means, wavelet bases and wavelet isomorphisms in Besov-Morrey and Triebel-Lizorkin-Morrey spaces,
\emph{Math. Nachr.}~\textbf{286} (2013), 59--87.

%

\bibitem{Ruz00}
M.\ R\r u\v zi\v cka:
\textit{Electrorheological fluids: modeling and mathematical theory},
Lecture Notes in Mathematics, \textbf{1748}, Springer-Verlag, Berlin, 2000.
%

%
%
%


\bibitem{Saw09}
Y.\ Sawano:
A Note on Besov--Morrey Spaces and Triebel--Lizorkin--Morrey Spaces,
\emph{Acta Math. Sin. (English Series)} \textbf{25} (2009), 1223--1242.

\bibitem{Sick12}
W.\ Sickel:
Smoothness spaces related to Morrey spaces - a survey, I,
\emph{Eurasian Math. J.}~\textbf{3} (2012), 110--149.

\bibitem{Sick13}
W.\ Sickel:
Smoothness spaces related to Morrey spaces - a survey, II,
\emph{Eurasian Math. J.}~\textbf{4} (2013), 82--124.



\bibitem{Tri13}
H. Triebel: \textit{Local function spaces, Heat and Navier-Stokes equations},
EMS Tracts Math., 20, European Mathematical Society, Z\"{u}rich, 2013.

%
%





%
%
%
%
%
%


%


%

\bibitem{WYYZ18}
S.\ Wu, D.\ Yang, W.\ Yuan and C.\ Zhuo:
Variable $2$-Microlocal Besov-Triebel-Lizorkin-type Spaces,
\emph{Acta Math. Sin. (English Series)}~\textbf{34} (2018), 699--748.

\bibitem{YZY15a}
D.\ Yang, C.\ Zhuo and W.\ Yuan:
Besov-type spaces with variable smoothness and integrability,
\emph{J. Funct. Anal.}~\textbf{269} (2015), no.~6, 1840--1898.

\bibitem{YZickT10}
W.\ Yuan, W.\ Sickel, D.\ Yang:
Morrey and Campanato meet Besov, Lizorkin and Triebel,
\textit{in Lecture Notes in Mathematics, vol. 2005, Springer-Verlag, Berlin, 2010}.


\end{thebibliography}
\end{document}